\newcounter{pcounter}
\newcommand{\del}{\partial}
\newcommand{\R}{\mathcal{R}}
\newcommand{\E}{\mathcal{E}}
\newcommand{\ZZ}{\Bbb Z}
\newcommand{\RR}{\Bbb R}
\newcommand{\NN}{\Bbb N}
\newcommand{\QQ}{\Bbb Q}
\newcommand{\CC}{\Bbb C}
\newcommand{\ip}[1]{\langle #1 \rangle}
\newcommand{\widetidle}{\widetilde}
\newcommand{\varespilon}{\varepsilon}
\newcommand{\actson}{\curvearrowright}
\newtheorem{conj}{Conjecture}
\newtheorem{theorem}{Theorem}
\newtheorem{definition}[theorem]{Definition}
\newtheorem{proposition}[theorem]{Proposition}
\newtheorem{cor}[theorem]{Corollary}
\newtheorem{lemma}[theorem]{Lemma}
\newcommand{\FF}{\Bbb F}
\DeclareMathOperator{\vdim}{vdim}
\DeclareMathOperator{\Vect}{Vect}
\newcommand{\mult}{\textnormal{mult}}
\DeclareMathOperator{\sgn}{sgn}
\DeclareMathOperator{\Hamm}{Hamm}
\DeclareMathOperator{\vol}{vol}
\DeclareMathOperator{\Isom}{Isom}
\DeclareMathOperator{\Sym}{Sym}
\DeclareMathOperator{\Span}{Span}
\DeclareMathOperator{\id}{Id}
\DeclareMathOperator{\Ball}{Ball}
\DeclareMathOperator{\tr}{tr}
\DeclareMathOperator{\dom}{dom}
\DeclareMathOperator{\ran}{ran}
\DeclareMathOperator{\Proj}{Proj}
\DeclareMathOperator{\Hom}{Hom}
\DeclareMathOperator{\Tr}{Tr}
\DeclareMathOperator{\wk}{wk}
\numberwithin{theorem}{section}
\begin{document}
\title[$l^{p}$ Dimension for Banach Space Representations of Sofic Groups II] {An $l^{p}$-Version of Von Neumann Dimension for Banach Space Representations of Sofic Groups II }        
\author{Ben Hayes}
\address{UCLA Math Sciences Building\\
         Los Angeles,CA 90095-1555}
\email{brh6@ucla.edu}
\date{\today}
\maketitle
\tableofcontents

\section{Introduction}

	This paper is intended as a follow up paper to \cite{Me}. Let us first recall some necessary notation and definitions.
\begin{definition} \emph{Let $V$ be a Banach space. We shall use $\Isom(V)$ for the group of all linear, surjective, isometric maps from $V$ to itself}.\end{definition}

\begin{definition} \emph{ For $\sigma,\tau\in S_{n}$ (here $S_{n}$ is the group of self-bijections of $\{1,\dots,n\}$) we define the} Hamming distance \emph{by}\end{definition}
\[d_{\Hamm}(\sigma,\tau)=\frac{1}{n}|\{j:\sigma(j)\ne \tau(j)\}|.\]
The Hamming distance can also be seen as the probability that $\sigma\ne \tau,$ using the uniform probability measure on $\{1,\dots,n\}.$

\begin{definition} \emph{Let $\Gamma$ be a countable discrete group. A} sofic approximation \emph{ of $\Gamma$ is a sequence $\sigma_{i}\colon \Gamma\to S_{d_{i}}$  of functions, not assumed to be homomorphisms, with $\sigma_{i}(e)=1$ such that}
\[d_{\Hamm}(\sigma_{i}(gh),\sigma_{i}(g)\sigma_{i}(h))\to 0,\mbox{\emph{ for all $g,h\in \Gamma$,}}\]
\[d_{\Hamm}(\sigma_{i}(g),\sigma_{i}(h))\to 1,\mbox{\emph{ for all $g\ne h$ in $\Gamma.$}}\]
\emph{We say that $\Gamma$ is \emph{sofic}, if it has a sofic approximation.}
\end{definition}
\begin{definition} \emph{On $M_{n}(\CC)$ we shall use $\tr=\frac{1}{n}\Tr,$ where $\Tr$ is the usual trace. We shall use $\ip{A,B}=\tr(B^{*}A)$ and $\|A\|_{p}=\tr((A^{*}A)^{p/2})^{1/p}.$ We shall use $\|A\|_{\infty}$ for the operator norm of $A.$ }\end{definition}

	In fact, if $M$ is a von Neumann algebra with a faithful normal tracial state $\tau,$ we will use
\[\|x\|_{p}=\tau((x^{*}x)^{p/2})^{1/p}\]
and $\|x\|_{\infty}$ for the operator norm of $x\in M.$

\begin{definition}\emph{ Let $\Gamma$ be a countable discrete group. An} embedding sequence \emph{of $\Gamma$ is a sequence $\sigma_{i}\colon \Gamma\to U(d_{i}),$ (here $U(n)$ is the unitary group of $\CC^{n}$) satsifying $\sigma_{i}(e)=\id$ and}
\[\|\sigma_{i}(gh)-\sigma_{i}(g)\sigma_{i}(h)\|_{2}\to 0\mbox{\emph{ for all $g,h\in \Gamma$}}\]
\[\ip{\sigma_{i}(g),\sigma_{i}(h)}\to 0\mbox{\emph{ for all $g\ne h\in\Gamma$}}.\]\end{definition}
We say that $\Gamma$ is\emph{ $\R^{\omega}$-embeddable } if it has an embedding sequence. The terminology comes from the Connes' Embedding Problem, as one can show that $\Gamma$ is $\R^{\omega}$-embeddable according to our definition if and only if the group von Neumann algebra of $\Gamma$ embeds into an ultrapower of the hyperfinite II$_{1}$ factor $\R$ (see \cite{Radul} Proposition 2.5 and Definition 2.6). It is a simple exercise to show that every sofic group is $\R^{\omega}$-embeddable. The class of sofic groups include amenable groups, residually finite groups, and is closed under free products with amalgamation over amenable groups, increasing unions, and taking subgroups (see \cite{ESZ1} for proofs of these facts). In particular, all linear groups are sofic.

 In \cite{Me} if we are given a countable discrete group $\Gamma$ and $\Sigma=(\sigma_{i}\colon \Gamma\to \Isom(V_{i}))$ with $\dim(V_{i})<\infty,$ then to every uniformly bounded action of $\Gamma$ on a Banach space $V,$ we defined a number
\[\dim_{\Sigma}(V,\Gamma)\in [0,\infty].\]
The definition will be given later in this section. Suppose $\Gamma$ is a sofic group, and  $V_{i}=l^{p}(d_{i}), $ and $\Sigma=(\sigma_{i}\colon\Gamma\to S_{d_{i}})$ is as sofic approximation. Let $\widetilde{\Sigma}=(\widetilde{\sigma}_{i}\colon\Gamma\to \Isom(l^{p}(d_{i})))$ be the sequence of maps defined by composing $\sigma_{i}$ with the map $S_{d_{i}}\to \Isom(l^{p}(d_{i}))$ given by the natural action of $S_{d_{i}}$ on $l^{p}(d_{i}).$  In this case we will use
\[\dim_{\Sigma,l^{p}}(V,\Gamma)\]
instead of
\[\dim_{\widetidle{\Sigma}}(V,\Gamma).\]
Similarly if $\Gamma$ is $\R^{\omega}$-embeddable and $\Sigma=(\sigma_{i}\colon\Gamma\to U(d_{i}))$ is an embedding sequence, we let
\[\dim_{\Sigma,S^{p},\textnormal{mult}}(V,\Gamma),\]
\[\dim_{\Sigma,S^{p},\textnormal{conj}}(V,\Gamma),\]
be the dimensions coming from the left multiplication and conjugation actions of $U(n)$ on $S^{p}(n),$ respectively.

	In \cite{Me}, we proved the following properties of this dimension function
\begin{list}{Property \arabic{pcounter}:~}{\usecounter{pcounter}}
\item $\dim_{\Sigma}(W,\Gamma)\leq \dim_{\Sigma}(V,\Gamma)$ if there is a equivariant bounded linear map $W\to V$ with dense image.
\item $\dim_{\Sigma}(W,\Gamma)\leq \dim_{\Sigma}(V,\Gamma)+\dim_{\Sigma}(V/W,\Gamma),$ if $W\subseteq V$ is a closed $\Gamma$-invariant subspace.
\item $\dim_{\Sigma,l^{p}}(W\oplus V,\Gamma)\geq \dim_{\Sigma,l^{p}}(W,\Gamma)+\underline{\dim}_{\Sigma,l^{p}}(V,\Gamma)$ for $2\leq p<\infty,$ where $\underline{\dim}$ is a ``lower dimension," and is also an invariant,
\item $\dim_{\Sigma,l^{p}}(l^{p}(\Gamma,V),\Gamma)=\underline{\dim}_{\Sigma,l^{p}}(l^{p}(\Gamma,V))=\dim(V)$ for $1\leq p\leq 2,$
\item $\underline{\dim}_{\Sigma,l^{p}}(W,\Gamma)\geq \dim_{L(\Gamma)}(\overline{W}^{\|\cdot\|_{2}})$ if $1\leq p\leq 2,$ $W\subseteq l^{p}(\NN,l^{p}(\Gamma)),$
\item  $\underline{\dim}_{\Sigma,S^{p},conj}(W,\Gamma)\geq \dim_{L(\Gamma)}(\overline{W}^{\|\cdot\|_{2}})$ if $1\leq p\leq 2,$ $W\subseteq l^{p}(\NN,l^{p}(\Gamma)),$
\item $\underline{\dim}_{\Sigma,l^{2}}(H,\Gamma)=\dim_{\Sigma,l^{2}}(H,\Gamma)=\dim_{L(\Gamma)}H$ if $H\subseteq l^{2}(\NN,l^{2}(\Gamma)).$

\end{list}

	Thus $\dim_{\Sigma,l^{p}}$ can be seen as an extension of the von Neumann dimension of a $\Gamma$-invariant subspace of $l^{2}(\NN,\Gamma)$ as defined by Murray and von Neumann. The above shows that $\dim_{\Sigma,l^{p}}$ has many of the properties that the usual dimension in linear algebra and von Neumann dimension have and thus it makes sense to think of $\dim_{\Sigma,l^{p}}$ as a version of von Neumann dimension. We mention that in \cite{MP} Monod-Petersen show that if $2<p<\infty,$ then any isomorphism invariant associated to $\Gamma$-invariant subspaces of $l^{p}(\Gamma)$ cannot satisfy all the  properties that von Neumann dimension satisfies. In particular, they show that if $\Gamma$ contains an infinite elementary amenable subgroup and $2<p<\infty,$ then there exists closed $\Gamma$-invariant linear subspaces $E_{n}$ and $F\ne \{0\}$ of $l^{p}(\Gamma)$ with $E_{n}\cap F=\{0\}$ for all $n,$ but
\[l^{p}(\Gamma)=\overline{\bigcup_{n=1}^{\infty}E_{n}}.\]
This is impossible for $p=2,$ because
\[\dim_{L(\Gamma)}(\mathcal{H}_{n})\to \dim_{L(\Gamma)}\left(\overline{\bigcup_{n=1}^{\infty}\mathcal{H}_{n}}\right),\]
whenever $\mathcal{H}_{n}$ is an increasing sequence of closed linear $\Gamma$-invariant subspaces of $l^{2}(\Gamma)$.

	Let us mention how the results of Monod-Petersen show that there must be paradoxical properties of our dimension function. Let $V_{n},W$ be the polars of $E_{n},F$ constructed by Monod-Petersen. We see that if  $1<p<2,$ then we have a $\Gamma$-invariant closed subspace $W\subseteq l^{p}(\Gamma)$ with $W\ne l^{p}(\Gamma)$ and  decreasing closed $\Gamma$-invariant subspaces $V_{n}\subseteq l^{p}(\Gamma)$ such that $\overline{W+V_{n}}^{\|\cdot\|_{p}}=l^{p}(\Gamma)$ and $\bigcap_{n=1}^{\infty}V_{n}=\{0\}.$ We thus have a $\Gamma$-equivariant map with dense image
\[V_{n}\oplus W\to l^{p}(\Gamma),\]
so
\[1\leq \dim_{\Sigma,l^{p}}(V_{n},\Gamma)+\underline{\dim}_{\Sigma,l^{p}}(W,\Gamma).\]
Thus one of two things occurs. Either
\[\liminf_{n\to \infty}\dim_{\Sigma,l^{p}}(V_{n},\Gamma)>0\]
or
\[\underline{\dim}_{\Sigma,l^{p}}(W,\Gamma)\geq 1.\]
Therefore, one of the two properties
\[\dim_{\Sigma,l^{p}}(W,\Gamma)< \dim_{\Sigma,l^{p}}(V,\Gamma)\mbox{ if $W$ is a proper closed linear subspace of a Banach space $V$},\]
\[\dim_{\Sigma,l^{p}}(V_{n},\Gamma)\to 0,\textnormal{ if $V_{n}\supseteq V_{n+1}$ are  Banach spaces with $\bigcap_{n=1}^{\infty}V_{n}=\{0\}$},\]
must fail for $l^{p}$-dimension. Each of these properties is true for von Neumann dimension, and so we must have some paradoxical properties (i.e. contrary to what one would intuitively believe should hold for a dimension function) for $l^{p}$ dimension when $p\ne 2.$

\begin{definition}\label{D:Hom} \emph{Let $\Gamma$ be a countable discrete group and $\Sigma=(\sigma_{i}\colon \Gamma\to \Isom(V_{i})).$ Let $\Gamma$ have a uniformly bounded action on a Banach space $V$ and let $S=(x_{j})_{j=1}^{\infty}$ be a bounded sequence in $V.$ For $F\subseteq \Gamma$ finite and $m\in \NN,$ let 
\[V_{F,m}=\Span\{g_{1}g_{2}\dots g_{k}x_{j}:g_{1},\dots,g_{k}\in F,1\leq j,k\leq m\}.\]
For $M,\delta>0,$ we let $\Hom_{\Gamma}(S,F,m,\delta,\sigma_{i})_{M}$ consist of all bounded linear maps $T\colon V_{F,m}\to V_{i}$ such that $\|T\|\leq M$ and}
\[\|T(g_{1}\cdots g_{k}x_{j})-\sigma_{i}(g_{1})\cdots\sigma_{i}(g_{k})T(x_{j})\|<\delta\mbox{ \emph{for all $g_{1},\dots,g_{k}\in F,1\leq j,k\leq m$}}.\]
\emph{We shall typically denote $\Hom_{\Gamma}(S,F,m,\delta,\sigma_{i})_{1}$ by $\Hom_{\Gamma}(S,F,m,\delta,\sigma_{i}).$ If $\Sigma$ is a sofic approximation, we use $\Hom_{\Gamma,p}(S,F,m,\delta,\sigma_{i})$ for the space of maps above using the permutation action of $S_{d_{i}}$ on $l^{p}(d_{i}).$}\end{definition}

\begin{definition}\emph{ Let $V$ be a vector space with pseudonorm $\rho.$ For $A\subseteq V$ and $\varepsilon>0$ we say that a linear subspace $W\subseteq V$} $\varepsilon$-contains \emph{$A,$  written $A\subseteq_{\varepsilon}W,$ if for all $x\in A,$ there is a $w\in W$ such that $\rho(x-w)<\varepsilon.$ We set $d_{\varepsilon}(A,\rho)$ to be the smallest dimension of a linear subspace which $\varepsilon$-contains $A.$ }\end{definition}
\begin{definition}\emph{A} product norm\emph{ on $l^{\infty}(\NN)$ is a norm $\rho$ with $\rho(f)\leq \rho(g)$ if $|f|\leq |g|,$ and such that $\rho$ induces the topology of pointwise convergence on $\{f\in l^{\infty}(\NN):\|f\|_{\infty}\leq 1\}.$ If $\rho$ is a product norm, and $V$ is a Banach space we define $\rho_{V}$ on $l^{\infty}(\NN,V)$ by $\rho_{V}(f)=\rho((\|f(n)\|)_{n=1}^{\infty}).$} \end{definition}

\begin{definition}\emph{Let $\Gamma,S=(x_{j})_{j=1}^{\infty},\Sigma,V$ be as Definition \ref{D:Hom}. For $F\subseteq \Gamma$ finite and $m\in \NN$ let $\alpha_{S}\colon B(V_{F,m},V_{i})\to l^{\infty}(\NN,V_{i})$ be given by $\alpha_{S}(T)(n)=T(x_{j}),$ if $1\leq n\leq m$ and $\alpha_{S}(T)(n)=0$ for $n>m.$  We define}
\[\dim_{\Sigma}(S,F,m,\delta,\varepsilon,\rho)=\limsup_{i\to \infty}\frac{1}{\dim(V_{i})}d_{\varepsilon}(\alpha_{S}(\Hom_{\Gamma}(S,F,m,\delta,\sigma_{i})),\rho_{V_{i}}).\]
\[\dim_{\Sigma}(S,\varepsilon,\rho)=\limsup_{(F,m,\delta)}d_{\varepsilon}(\alpha_{S}(\Hom_{\Gamma}(S,F,m,\delta,\sigma_{i})),\rho_{V_{i}}).\]
\[\dim_{\Sigma}(S,\rho)=\sup_{\varepsilon>0}\dim_{\Sigma}(S,\varepsilon,\rho).\]
\end{definition}
	Here the triples $(F,m,\delta)$ are ordered by $(F,m,\delta)\leq (F',m',\delta')$ if $F\subseteq F',m\leq m',\delta'<\delta.$ In \cite{Me} it is shown that
\[\dim_{\Sigma}(S,\rho)=\dim_{\Sigma}(S',\rho')\]
if $\overline{\Span(\Gamma S)}=\overline{\Span(\Gamma S')},$ and $\rho,\rho'$ are product norms. Because of this we use $\dim_{\Sigma}(V,\Gamma)$ for $\dim_{\Sigma}(S,\rho)$ if $\overline{\Span(\Gamma S)}=V$ and $\rho$ is a product norm.  Also in \cite{Me} we showed that
\[\dim_{\Sigma}(V,\Gamma)=\sup_{\varepsilon>0}\liminf_{(F,m,\delta)}\limsup_{i\to \infty}\frac{1}{\dim(V_{i})}d_{\varepsilon}(\alpha_{S}(\Hom_{\Gamma}(S,F,m,\delta,\sigma_{i})),\rho_{V_{i}}).\]
It is a simple exercise to show that $\Hom_{\Gamma}(S,F,m,\delta,\sigma_{i})$ may be replaced by $\Hom_{\Gamma}(S,F,m,\delta,\sigma_{i})_{M}$ for any $M>0.$
We let $\underline{\dim}_{\Sigma}(V,\Gamma)$ be the number obtained by replacing the first limit supremum with a limit infimum (again we showed in \cite{Me} that this depends only on $\overline{\Span(\Gamma S)}$). Similar to above we showed in \cite{Me} that
\[\underline{\dim}_{\Sigma}(V,\Gamma)=\sup_{\varepsilon>0}\liminf_{(F,m,\delta)}\liminf_{i\to \infty}\frac{1}{\dim(V_{i})}d_{\varepsilon}(\alpha_{S}(\Hom_{\Gamma}(S,F,m,\delta,\sigma_{i}),\rho_{V_{i}}).\]

	These definitions may seem quite technical and bizarre, but they are really inspired by ideas of Bowen \cite{Bow}, Kerr and Li \cite{KLi}, Gournay \cite{Gor} and Voiculescu \cite{Voi}. The main point is that one should view the usual von Neumann dimension as a type of dynamical entropy thinking of the action of $\Gamma$ on $l^{2}(\Gamma)$ as an analogue of a Bernoulli shift action of $\Gamma.$ Bowen in \cite{Bow} and Kerr and Li in \cite{KLi} give a microstates version of dynamical entropy for sofic groups. Similar to Kerr and Li, we consider ``almost structure-preserving maps" (in this case almost equivariant maps), and measure the growth rate of the size of the space of such maps. Here it makes sense to consider the linear growth rate, since $\varepsilon$-dimension can grow at most linearly. In this paper we prove some of the conjectures stated in \cite{Me}. Namely, we show the following new properties of $l^{p}$-dimension:

\begin{list}{Property \arabic{pcounter}:~}{\usecounter{pcounter}}
\item $\underline{\dim}_{\Sigma,l^{p}}(H_{1}^{l^{p}}(\FF_{n}),\FF_{n})=\dim_{\Sigma,l^{p}}(H_{1}^{l^{p}}(\FF_{n}),\FF_{n})=n-1.$
\item $\dim_{\Sigma,l^{p}}(H_{l^{p}}^{1}(\FF_{n}),\FF_{n})=\dim_{\Sigma,l^{p}}(H_{l^{p}}^{1}(\FF_{n}),\FF_{n})=n-1.$

\item $\dim_{\Sigma,S^{p},\textnormal{mult}}(\left(\bigoplus_{j=1}^{n}L^{p}(L(\Gamma)q_{j},\tau)\right),\Gamma)=\sum_{j=1}^{n}\tau(q_{j}),$ for $1\leq p<\infty,$ where $q_{1},\dots,q_{n}$ are projections in $L(\Gamma)$ and $\tau$ is the group trace.
\item $\dim_{\Sigma,l^{p}}(V,\Gamma)=0$ if $V$ is finite-dimensional and $\Gamma$ is infinite.

\end{list}

	Here $H^{1}_{l^{p}},H_{1}^{l^{p}}$ are $l^{p}$-homology and $l^{p}$-cohomology spaces, and $\FF_{n}$ is the free group on $n$ letters. Our approach to proving the last property is to consider a free, ergodic, probability measure-preserving action of $\Gamma$ such that the associated equivalence relation $\R_{\Gamma}$ is sofic ( the definition of what it means for an equivalence relation to be sofic will be given in Section \ref{S:trivial} and the Bernoulli action of a sofic group will be an example). Because $\R_{\Gamma}$ contains an amenable equivalence relation we can try to adapt the proof of the last property in the case $\Gamma=\ZZ.$ This suggests  exploring $l^{p}$-dimension for representations of equivalence relations, which is part of ongoing research (see \cite{Me2}). We will also give an equivalent approach to $l^{p}$-dimension defined by using vectors instead of almost equivariant operators.

\section{Triviality In The Case of Finite-Dimensional Representations}\label{S:trivial}

The main goal of this section is to prove the following.

\begin{theorem}\label{T:TrivialFinite} Let $\Gamma$ be a infinite sofic group, and $\Sigma$ a sofic approximation of $\Gamma.$  Then for every $1\leq p\leq \infty,$ and for uniformly bounded representation of $\Gamma$ on a finite-dimensional Banach space $V,$
\[\dim_{\Sigma,l^{p}}(V,\Gamma)=0.\]
\end{theorem}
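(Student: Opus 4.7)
Since the $\Gamma$-action on the finite-dimensional space $X$ is uniformly bounded, I would first renorm $X$ by $\|x\|' := \sup_{g \in \Gamma}\|gx\|$ so that the action becomes isometric; the image of $\Gamma$ in $\Isom(X)$ then lies in the compact group $K := \overline{\Gamma|_X}$. Fix a basis $S = (x_1,\dots,x_d)$ of $X$, so $X_{F,m} = X$ for all $F \ni e$ and $m \geq d$. Because $\Gamma$ is infinite and $K$ is compact, pigeonhole on any finite open cover of $K$ yields, for every $\delta>0$ and every $N \in \NN$, distinct $h_1,\dots,h_N \in \Gamma\setminus\{e\}$ with $\|h_k x - x\|' \leq \delta\|x\|'$ for all $k$ and all $x \in X$.

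For any $T \in \Hom_{\Gamma,p}(S,F,m,\delta',\sigma_i)$ with $F \supseteq \{h_1,\dots,h_N\}$, the triangle inequality applied to the defining condition gives
\[\|\sigma_i(h_k)T(x_j) - T(x_j)\|_p \leq \|\sigma_i(h_k)T(x_j) - T(h_k x_j)\|_p + \|T(h_k x_j - x_j)\|_p < \delta' + \delta\|x_j\|',\]
so every $T(x_j)$ is an approximate common fixed vector of $\sigma_i(h_1),\dots,\sigma_i(h_N)$. The heart of the proof is then to bound, for suitable $N$ and $\eta$, the $\varepsilon$-covering dimension of
\[V_i := \{v \in l^p(d_i) : \|v\|_p \leq C,\ \|\sigma_i(h_k)v - v\|_p < \eta \text{ for all } k\}\]
by $\varepsilon d_i$. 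The exact joint fixed subspace $W_i$ is spanned by indicator functions of orbits of $\langle \sigma_i(h_1),\dots,\sigma_i(h_N)\rangle$ on $\{1,\dots,d_i\}$; using that $d_{\Hamm}(\sigma_i(g),\id) \to 1$ for each fixed $g \neq e$ together with near-multiplicativity of $\sigma_i$, I would take $h_k = h^k$ for an $h \in \Gamma$ of infinite order acting close to the identity on $X$ (or, if no such $h$ exists, draw the $h_k$ from an increasingly large finite subgroup of $\Gamma$ acting close to the identity), forcing the density of short $\sigma_i(h)$-cycles (respectively, small orbits) to vanish as $i \to \infty$, so $\dim W_i \leq (\varepsilon/2)d_i$ for $N$ and $i$ large enough.

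The main obstacle is showing that $V_i$ is $\varepsilon$-close to $W_i$ in $l^p$-norm. For $p = 2$ this is immediate by orthogonally projecting onto $W_i$ and using a spectral-gap estimate on $\sum_k (I - \sigma_i(h_k))$; for general $p \in [1,\infty]$ the analogous averaging along orbits is only an $l^p$-contraction, and on a cycle of length $l$ the ratio $\|v - Pv\|_p / \|\sigma v - v\|_p$ can grow linearly in $l$. Bridging this for general $p$ is exactly where the approach advertised in the introduction enters: choose a free, ergodic, probability-measure-preserving $\Gamma$-action whose orbit equivalence relation $\R_\Gamma$ is sofic and contains a hyperfinite (amenable) subequivalence relation by Connes--Feldman--Weiss, and transport the known $\Gamma = \ZZ$ argument through Rokhlin/Ornstein--Weiss towers. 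Once the $V_i$-versus-$W_i$ bound is in hand, combining it coordinate-wise over $j=1,\dots,m$ produces an $\varepsilon$-cover of $\alpha_S(\Hom_{\Gamma,p}(S,F,m,\delta',\sigma_i))$ by a subspace of dimension $\leq m\varepsilon d_i$ in the product pseudonorm, and letting $\varepsilon \to 0$ yields $\dim_{\Sigma,l^p}(X,\Gamma) = 0$.
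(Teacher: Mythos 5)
Your reduction steps are sound and broadly parallel the paper's: renorming so the action is isometric, using compactness of $\overline{\Gamma|_X}$ to produce many group elements acting nearly trivially on $X$, and observing that the vectors $T(x_j)$ are then almost invariant under the corresponding permutations. You have also correctly located the crux: for general $p$, averaging an almost-invariant vector over the orbits of the permutations only controls $\|v-Pv\|_p$ up to a factor of the orbit length, so one cannot simply project onto the exact fixed space $W_i$. But at precisely this point the proposal stops being a proof. First, your case analysis --- take $h_k=h^k$ for $h$ of infinite order, or else draw the $h_k$ from increasingly large finite subgroups --- omits the one genuinely hard case: $\Gamma$ infinite, every element of finite order, and the sizes of finite subgroups uniformly bounded. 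There group elements alone cannot produce orbits that are simultaneously of small density and of controlled length, and the paper instead shows that in this case some infinite subgroup $\Lambda$ acts \emph{exactly} trivially on $X$ (a unitary within $\varepsilon$ of $1$ whose order is at most $M$ must equal $1$), reducing everything to the trivial representation of an infinite group on $\CC$. Second, your claim that the $p=2$ case is ``immediate by a spectral-gap estimate'' is false as stated: a single long cycle has no spectral gap, so a slowly varying function along it is almost invariant under $\sigma_i(h^k)$ for all small $k$ yet far from the fixed space, even in $l^2$.

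For the remaining case you defer to ``the approach advertised in the introduction'' via Connes--Feldman--Weiss and Rokhlin towers, but this is a gesture rather than an argument, and it is not the mechanism the paper uses. The paper extends $\Sigma$ to a sofic approximation of the orbit equivalence relation $\R$ of the Bernoulli action and invokes Zimmer's result that $\R$ contains, for \emph{every} $n$, a subrelation generated by a free action of $\ZZ/n\ZZ$. Using $\ZZ/n\ZZ$ rather than a hyperfinite $\ZZ$-subrelation is exactly what defuses your ``linear in the cycle length'' obstruction: the sofic image $\sigma_i(\alpha)$ of the generator is, up to vanishing density, a union of cycles of the \emph{fixed} length $n$, so averaging over these cycles lands in a space of dimension about $d_i/n$ while the $l^p$ error incurred is at most $n$ times the almost-invariance error $\eta$; one sends $\eta\to 0$ for fixed $n$ before sending $n\to\infty$. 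A separate lemma is also needed to show that almost-invariance under finitely many elements of $\Gamma$ forces almost-invariance under $\sigma_i(\alpha)$, since $\alpha$ lies only in the full group of $\R$ and is a measurable patchwork of group elements. Without these ingredients your key bound $d_\varepsilon(V_i,\|\cdot\|_p)\leq\varepsilon d_i$ is unproved, so the proposal has a genuine gap at its central step.
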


Here is the outline of the proof. We will begin by studying $l^{p}$-dimension for amenable groups, using the standard technique of averaging over F\o lner sequences. Using this averaging technique, we show that for finite $\Gamma$
\[\dim_{\Sigma,l^{p}}(V,\Gamma)=\frac{\dim_{\CC}V}{|\Gamma|}.\]
This easily proves the theorem when $\Gamma$ has finite subgroups of unbounded size. We then show that
\[\dim_{\Sigma,l^{p}}(V,\ZZ)=0,\]
if $V$ is finite-dimensional. Since dimension increases when we restrict  the action to a subgroup, we may assume that $\Gamma$ has no elements of infinite order, but that there is a uniform bound on the size of a finite subgroup of $\Gamma.$ A compactness argument will show that $\Gamma$ has an infinite subgroup which acts on $V$ trivially, so we only have to show that
\[\dim_{\Sigma,l^{p}}(\CC,\Gamma)=0\]
where $\Gamma$ acts trivially on $\CC.$ To prove this last statement, we will pass to a sofic equivalence relation induced by the group and use that the full group of such an equivalence relation contains $\ZZ/n\ZZ$ for every integer $n.$

	We first show that in the case of an action of an amenable group, we may assume that the maps we use to compute dimension are only approximately equivariant after cutting down by certain subsets. We formalize this as follows.

\begin{definition} \emph{Let $\Gamma$ be a sofic group with a uniformly bounded action on a Banach space $V.$ Let $\sigma_{i}\colon \Gamma\to S_{d_{i}}$ be a sofic approximation. Fix  a bounded sequence $S=(a_{j})_{j=1}^{\infty}$ in $V.$ Let $A_{i}\subseteq \{1,\dots,d_{i}\}.$ For $F\subseteq \Gamma$ finite, $m\in \NN,\delta>0,$ we let $\Hom_{\Gamma,l^{p},(A_{i})}(S,F,m,\delta,\sigma_{i})$  be the set of all linear maps $T\colon V_{F,m}\to l^{p}(d_{i})$ such that $\|T\|\leq 1,$ and for all $1\leq j,k\leq m$ and all $s_{1},\dots,s_{k}\in F$ we have}
\[\|T(s_{1}\cdots s_{k}a_{j})-\sigma_{i}(s_{1})\cdots \sigma_{i}(s_{k})T(a_{j})\|_{l^{p}(A_{i})}<\delta.\]
\emph{Set}
\[\dim_{\Sigma,l^{p}}(S,\Gamma,(A_{i}),\varepsilon,F,\delta,\rho)=\limsup_{i\to \infty}\frac{1}{d_{i}}d_{\varepsilon}(\alpha_{S}(\Hom_{\Gamma,l^{p},(A_{i})}(S,F,m,\delta,\sigma_{i})),\rho_{l^{p}(d_{i})}),\]
\[\dim_{\Sigma,l^{p}}(S,\Gamma,(A_{i}),\varepsilon,\rho)=\inf_{\substack{ F\subseteq \Gamma \textnormal{finite},\\ \delta>0}}\dim_{\Sigma,l^{p}}(S,\Gamma,(A_{i}),\varepsilon,F,\delta,\rho),\]
\[\dim_{\Sigma,l^{p}}(S,\Gamma,(A_{i}),\rho)=\sup_{\varepsilon>0}\dim_{\Sigma,l^{p}}(S,\Gamma,(A_{i}),\varepsilon,\rho),\]
\emph{where $\rho$ is any product norm.}
\end{definition}

\begin{proposition} Fix a product norm $\rho$ on $l^{\infty}(\NN).$ Let $\Gamma$ be a countable amenable group and $\Sigma=(\sigma_{i}\colon \Gamma\to S_{d_{i}})$ a sofic approximation. Let $A_{i}\subseteq \{1,\dots,d_{i}\}$ be such that
\[\frac{|A_{i}|}{d_{i}}\to 1.\]
Then for any uniformly bounded action of $\Gamma$ on a separable Banach space $V,$ for every generating sequence $S$ in $V,$ for every product norm $\rho$ and every $1\leq p<\infty$ we have
\[\dim_{\Sigma,l^{p}}(V,\Gamma)=\dim_{\Sigma,l^{p}}(S,\Gamma,(A_{i}),\rho).\]
\end{proposition}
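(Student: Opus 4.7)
The easy direction, $\dim_{\Sigma,l^p}(X,\Gamma) \leq \dim_{\Sigma,l^p}(S,\Gamma,(A_i),\rho)$, is immediate from the inclusion $\Hom_\Gamma(S,F,m,\delta,\sigma_i) \subseteq \Hom_{\Gamma,l^p,(A_i)}(S,F,m,\delta,\sigma_i)$, which holds because $\|\cdot\|_{l^p(A_i)} \leq \|\cdot\|_{l^p(d_i)}$; enlarging the set can only increase its $\varepsilon$-entropy.

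For the reverse direction the plan is F\o lner averaging, which is how amenability should enter. Fix $\varepsilon > 0$, $F \subseteq \Gamma$ finite, $m \in \NN$, $\delta > 0$. Choose a symmetric F\o lner set $K \subseteq \Gamma$ containing $F \cup \{e\}$ and $(\eta,F)$-invariant for $\eta$ small depending on $\varepsilon$. Given $T \in \Hom_{\Gamma,l^p,(A_i)}(S,F',m,\delta',\sigma_i)$ with $F' \supseteq K \cup K^{-1}$ and $\delta'$ small, form the averaged map
$$\widetilde{T}(x) := \frac{1}{|K|}\sum_{g \in K} \sigma_i(g)\, P_{A_i}\, T(g^{-1}x),$$
where $P_{A_i}$ denotes restriction to coordinates in $A_i$. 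I would then verify: (a) $\widetilde{T}$ is $l^p(d_i)$-approximately equivariant for $h \in F$, by the standard F\o lner cancellation (after substituting $g \mapsto hg$, the difference $\widetilde{T}(hx) - \sigma_i(h)\widetilde{T}(x)$ becomes a sum over $K \triangle h^{-1}K$, which has cardinality $\leq \eta|K|$, each summand having $l^p$-norm $\leq \|x\|$); and (b) $\alpha_S(T) - \alpha_S(\widetilde{T})$ splits as (i) a piece with each coordinate of $l^p$-norm $\lesssim \delta'$ (arising from the $l^p(A_i)$-approximate equivariance of $T$ on $K \subseteq F'$, absorbed by the identity $\sigma_i(g) P_{A_i} \sigma_i(g^{-1}) = P_{\sigma_i(g)(A_i)}$) and hence of small $\rho_{V_i}$-norm, plus (ii) a piece $\frac{1}{|K|}\sum_g [I - P_{\sigma_i(g)(A_i)}]T(x_j)$ whose support lies in $\bigcup_{g \in K}\sigma_i(g)(A_i)^c$, a set of cardinality at most $|K|\cdot|A_i^c| = o(d_i)$.

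The contribution (ii) therefore lives in an $o(d_i)$-dimensional linear subspace of $l^\infty(\NN,l^p(d_i))$, so it costs only $o(d_i)$ in $\varepsilon$-entropy and vanishes after dividing by $d_i$. Combining this with the $l^p$-smallness of (i) and the sub-additivity of $d_\varepsilon$ under Minkowski sums yields
$$d_\varepsilon(\alpha_S(\Hom_{\Gamma,l^p,(A_i)}(S,F',m,\delta',\sigma_i)),\rho_{V_i}) \leq d_{\varepsilon/2}(\alpha_S(\Hom_\Gamma(S,F,m,\delta,\sigma_i)),\rho_{V_i}) + o(d_i),$$
which after normalization gives the reverse inequality. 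The main technical obstacle I expect is step (a): converting the sofic (Hamming) defect $\sigma_i(hg) - \sigma_i(h)\sigma_i(g)$ into $l^p(d_i)$-smallness when applied to vectors of bounded $l^p$-norm, since Hamming closeness of two permutations does not automatically yield $l^p$-closeness against an arbitrary vector. The averaging over $g \in K$ should smear these defects over a set of size $o(d_i)$, and combined with the $l^p(A_i)$-hypothesis on $T$ and the fact that $P_{A_i}T(g^{-1}x)$ is controlled on the bulk set $A_i$, this should produce the required $l^p(d_i)$-approximate equivariance, although carrying it out cleanly is where the amenability of $\Gamma$ and the condition $|A_i|/d_i \to 1$ must be used together.
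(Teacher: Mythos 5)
Your outline follows the same route as the paper: the trivial inclusion $\Hom_\Gamma\subseteq\Hom_{\Gamma,l^p,(A_i)}$ for one inequality, and for the other a F\o lner average of a cut-down map, with the error split into an $l^p$-small piece and a piece supported on $o(d_i)$ coordinates that costs only $o(d_i)$ in $\varepsilon$-entropy. The one point you correctly flag as unresolved --- step (a), turning the Hamming defect of $\sigma_i$ into honest $l^p(d_i)$-equivariance of the averaged map --- is a genuine gap, and it is closed in the paper by exactly the device you already deploy for $A_i$: before averaging, replace $T$ by $\chi_{B_i}T$, where $B_i$ is the set of coordinates on which all the relevant products $\sigma_i(s_1\cdots s_k s)$ and $\sigma_i(s_1)\cdots\sigma_i(s_k)\sigma_i(s)$ (for $s\in E$, $s_1,\dots,s_k\in F$, $k\leq m$) literally agree; soficity gives $|B_i|/d_i\to 1$. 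The missing elementary observation is that for two permutations $\pi,\rho$ of $\{1,\dots,d_i\}$ and a vector $\xi$ supported on $\{l:\pi(l)=\rho(l)\}$, one has $\pi\xi=\rho\xi$ \emph{exactly} --- no estimate of a permutation difference against an arbitrary $l^p$ vector is ever needed, because the cut forces exact agreement on the image of $\chi_{B_i}T$. After this cut, the averaged map $P_i^{(E)}(\chi_{B_i}T)$ is approximately equivariant in the full $l^p(d_i)$ norm with error controlled solely by the F\o lner ratio $|E\,\triangle\, s_k^{-1}\cdots s_1^{-1}E|/|E|$, while the discrepancy between $\chi_{B_i}T$ and $T$ lives on $B_i^c$, a set of $o(d_i)$ coordinates, so it is absorbed into the same entropy correction as your piece (ii) (the paper collects $A_i$, $B_i$ and their $\sigma_i(s)$-translates into one good set $C_i$ with $|C_i|/d_i\to 1$). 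Note also that your identity $\sigma_i(g)P_{A_i}\sigma_i(g^{-1})=P_{\sigma_i(g)(A_i)}$ is itself only approximate, since $\sigma_i(g^{-1})\neq\sigma_i(g)^{-1}$ in general; this too is repaired by the same $B_i$ cut. With that single addition your argument matches the paper's proof.
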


\begin{proof}Fix $S=(x_{j})_{j=1}^{\infty}$ a dynamically generating sequence for $V.$ As
\[\Hom_{\Gamma,l^{p}}(S,F,m,\delta,\sigma_{i})\subseteq \Hom_{\Gamma,l^{p},(A_{i})}(S,F,m,\delta,\sigma_{i})\]
for $m,i\in \NN,\delta>0$ and $F\subseteq \Gamma$ finite, we have
\[\dim_{\Sigma,l^{p}}(V,\Gamma)\leq \dim_{\Sigma,l^{p}}(S,\Gamma,(A_{i}),\rho).\]

	For the reverse inequality we first fix some notation. For $E, F$ finite subsets of $\Gamma$ containing the identity and $m\in \NN$ define
\[P_{i}^{(E)}\colon B(V_{E^{-1}F,m},l^{p}(d_{i}))\to B(V_{F,m},l^{p}(d_{i}))\]
by
\[P_{i}^{(E)}(T)=\frac{1}{|E|}\sum_{s\in E}\sigma_{i}(s)\circ T\circ s^{-1}.\]
Then $\|P_{i}^{(E)}\|\leq 1.$ Note that for $s_{1},\dots,s_{k}\in F$ and $T\in B(V_{F,k},l^{p}(d_{i}))$
\[P_{i}^{(E)}(T)(s_{1}\cdots s_{k}x)=\frac{1}{|E|}\sum_{s\in E}\sigma_{i}(s)T(s^{-1}s_{1}\cdots s_{k}x)=\]
\[\frac{1}{|E|}\sum_{s\in s_{k}^{-1}\cdots s_{1}^{-1}E}\sigma_{i}(s_{1}\cdots s_{k}s)T(s^{-1}x).\]
Let $B_{i}\subseteq \{1,\dots,d_{i}\}$ be the set of all $1\leq j\leq d_{i}$ such that
\[\sigma_{i}(s_{1}\cdots s_{k}s)^{-1}(j)=\sigma_{i}(s)^{-1}\sigma_{i}(s_{1}\cdots s_{k})^{-1}(j),\]
for all $s\in E,s_{1},\dots,s_{k}\in F,1\leq k\leq m.$ Then the above shows that if $T\in B(V_{E^{-1}F,m},l^{p}(B_{i}))$ then
\begin{equation}\label{E:MTMFE}
\|\sigma_{i}(s_{1}\cdots s_{k})P_{i}^{(E)}(T)(x_{l})-P_{i}^{(E)}(T)(s_{1}\cdots s_{k}x_{l})\|\leq 2\frac{|E\Delta s_{k}^{-1}\cdots s_{1}^{-1}E|}{|E|}\|T\|\|x_{l}\|,
\end{equation}
for $1\leq l\leq m.$

	Let $\varepsilon>0,$ and $M=\sup_{j}\|x_{j}\|<\infty.$  Since $\rho$ is a product norm, we may choose $N\in \NN,$ and $\kappa>0$ such that if $f,g\in l^{\infty}(\NN,l^{p}(d_{i}))$ and $\|f\|,\|g\|\leq M$ and
\[\max_{1\leq l\leq N}\|f(l)-g(l)\|_{p}<\kappa,\]
then
\[\rho(f-g)<\varepsilon.\]
Let $\delta>0$ depend upon $\kappa$ to be determined later. Let $m\geq \max(2,N)$ be an integer and let  $F$ be a symmetric finite subset of $\Gamma$ with $e\in F.$  Let $E\subseteq \Gamma$ be a finite set containing the identity, the set $E$ will depend upon $F,m,\delta$ in a manner to be determined later. Let $T\in \Hom_{\Gamma,l^{p},(A_{i})}(S,E^{-1}F,m,\delta,\sigma_{i}),$  then
\[P_{i}^{(E)}(\chi_{B_{i}}T)=\frac{1}{|E|}\sum_{s\in E}\sigma_{i}(s)\chi_{B_{i}}T\circ s^{-1}=\frac{1}{|E|}\sum_{s \in E}\chi_{\sigma_{i}(s)B_{i}}\sigma_{i}(s)T\circ s^{-1}.\]
Set $C_{i}=A_{i}\cap B_{i}\cap \bigcap_{s\in E}\sigma_{i}(s)(A_{i}\cap B_{i}).$ Then $\frac{|C_{i}|}{d_{i}}\to 1$ and for $1\leq j\leq m$
\begin{equation}\label{E:averageisFclose}
\|P_{i}^{(E)}(\chi_{B_{i}}T)(x_{j})-T(x_{j})\|_{l^{p}(C_{i})}\leq \frac{1}{|E|}\sum_{s\in E}\|\sigma_{i}(s)T(s^{-1}x_{j})-T(x_{j})\|_{l^{p}(\sigma_{i}(s)A_{i})}<\delta.
\end{equation}
By amenability of $\Gamma,$ we may choose $E$ with
\[\max_{\substack{1\leq k\leq m,\\ s_{1},\dots,s_{k}\in F}} 2\frac{|E\Delta s_{k}^{-1}\cdots s_{1}^{-1}E|}{|E|}\|x_{j}\|<\delta.\]
Then by (\ref{E:MTMFE}), we know $P_{i}^{(E)}(\chi_{B_{i}}(T))\in \Hom_{\Gamma,l^{p}}(S,F,m,\delta,\sigma_{i}).$ By (\ref{E:averageisFclose}),
\begin{equation}\label{E:YPMF}
\max_{1\leq j\leq m}\|\chi_{C_{i}}(P_{i}^{(E)}(\chi_{B_{i}}T)(x_{j})-T(x_{j}))\|_{p}<\delta.
\end{equation}
 For $A\subseteq \{1,\dots,n\},$ we use $1\otimes\chi_{A}$ for the operator on  $l^{\infty}(\NN,l^{p}(n))$ given by
\[[(1\otimes \chi_{A})f](j)=\chi_{A}f(j),\mbox{  }\, f\in l^{\infty}(\NN,l^{p}(n)),j\in \NN.\]
If we now force $\delta<\kappa,$ then by our choice of $\kappa,m,N$ and (\ref{E:YPMF}),
\begin{align*}
\alpha_{S}(\Hom_{\Gamma,l^{p},(A_{i})}(S,F,m,\delta,\sigma_{i}))\subseteq_{2\varepsilon,\rho_{l^{p}(d_{i})}}&(1\otimes \chi_{C_{i}})\alpha_{S}(\Hom_{\Gamma,l^{p}}(S,F,m,\delta,\sigma_{i})\\
&+\{f\in l^{\infty}(\NN,l^{p}(A_{i}^{c})):f(j)=0,\mbox{ if $j>N$}\}.
\end{align*}
Thus,
\[d_{3\varepsilon}(\alpha_{S}(\Hom_{\Gamma,l^{p},(A_{i})}(S,F,m,\delta,\sigma_{i})),\rho_{l^{p}}(d_{i}))\leq N|A_{i}^{c}|+d_{\varepsilon}(\alpha_{S}(\Hom_{\Gamma,l^{p}}(S,F,m,\delta,\sigma_{i})),\rho_{l^{p}(d_{i})}).\]
As
\[\frac{|A_{i}^{c}|}{d_{i}}\to 0,\]
dividing by $d_{i},$ taking the limit supremum over $i,$ then the limit supremum over $(F,m,\delta)$ and letting $\varepsilon\to 0$ proves that
\[\dim_{\Sigma,l^{p}}(S,\Gamma,(A_{i}),\rho)\leq \dim_{\Sigma,l^{p}}(V,\Gamma).\]

\end{proof}

It may not seem clear to the reader that amenability is important in the previous proposition. Note that $T\in \Hom_{\Gamma,l^{p},(A_{i})}(S,F,m,\delta,\sigma_{i})$ means that $\|T\|\leq 1$ and for all $1\leq k\leq m$ and $s_{1},\dots,s_{k}\in F$ we have
\[\|T(s_{1}\cdots s_{k}a_{j})-\sigma_{i}(s_{1})\cdots \sigma_{i}(s_{k})T(a_{j})\|_{l^{p}(A_{i})}<\delta,\]
whereas  $T\in \Hom_{\Gamma,l^{p}}(S,F,m,\delta,\sigma_{i})$ means that $\|T\|\leq 1$ and for all $1\leq k\leq m,$ and $s_{1},\dots,s_{k}\in F$ we have
\[\|T(s_{1}\cdots s_{k}a_{j})-\sigma_{i}(s_{1})\cdots \sigma_{i}(s_{k})T(a_{j})\|_{p}<\delta.\]
We invite the reader to think about the fact that the first does not imply the second, even if we replace $\delta$ with $\delta'$ for some $\delta'$ depending on $\delta$ with $\delta'\to 0$ as $\delta\to 0.$ Part of the essential difficulty is that our assumptions do not imply anything about $\|T(a_{j})\|_{l^{p}(A_{i}^{c})}$ even if we know $A_{i}^{c}$ is ``small.'' For example, we could a priori have that $T(a_{j})$ is supported on a small set. The significance of amenability is that we can use \emph{any} linear map
\[T\colon V_{F,m}\to l^{p}(d_{i}),\]
and, by using the operators $P_{i}^{(E)}$ in the proposition, produce an almost equivariant map. Moreover if we start with an element of $\Hom_{\Gamma,l^{p},(A_{i})}(S,F,m,\delta,\sigma_{i}),$ then $P_{i}^{(E)}(T)$ will be close to $T$ (after projecting to a complement of a small dimensional space).

\begin{cor}\label{C:soficindependenceofLP} Let $\Gamma$ be an amenable group with a uniformly bounded action on a separable Banach space $V.$ Let $\Sigma=(\sigma_{i}\colon \Gamma\to S_{d_{i}}),$ $\Sigma'=(\sigma_{i}'\colon \Gamma\to S_{d_{i}})$ be two sofic approximations. Then for all $1\leq p\leq \infty,$
\[\dim_{\Sigma,l^{p}}(V,\Gamma)=\dim_{\Sigma',l^{p}}(V,\Gamma).\]
\end{cor}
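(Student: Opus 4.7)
My plan is to combine the preceding proposition—which permits relaxing the equivariance condition off a subset of density tending to $1$—with the Elek-Szab\'o uniqueness theorem for sofic approximations of amenable groups. I would proceed in three stages: (i) dispose of the \emph{comparable case} where $d_{i}=d_{i}'$ and the two approximations are approximately conjugate, (ii) establish that the dimension is invariant under taking disjoint copies of a sofic approximation, and (iii) reduce the general case to (i) via (ii) and Elek-Szab\'o.

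For the comparable case, assume $d_{i}=d_{i}'$ and that there exist $\pi_{i}\in S_{d_{i}}$ with $d_{\Hamm}(\sigma_{i}(g),\pi_{i}\sigma_{i}'(g)\pi_{i}^{-1})\to 0$ for every $g\in\Gamma$. Let $U_{\pi_{i}}$ denote the isometry of $l^{p}(d_{i})$ induced by permuting coordinates via $\pi_{i}$. For each finite $F\subseteq\Gamma$, the set
\[A_{i}(F)=\{j\in\{1,\ldots,d_{i}\}:\sigma_{i}(g)(j)=\pi_{i}\sigma_{i}'(g)\pi_{i}^{-1}(j)\text{ for all }g\in F\}\]
has $|A_{i}(F)|/d_{i}\to 1$. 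Composing with $U_{\pi_{i}}$ sends $\Hom_{\Gamma,l^{p}}(S,F,m,\delta,\sigma_{i}')$ into $\Hom_{\Gamma,l^{p},(A_{i}(F))}(S,F,m,\delta,\sigma_{i})$—the equivariance defect is unchanged on $A_{i}(F)$ since $\sigma_{i}(g)$ agrees with $\pi_{i}\sigma_{i}'(g)\pi_{i}^{-1}$ there—and symmetrically for the reverse transport. The preceding proposition upgrades this to $\dim_{\Sigma,l^{p}}(X,\Gamma)=\dim_{\Sigma',l^{p}}(X,\Gamma)$.

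For arbitrary $d_{i}\neq d_{i}'$, I would first verify dimension invariance under taking disjoint copies: if $\sigma_{i}^{\sqcup N}\colon\Gamma\to S_{Nd_{i}}$ denotes the $N$-fold disjoint union, then $\dim_{\Sigma^{\sqcup N},l^{p}}(X,\Gamma)=\dim_{\Sigma,l^{p}}(X,\Gamma)$. This uses the decomposition $l^{p}(Nd_{i})=\bigoplus_{p}^{N}l^{p}(d_{i})$, which splits a map $T:X_{F,m}\to l^{p}(Nd_{i})$ into components $T_{1},\ldots,T_{N}$ whose $\sigma_{i}$-equivariance is equivalent to $\sigma_{i}^{\sqcup N}$-equivariance of $T$; counting $\varepsilon$-coverings of product sets shows the $\varepsilon$-dimension scales by $N$, which cancels the normalization by $Nd_{i}$. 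With this in hand, replace $\Sigma,\Sigma'$ by their copies $\Sigma^{\sqcup d_{i}'}$ and $(\Sigma')^{\sqcup d_{i}}$—both now on $S_{d_{i}d_{i}'}$—and apply Elek-Szab\'o's uniqueness theorem (after passing to subsequences if necessary) to produce the approximate conjugating permutations required by the first stage. The principal obstacle is the direct-sum invariance: the constraint $\|T\|\leq 1$ for $T:X_{F,m}\to l^{p}(Nd_{i})$ does not decouple into $\|T_{k}\|\leq 1$ on each factor, costing a factor of $N^{1/p}$ in operator norm. This is remedied by the freedom, noted in the excerpt, to replace $\Hom_{\Gamma}(S,F,m,\delta,\sigma_{i})$ by $\Hom_{\Gamma}(S,F,m,\delta,\sigma_{i})_{M}$ for any $M>0$ without changing the dimension.
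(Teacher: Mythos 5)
Your stage (i) is precisely the paper's argument: the Elek--Szab\'o conjugacy theorem for sofic representations of amenable groups produces permutations $\tau_{i}$ with $d_{\Hamm}(\tau_{i}\sigma_{i}(g)\tau_{i}^{-1},\sigma_{i}'(g))\to 0$, after which the two $\Hom$-spaces coincide relative to a set $A_{i}$ of density tending to $1$ and the preceding proposition finishes. Stages (ii) and (iii) are superfluous for the statement as given, since both approximations already map into $S_{d_{i}}$ for the same $d_{i}$; the only small adjustment needed in (i) is to define $A_{i}$ using all products of length at most $m$ of elements of $F$ (not just single elements of $F$), so that the composed permutations $\sigma_{i}(s_{1})\cdots\sigma_{i}(s_{k})$ genuinely agree with $\sigma_{i}'(s_{1})\cdots\sigma_{i}'(s_{k})$ on $A_{i}$, exactly as the paper does.
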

\begin{proof} An ultrafilter argument using Theorem 1 of \cite{ESZ2}  shows that we can find $\tau_{i}\colon S_{d_{i}}\to S_{d_{i}}$ such that
\[d_{\Hamm}(\tau_{i}\sigma_{i}(s)\tau_{i}^{-1},\sigma_{i}'(s))\to 0.\]
Replacing $\sigma_{i}$ by $\tau_{i}\circ \sigma_{i}\circ \tau_{i}^{-1},$ we may assume that
\[d_{\Hamm}(\sigma_{i}(s),\sigma_{i}'(s))\to 0\]
for all $s\in \Gamma.$ In this case, we can find $A_{i}\subseteq \{1,\dots,d_{i}\}$ such that
\[\frac{|A_{i}|}{d_{i}}\to 1\]
and for all $s_{1},\dots,s_{n}\in \Gamma,$ we have
\[\sigma_{i}(s_{1}\cdots s_{n})(j)=\sigma_{i}(s_{1})\cdots \sigma_{i}(s_{n})(j)=\sigma_{i}'(s_{1})\cdots \sigma_{i}'(s_{n})(j)=\sigma_{i}'(s_{1}\cdots s_{n})(j)\]
 for all $j\in A_{i}$ and all sufficiently large $i.$ Thus if we are given a finite $F\subseteq \Gamma,$ an $m\in \NN$ and a $\delta>0,$ then for all large $i$
\[\Hom_{\Gamma,l^{p},(A_{i})}(S,F,m,\delta,\sigma_{i})=\Hom_{\Gamma,l^{p},(A_{i})}(S,F,m,\delta,\sigma_{i}').\]
The corollary now follows from the preceding proposition.

\end{proof}

\begin{proposition}\label{P:finitegroup} Let $\Gamma$ be a finite group acting on a finite-dimensional vector space $V.$ For $n\in \NN,$ let
\[n=q_{n}|\Gamma|+r_{n}\]
where $0\leq r_{n}<|\Gamma|$ and $q_{n},r_{n}\in \NN.$ Let $A_{n}$ be a set of size $r_{n}$ and define a sofic approximation  $\Sigma=(\sigma_{n}\colon \Gamma\to \Sym((\Gamma\times \{1,\dots,q_{n}\})\sqcup A_{n})$ by
\[\sigma_{n}(s)(g,j)=(sg,j)\mbox{ for $s\in \Gamma,1\leq j\leq q_{n},$}\]
\[\sigma_{n}(s)(a)=a\mbox{ for $a\in A_{n}$}.\]
Then for any $1\leq p\leq \infty$
\[\dim_{\Sigma,l^{p}}(V,\Gamma)=\underline{\dim}_{\Sigma,l^{p}}(V,\Gamma)=\frac{\dim_{\CC}V}{|\Gamma|}.\]

\end{proposition}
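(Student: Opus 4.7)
The plan is to match an upper bound and a lower bound, both equal to $\dim_\CC X/|\Gamma|$, by reducing approximate equivariance to exact equivariance. The key feature of this setup is that the given $\sigma_n$ is an honest group homomorphism (it arises from the permutation action of $\Gamma$ on $\Gamma\times\{1,\ldots,q_n\}\sqcup A_n$), so averaging over $\Gamma$ converts any approximately equivariant map into an exactly equivariant one with no error from $\sigma_n$ being non-multiplicative. Viewing $l^p(d_n)\cong l^p(\Gamma)^{\oplus q_n}\oplus \CC^{r_n}_{\mathrm{triv}}$ as a $\Gamma$-representation and using $\dim_\CC\Hom_\Gamma(X,l^p(\Gamma))=\dim_\CC X$ (the regular representation contains each irreducible $\pi$ with multiplicity $\dim\pi$) together with $\dim_\CC\Hom_\Gamma(X,\CC_{\mathrm{triv}})=\dim X^\Gamma$, the space of exactly equivariant maps from $X$ to $l^p(d_n)$ has dimension
\[D_n:=\dim_\CC\Hom_\Gamma(X,l^p(d_n))=q_n\dim X+r_n\dim X^\Gamma,\]
and $D_n/d_n\to\dim X/|\Gamma|$ since $r_n<|\Gamma|$ is bounded.

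For the upper bound, I would fix $F\supseteq\Gamma$ and $T\in\Hom_\Gamma(S,F,m,\delta,\sigma_n)$, and consider
\[P(T)=\frac{1}{|\Gamma|}\sum_{g\in\Gamma}\sigma_n(g^{-1})\circ T\circ g.\]
Since $\sigma_n$ is a true homomorphism, $P(T)$ is exactly equivariant, and the approximate equivariance of $T$ gives $\|T(x_j)-P(T)(x_j)\|_{l^p(d_n)}\le\delta$ for each $j\le m$. Hence $\alpha_S(\Hom_\Gamma(S,F,m,\delta,\sigma_n))$ is $O(\delta)$-contained in the $D_n$-dimensional subspace $\alpha_S(\Hom_\Gamma^{\mathrm{exact}}(X,l^p(d_n)))$ with respect to $\rho_V$, and dividing by $d_n$ and letting $\delta\to 0$ yields $\dim_{\Sigma,l^p}(X,\Gamma)\le\dim X/|\Gamma|$.

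For the lower bound, I would take $S=(x_1,\ldots,x_N)$ to be a basis of $X$. Then $\alpha_S$ is an injective linear map on $\Hom_\Gamma^{\mathrm{exact}}(X,l^p(d_n))$, and since $x_1,\ldots,x_N$ span $X$ there is a constant $C_S$ depending only on $S$ satisfying $\|T\|_{\mathrm{op}}\le C_S\max_j\|T(x_j)\|$ for every exactly equivariant $T$. Therefore $\alpha_S$ sends the unit ball of $\Hom_\Gamma^{\mathrm{exact}}(X,l^p(d_n))$ onto a set containing a $\rho_V$-ball of some positive radius $r_S>0$ in the $D_n$-dimensional subspace $\alpha_S(\Hom_\Gamma^{\mathrm{exact}})$. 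Since exactly equivariant contractions lie in every $\Hom_\Gamma(S,F,m,\delta,\sigma_n)$, and the $\varepsilon$-covering dimension of a ball of radius $r_S$ in a $D$-dimensional subspace of a normed space equals $D$ for $\varepsilon<r_S$, I get $d_\varepsilon(\alpha_S(\Hom_\Gamma(S,F,m,\delta,\sigma_n)))\ge D_n$ for $\varepsilon<r_S$. Taking $\liminf_n$ yields $\underline{\dim}_{\Sigma,l^p}(X,\Gamma)\ge\lim_n D_n/d_n=\dim X/|\Gamma|$.

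The main technical point is the geometric lemma used in the lower bound: a ball of positive radius in a $D$-dimensional subspace of a normed space has $\varepsilon$-covering dimension equal to $D$ for small $\varepsilon$. This follows from a Hahn--Banach argument, since any $\varepsilon$-covering subspace $W$ of dimension less than $D$ would induce a quotient map $V_0\to V/W$ of operator norm less than one, contradicting the existence of a norm-attaining functional on $V_0$ that annihilates $W$ when $\dim W<\dim V_0$. The crucial uniformity is that the threshold $r_S$ depends only on the chosen basis $S$ of $X$, not on $n$, so the $\sup_\varepsilon$ in the definition of dimension picks up the full bound.
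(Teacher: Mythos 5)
Your overall strategy is the same as the paper's: exploit the fact that this particular $\sigma_n$ is an honest homomorphism, average over the finite group to replace an approximately equivariant $T$ by the exactly equivariant $P(T)$ at cost $O(\delta)$, and then match the $\varepsilon$-dimension of the space of exactly equivariant contractions against $\dim_\CC X/|\Gamma|$ from both sides. The upper bound as you present it is fine.

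The genuine soft spot is the geometric lemma underlying your lower bound. You need: if $V_0$ is a $D$-dimensional subspace of a normed space $Z$ and $W\subseteq Z$ has $\dim W<D$, then $W$ cannot $\varepsilon$-contain the ball $B_{V_0}(0,r)$ for $\varepsilon<r$. This statement is true, but it is \emph{not} a Hahn--Banach argument. The functional you invoke --- norm one on $Z$, vanishing on $W$, and attaining its norm on the unit ball of $V_0$ --- does not exist by duality alone when $W\not\subseteq V_0$; its existence is precisely the Krein--Krasnoselskii--Milman theorem on the deviation of subspaces, whose proof goes through the Borsuk--Ulam theorem. (Hahn--Banach only gives you some $\phi\in W^{\perp}$ with $\phi|_{V_0}\neq 0$; it gives no control on $\|\phi|_{V_0}\|/\|\phi\|$.) If instead you try the naive route --- choose a bounded projection $Q$ of $Z$ onto $V_0$ and apply the Riesz lemma inside $V_0$ to the proper subspace $Q(W)$ --- you only rule out $\varepsilon<r/\|Q\|$, and the projection constant of a $D$-dimensional subspace can grow like $\sqrt{D}$. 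Since here $D=D_n\to\infty$, that version of the threshold degenerates and the $\sup_\varepsilon$ picks up nothing.

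The paper repairs exactly this point by producing a \emph{norm-one} projection onto the space of equivariant maps: work with the operator norm on $B(X,l^p(d_n))$ as the pseudonorm (legitimate because $X$ is finite-dimensional and $S$ may be taken finite), cut down by $\chi_{\Gamma\times\{1,\dots,q_n\}}$ (norm one on $l^p$), and then average over $\Gamma$ (norm one because $\sigma_n$ acts isometrically). With a norm-one projection $P$ onto $V_n$, the argument is elementary and dimension-free: if $W$ $\varepsilon$-contains $B_{V_n}$, then so does the proper subspace $P(W)\subseteq V_n$, contradicting the Riesz lemma for any $\varepsilon<1$. So either cite Krein--Krasnoselskii--Milman explicitly, or (better, and closer to the source) replace your abstract lemma by this norm-one projection; note also that transporting the projection through $\alpha_S$ to the product norm $\rho_V$ requires a word, which is why the paper switches to the operator-norm pseudonorm before running this step. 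With that substitution your proof is complete and agrees with the paper's.
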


\begin{proof} Fix a norm on $V.$ By finite dimensionality, we may use the operator norm on $B(V,l^{p}(d_{i}))$ as our pseudonorm and we may replace $\Hom_{\Gamma}(S,\Gamma,m,\delta,\sigma_{i})$ by the space $\Hom'_{\Gamma}(\Gamma,m,\delta,\sigma_{i})$ of all operators $T\colon V\to l^{p}(d_{i})$ such that
\[\|T\circ s_{1}\cdots s_{k}-\sigma_{i}(s_{1})\cdots \sigma_{i}(s_{k})\circ T\|<\delta\]
for all $1\leq k\leq m,s_{1},\dots,s_{k}\in \Gamma.$

 Let $V_{n}\subseteq B(V,l^{p}(n))$ be the linear subspace of all linear operators
\[T\colon V\to l^{p}(\Gamma\times \{1,\dots,q_{n}\})\]
which are equivariant with respect to the $\Gamma$-action on $l^{p}(\Gamma\times \{1,\dots, q_{n}\})$ given by
\[(gf)(x,j)=f(g^{-1}x,j)\mbox{ for $g,x\in\Gamma,1\leq j\leq q_{n}$.}\]
 Note that we have norm one projections
\[B(V,l^{p}(n))\to B(V,l^{p}(\Gamma\times \{1,\dots,q_{n}\}),\]
\[B(V,l^{p}(\Gamma\times \{1,\dots,q_{n}\})\to V_{n},\]
given by multiplication by $\chi_{\{1,\dots,q_{n}\}}$ and by
\[T\to \frac{1}{|\Gamma|}\sum_{s\in \Gamma}\sigma_{n}(s)^{-1}\circ T\circ s.\]
Let $P_{n}$ denote the composition of these two projections. Since we have a norm one projection from $B(V,l^{p}(n))\to V_{n}$ the Riesz Lemma implies that
\begin{equation}\label{E:RieszLemma}
d_{\varepsilon}(\{T\in V_{n}:\|T\|\leq 1\},\|\cdot\|)\geq \dim_{\CC} V_{n}.
\end{equation}
With the norm in (\ref{E:RieszLemma}) being the operator norm. Define an action of $\Gamma$ on $V^{*}$ by $(g\phi)(x)=\phi(g^{-1}x).$ Let $W_{n}$ be the set of all $\Gamma$-equivariant operators in $B(l^{p}(\Gamma\times \{1,\dots,q_{n}\},V^{*}).$ We use $T^{t}$ for the Banach space adjoint of $T.$  Then $T\mapsto T^{t}$ defines an isomorphism $V_{n}\cong W_{n}.$ For $f\in l^{p}(\Gamma),k\in l^{p}(\{1,\dots,q_{n}\})$ let $f\otimes k$ be defined  by $(f\otimes k)(g,j)=f(g)k(j).$ We leave it as an exercise to the reader to verify that the map
\[\Phi\colon W_{n}\to B(l^{p}(\{1,\dots,q_{n}),V^{*})\]
given by
\[\Phi(T)(f)=T(\chi_{\{e\}}\otimes f)\]
is an isomorphism. Thus,
\[\dim_{\CC}(V_{n})=\dim_{\CC}(W_{n})=q_{n}\dim_{\CC}(V).\]
For $T\in \Hom'_{\Gamma}(\Gamma,m,\delta,\sigma_{i})$ we have
\[\|P_{n}(T)-T\|_{B(V,l^{p}(n))}<\delta.\]
Thus
\begin{equation}\label{E:projectionestimate}
d_{\varepsilon}(\Hom'_{\Gamma}(\Gamma,m,\delta,\sigma_{i}),\|\cdot\|)\leq (\dim_{\CC}V)q_{n}+r_{n},
\end{equation}
and $(\ref{E:RieszLemma}),(\ref{E:projectionestimate})$ are enough to imply the proposition.

\end{proof}

\begin{cor}\label{C:finitegroup} Let $\Gamma$ be a finite group acting on a finite-dimensional vector space $V.$ For any sofic approximation $\Sigma=(\sigma_{i}\colon \Gamma\to S_{d_{i}})$ of $\Gamma$ and $1\leq p\leq \infty$ we have
\[\dim_{\Sigma,l^{p}}(V,\Gamma)=\underline{\dim}_{\Sigma,l^{p}}(V,\Gamma)=\frac{\dim_{\CC}V}{|\Gamma|}.\]

\end{cor}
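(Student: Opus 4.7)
The plan is to reduce the general case to the specific sofic approximation treated in Proposition~\ref{P:finitegroup}. Since $\Gamma$ is finite, it is amenable, so the preceding corollary on independence of $l^{p}$-dimension from the chosen sofic approximation (for amenable groups) applies. Given any sofic approximation $\Sigma = (\sigma_{i}\colon \Gamma\to S_{d_{i}})$, I would let $\Sigma_{0}$ denote the specific sofic approximation built from the regular representation on $\Gamma\times\{1,\dots,q_{n}\}\coprod A_{n}$ as in Proposition~\ref{P:finitegroup}. Then
\[
\dim_{\Sigma,l^{p}}(X,\Gamma) = \dim_{\Sigma_{0},l^{p}}(X,\Gamma) = \frac{\dim_{\CC}X}{|\Gamma|},
\]
the first equality by the preceding corollary and the second by Proposition~\ref{P:finitegroup}.

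For $\underline{\dim}_{\Sigma,l^{p}}(X,\Gamma)$, the same reduction should work. Although the preceding corollary is stated only for $\dim$, its proof in fact produces something stronger: after conjugating one sofic approximation so that $d_{\Hamm}(\sigma_{i}(s),\sigma_{i}'(s))\to 0$ for every $s\in\Gamma$, one finds a full-density sequence $A_{i}\subseteq\{1,\dots,d_{i}\}$ on which, for all sufficiently large $i$,
\[
\Hom_{\Gamma,l^{p},(A_{i})}(S,F,m,\delta,\sigma_{i}) = \Hom_{\Gamma,l^{p},(A_{i})}(S,F,m,\delta,\sigma_{i}').
\]
Any invariant built from these cut-down Hom-sets, whether the outer sequence is handled by $\limsup$ or $\liminf$ in $i$, therefore takes the same value for $\Sigma$ and $\Sigma'$. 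Combined with the analog for $\underline{\dim}$ of the amenable proposition preceding the corollary, this yields $\underline{\dim}_{\Sigma,l^{p}}(X,\Gamma) = \underline{\dim}_{\Sigma_{0},l^{p}}(X,\Gamma)$, and Proposition~\ref{P:finitegroup} identifies this as $(\dim_{\CC}X)/|\Gamma|$.

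The only step requiring real care, which I expect to be bookkeeping rather than a substantive obstacle, is verifying that the amenable proposition extends verbatim to $\underline{\dim}$. Its proof uses averaging against a two-sided F\o lner sequence to pass between almost-equivariant maps on the $(A_{i})$ cut-down and almost-equivariant maps on the full space, via the estimate $\|P_{i}^{(E)}(\chi_{B_{i}}T)(x_{j}) - T(x_{j})\|_{l^{p}(C_{i})} < 2\delta$. Since this comparison is uniform in $i$ and runs in both directions between the two Hom-sets, up to a small error on a full-density set, the corresponding inequalities between $\varepsilon$-dimensions are preserved when the outer $\limsup$ is replaced by $\liminf$. Once this analog is in hand, the corollary is immediate.
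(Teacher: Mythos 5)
Your proposal matches the paper's proof: the paper likewise takes $\Sigma'=(\rho_{d_{i}}\colon \Gamma\to S_{d_{i}})$ with $\rho_{n}$ as in Proposition~\ref{P:finitegroup} and invokes the independence of the dimension from the choice of sofic approximation for amenable groups. Your extra care in extending that independence statement to $\underline{\dim}$ is warranted (the paper's one-line proof glosses over it), and your justification --- that the cut-down $\Hom$-sets literally coincide for all large $i$, so both the $\limsup$ and the $\liminf$ in $i$ are unaffected --- is correct.
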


\begin{proof} Take
\[\Sigma'=(\rho_{d_{i}}\colon \Gamma\to S_{d_{i}})\]
where $\rho_{n}$ is defined as in  Proposition \ref{P:finitegroup}, then use Proposition \ref{P:finitegroup} and Corollary \ref{C:soficindependenceofLP}.

\end{proof}

\begin{proposition}\label{P:Zcase} Let $V$ be a finite-dimensional Banach space with a uniformly bounded action of $\ZZ.$ Let $\sigma_{n}\colon \ZZ\to \Sym(\ZZ/n\ZZ)$ be given by the quotient map $\ZZ\to \ZZ/n\ZZ.$ Then for all $1\leq p\leq \infty,$
\[\dim_{\Sigma,l^{p}}(V,\ZZ)=0.\]
\end{proposition}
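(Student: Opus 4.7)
The plan is to reduce to a one-dimensional $\ZZ$-invariant summand and apply Cesaro averaging on the sofic side. First, since $\sup_{k} \|u^{k}\|_{X} < \infty$ on the finite-dimensional $X$, the generator $u$ is semisimple over $\CC$ with all eigenvalues on the unit circle. Decomposing $X$ into one-dimensional $u$-invariant subspaces and applying the subadditivity from Property 2 iteratively, it suffices to treat the case $X = \CC$ with $u$ acting by a scalar $\lambda$, $|\lambda| = 1$.

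In this reduced setting, identify $T \in \Hom_{\ZZ, l^{p}}(S, F, m, \delta, \sigma_{n})$ (with $F = \{-1, 0, 1\}$) with the single vector $y = T(1) \in l^{p}(n)$, where $v := \sigma_{n}(1)$ is the cyclic shift; the hypotheses become $\|y\|_{p} \leq 1$ and $\|v^{k} y - \lambda^{k} y\|_{p} < \delta$ for $|k| \leq m$. The Cesaro average $P^{(m)}_{\lambda} := \frac{1}{m} \sum_{k=0}^{m-1} \lambda^{-k} v^{k}$ is a contraction on $l^{p}(n)$, and
\[y - P^{(m)}_{\lambda} y = \frac{1}{m} \sum_{k=0}^{m-1} \lambda^{-k}(\lambda^{k} y - v^{k} y)\]
yields $\|y - P^{(m)}_{\lambda} y\|_{p} < \delta$.

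The key step is to show $P^{(m)}_{\lambda} y$ is $l^{p}$-approximable by a vector in a subspace of $l^{p}(n)$ of dimension $O(n/m)$. In the $v$-eigenbasis $\chi_{j}$ (with $v\chi_{j} = \omega^{-j}\chi_{j}$, $\omega = e^{2\pi i/n}$), $P^{(m)}_{\lambda}$ is diagonal with eigenvalues $a_{j} = \frac{1}{m}\sum_{k=0}^{m-1}(\omega^{-j}/\lambda)^{k}$ that decay like $O(1/(m|\omega^{-j} - \lambda|))$ for $\omega^{-j}$ away from $\lambda$. Let $W_{m} := \Span\{\chi_{j} : |\omega^{-j} - \lambda| \leq 4/m\}$, which has dimension $O(n/m)$. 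For $p = 2$, the orthogonality of $\{\chi_{j}\}$ and the estimate $\sum_{k=1}^{m}|\alpha^{k} - 1|^{2} \geq m/2$ for $|\alpha - 1| \geq 4/m$ yield $\|y - P_{W_{m}} y\|_{2} \leq \delta\sqrt{2}$ directly. For general $p$, one constructs a trigonometric polynomial $\phi$ on $\TT$ with $\phi(\lambda) = 1$, $\supp \phi$ of length $O(1/m)$, and Wiener norm $\|\widehat{\phi}\|_{l^{1}} = O(\log m)$; then $\phi(v)$ is a Fourier multiplier on $l^{p}(n)$ of rank $O(n/m)$ and operator norm $O(\log m)$, and the factorization $1 - \phi(z) = (z - \lambda) r(z)$ (with $r$ of controlled Wiener norm) gives $\|y - \phi(v) y\|_{p} = \|r(v)(v - \lambda) y\|_{p} \leq \|r(v)\|_{p \to p} \delta$.

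Granting such a near-projection $P$ onto a subspace $W \subseteq l^{p}(n)$ with $\dim W = O(n/m)$ and $\|y - Py\|_{p} = O(\delta \log m)$, the image $\alpha_{S}(\Hom_{\ZZ, l^{p}}(S, F, m, \delta, \sigma_{n}))$ is $O(\delta \log m)$-contained in $\alpha_{S}(B(X, W))$, whose linear dimension is $\dim(X) \cdot \dim(W) = O(n/m)$ since $X$ is finite-dimensional and $\alpha_{S}$ is injective once $\{x_{j}\}_{j \leq m}$ spans $X$. Dividing by $n$ and passing to $\limsup_{n\to\infty}$, then taking infimum over $(F, m, \delta)$ with $m \to \infty$, yields $\dim_{\Sigma, l^{p}}(X, \ZZ) = 0$. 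The main obstacle is the Fourier multiplier construction for $p \neq 2$: simultaneously controlling the rank, the $l^{p} \to l^{p}$ operator norm of $\phi(v)$, and the Wiener norm of the auxiliary $r$ requires careful harmonic analysis on $\ZZ/n\ZZ$, whereas for $p = 2$ the orthogonality of the $v$-eigenbasis makes the argument direct.
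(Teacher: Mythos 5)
Your proposal is correct in strategy but follows a genuinely different route from the paper. The paper also reduces (by averaging and passing to irreducible summands) to a single unit vector $\xi$ on which the generator acts by a unitary $U$, but its key step is a \emph{physical-space} argument: compactness of $\overline{\pi(\ZZ)}$ gives an integer $m$ with $\|U^{mj}-1\|<\delta$ for $1\leq j\leq k$, so $T(\xi)$ is approximately periodic under $\sigma_n(m)$ and hence within $O(\delta k)$ of the subspace of functions constant along the length-$k$ arithmetic progressions of step $m$, which has dimension about $n/k$; this works verbatim for every $1\leq p\leq\infty$ with no harmonic analysis. You instead localize in \emph{frequency}: the almost-eigenvector relation $\|v^ky-\lambda^ky\|_p<\delta$ forces the spectrum of $y$ (relative to the cyclic shift) into an arc of length $O(1/m)$ around $\lambda$, whose spectral subspace has dimension $O(n/m)$. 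Your $p=2$ computation is complete and correct. What each approach buys: the paper's is uniform in $p$ and elementary; yours isolates the character $\lambda$ explicitly and for $p=2$ gives a clean quantitative bound without choosing an approximate period.

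Two corrections are needed in your general-$p$ step, though neither is fatal. First, a trigonometric polynomial cannot be supported on a proper arc of $\TT$ (it would vanish identically), so $\phi$ must instead be a compactly supported function in the Wiener algebra $A(\TT)$ --- e.g.\ $\phi(e^{i\theta})=\psi(m(\theta-\theta_0))$ for a fixed $C^2$ bump $\psi$ with $\psi(0)=1$; then $\|\widehat{\phi}\|_{l^1}=O(1)$ (not $O(\log m)$), $\phi(v)=\sum_k\widehat{\phi}(k)v^k$ converges, $\|\phi(v)\|_{p\to p}\leq\|\widehat{\phi}\|_{l^1}$ for \emph{every} $1\leq p\leq\infty$, and $\rank\phi(v)=O(n/m)$. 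Second, the factor $r=(1-\phi)/(z-\lambda)$ satisfies only $\|\widehat{r}\|_{l^1}\leq\sum_{k\neq 0}|k|\,|\widehat{\phi}(k)|=O(m)$ (via $1-\phi(z)=-\sum_{k\neq0}\widehat{\phi}(k)(z^k-\lambda^k)$ and $\|(z^k-\lambda^k)/(z-\lambda)\|_{A(\TT)}=|k|$), not $O(\log m)$. The resulting loss $\|y-\phi(v)y\|_p=O(m\delta)$ is harmless, because in the net $(F,m,\delta)$ defining $\dim_\Sigma$ one may take $\delta$ after $m$ (say $\delta<\varepsilon/m^2$), so the ``main obstacle'' you flag is in fact routine once $\phi$ is chosen as above. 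With that repair your argument closes for all $1\leq p\leq\infty$.
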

\begin{proof}

	Since all norms on a finite-dimensional space are equivalent, we may assume that $V$ is a Hilbert space. Since $V$ is now a Hilbert space, we will call it $H$ instead. Let $\pi\colon \ZZ\to B(H)$ be the representation given by the action of $\ZZ,$ and let $K=\overline{\pi(\ZZ)}.$ By finite-dimensionality, $K$ is a compact group. Let $\ip{\cdot,\cdot}_{H}$ be the inner product on $H.$ Define a new inner product on $H$ by
\[\ip{\xi,\eta}=\int_{K}\ip{T\xi,T\eta}_{H}\,dT,\]
where the integration is with respect to the Haar measure on $K.$ We leave it as an exercise to verify that this is indeed an inner product inducing a norm equivalent to the original norm on $H$ and that $K$ acts unitarily with respect to $\ip{\cdot,\cdot}.$ Thus we may assume that $\pi(\ZZ)\subseteq U(H).$ Set $U=\pi(1).$ By passing to direct sums, we may assume that $\pi$ is irreducible. Thus if we fix any $\xi\in H$ with $\|\xi\|=1,$ then $\xi$ is generating. We will take $S=(\xi,0,0,\dots)$ and as a pseudonorm we take
\[\rho(T)=\|T(\xi)\|.\]
Fix $n\in \NN,$ since $\alpha_{S}(T)(j)=0$ for all $T\in B(\mathcal{H},l^{p}(d_{i}))$ and  $j\geq 2,$ we may view $\alpha_{S}$ as a map into $l^{p}(d_{i}).$

	Fix $1>\varepsilon>0,$ and let $\varepsilon>\delta>0$. Choose $k$ such that $\delta^{p} k<\varepsilon,$ (if $p=\infty$ then let $k$ be any integer.) Since $\overline{\pi(\ZZ)}$ is compact, we can find an integer $m$ such that
\[\|U^{mj}-1\|<\delta,\]
for $1\leq j\leq k.$
We may assume that $m$ is large enough so that $\{U^{j}\xi:-m\leq j\leq -1\}$ spans $H.$  Let $F=\{j\in \ZZ:|j|\leq m(2k+1)\}$. For $n\in\NN,$ let $q_{n}\in \NN\cup\{0\},0\leq r_{n}<k$ be the integers defined by
\[n=q_{n}mk+r_{n}.\]
Define $Q_{j},j=0,\dots,k-1$ by
\[Q_{j}=\bigcup_{l=1}^{m}\{jm+l+qm_{k}:0\leq q\leq q_{n}-1\}.\]
Pictorially, if we think of $\{1,\dots, q_{n}mk\}$ as a rectangle formed out of $mk$ horizontal dots and $q_{n}$ vertical dots, then $Q_{j}$ is the rectangle from the $jm+1^{st}$ horizontal dot to the $(j+1)m^{th}$ horizontal dot.  Fix $T\in \Hom_{\Gamma}(S,F,m,\delta,\sigma_{n}).$   Let $f_{j}\colon Q_{j}\to \CC$ be given by
\[f_{j}(l)=T(\xi)(\sigma_{n}(mj)^{-1}(l)).\]
We then extend $f_{j}$ to a map $\ZZ/n\ZZ\to\CC$ by saying that $f_{j}(r)=0$ for $r\in\ZZ/n\ZZ\setminus Q_{j}.$ Note that for $1\leq p<\infty,$
\begin{align*}\left\|T(\xi)-\sum_{j=0}^{k-1}f_{j}\right\|_{l^{p}(\{1,\dots,q_{n}mk\})}&=\left(\sum_{j=0}^{k-1}\|T(\xi)-\sigma_{n}(mj)T(\xi)\|_{l^{p}(Q_{j})}^{p}\right)^{1/p}\\
&<\delta k+\left(\sum_{j=0}^{k-1}\|T((U^{-mj}-1)\xi)\|_{p}^{p}\right)^{1/p}\\
&<2\delta k\\
&<2\varepsilon,
\end{align*}
using the triangle inequality on $l^{p}.$ Similarly for $p=\infty,$
\[\left\|T(\xi)-\sum_{j=0}^{k-1}f_{j}\right\|_{l^{\infty}(\{1,\dots,q_{n}mk\})}<2\varepsilon.\]
Additionally, if $l\in Q_{0}$ and  $0\leq s\leq k-1,$ then
\[f(l+ms)=\sum_{j=0}^{k-1}f_{j}(l+ms),\]
and since $Q_{0},\cdots, Q_{k-1}$ are pairwise disjoint the only nonzero term in the above sum is when $j=s.$ Thus
\[f(l+ms)=f_{s}(l+ms)=T(\xi)(l).\]
So $f$ is constant on $\{l+mt:0\leq t\leq k-1\}.$ Thus
\begin{align*}
\alpha_{S}(\Hom_{\Gamma}(S,F,m,&\delta,\sigma_{n}))\subseteq_{2\varepsilon,\|\cdot\|_{p}} l^{p}(\{1,\dots,n\}\setminus\{1,\dots,q_{n}mk\})+\\
&\{f\in l^{p}(q_{n}mk):f(i+mj)=f(i),\mbox{ for all } i\in Q_{0},0\leq j\leq k-1\}.
\end{align*}
So
\[\frac{1}{n}d_{2\varepsilon}(\alpha_{S}(\Hom_{\Gamma}(S,F,m,\delta,\sigma_{n}),\|\cdot\|_{p})\leq \frac{q_{n}m}{n}+\frac{r_{n}}{n}.\]
Letting $n\to \infty,$ taking the limit supremum over $(F,m,\delta)$ and then letting $\varepsilon\to 0$ we conclude that
\[\dim_{\Sigma,l^{p}}(\mathcal{H},\Gamma)\leq\frac{1}{k}.\]
Since $k$ becomes arbitrarily large when $\delta$ becomes small (or can be made arbitrarily large when $p=\infty$), this completes the proof.

\end{proof}

	We will now proceed to prove that if $\Gamma$ is an infinite sofic group and $\Sigma$ is a sofic approximation of $\Gamma,$ then for any finite-dimensional representation $V$ of $\Gamma$ we have
\[\dim_{\Sigma,l^{p}}(V,\Gamma)=0.\]
The method is based on passing to an action of the group on a measure space and then using that the corresponding equivalence relations contains an action of $\ZZ.$

	We shall first work with the trivial action of $\Gamma$ on $\CC.$ For this, fix a sofic group $\Gamma$ and a sofic approximation $\Sigma$ . For $S=(1,0,0,\dots),$ and the trivial action of $\Gamma$ on $\CC$ the map $T\to T(\{1\})$ identifies $\Hom_{\Gamma,p}(S,F,m,\delta,\sigma_{i})$ with all vectors $\xi\in l^{p}(d_{i})$ such that
\[\|\sigma_{i}(g)\xi-\xi\|_{p}<\delta\]
for all $g\in F.$ For the proof of the next Lemma we will also need the concept of a sofic approximation of an equivalence relation. Let us recall some preliminary definitions.

\begin{definition} \emph{A discrete equivalence relation, is a triple $(\R,X,\mu)$ where $X$  is a standard Borel space, $\mu$ is a Borel probability measure on $X,$  $\R\subseteq X\times X$ is a  Borel subset such that the relation $x\thicksim y$ if $(x,y)\in \R$ is an equivalence relation and such that for almost every $x\in X, \{y:(x,y)\in \R\}$ is countable. We say that $\R$ is} measure-preserving \emph{if for all Borel $A\subseteq \R$}
\[\int_{X}|\{y\in X:(x,y)\in A\}|\,d\mu(x)=\int_{X}|\{x\in X:(x,y)\in A\}|\,d\mu(y).\]
\emph{We shall denote the above quantity by $\overline{\mu}(A)$. Then $\overline{\mu}$ is a measure on the Borel subsets of $\R.$ We say that $\R$ is} ergodic \emph{if for all measurable $f\colon X\to \CC$  with $f(x)=f(y)$ for $\overline{\mu}$-almost every $(x,y)\in \R,$ there is a $\lambda\in \CC$ with $f(x)=\lambda$ almost everywhere with respect to $\mu$.}
\end{definition}

	The main example of relevance for us is given by taking a countable discrete group $\Gamma$ and a free measure-preserving action $\Gamma\actson (X,\mu)$ where $(X,\mu)$ is a standard probability space. In this case $\R_{\Gamma\actson(X,\mu)}=\{(x,gx):g\in \Gamma\}.$  Such an action is $\emph{free}$ if for all $g\in \Gamma\setminus\{e\},$ $\mu(\{x\in X:gx=x\})=0.$

\begin{definition}\emph{Let $(\R,X,\mu)$ be a discrete, measure-preserving equivalence relation. A} partial morphism \emph{is a bimeasurable bijection  $\phi\colon \dom(\phi)\to \ran(\phi)$ where $\dom(\phi),\ran(\phi)\subseteq X$ are measurable and $(x,\phi(x))\in \R$ for almost every $x\in \dom(\phi).$ We let $\phi^{-1}$ be the partial morphism with $\dom(\phi^{-1})=\ran(\phi),\ran(\phi^{-1})=\dom(\phi)$ and $\phi(\phi^{-1}(x))=x$ for $x\in \ran(\phi).$ If $\phi,\psi\in [[\R]],$ we let $\phi\circ \psi$ be the partial morphism with $\dom(\phi\circ \psi)=\{x\in \dom(\psi):\psi(x)\in \dom(\phi)\},$ and $(\phi\circ \psi)(x)=\phi(\psi(x)).$ If $A\subseteq X$ is measurable, we let $\id_{A}\colon A\to A$ be the partial morphism which is the identity on $A.$  If $\phi,,\psi$ and partial morphisms with $\mu(\dom(\phi)\cap \dom(\psi))=0,\mu(\ran(\phi)\cap \ran(\psi))=0,$ we let $\phi+\psi$ be the partial morphism with domain $\dom(\phi)\cup \dom(\psi)\setminus[\dom(\phi)\cap \dom(\psi)]$ which is equal to $\phi(x)$ for $x\in \dom(\phi)\setminus \dom(\psi),$ and equal to $\psi(x)$ for $x\in \dom(\psi)\setminus [\dom(\phi)\cap \dom(\psi)].$ We leave it as an exercise to the reader to verify that, after removing a null set, $\phi+\psi$ is a partial morphism. We let $[[\R]]$ be the set of all partial morphisms of $\R,$ we let $[\R]$ be the set of all partial morphisms of $\R$ with $\mu(\dom(\phi))=1. $ We identify two elements $\phi,\psi$ of $[[\R]]$ if $\mu(\dom(\phi)\Delta\dom(\psi))=0,$ and $\mu(\{x\in \dom(\phi)\cap \dom(\psi):\phi(x)\ne\psi(x)\})=0.$  If  $\Gamma\actson(X,\mu)$ by measure-preserving transformations then we let  $\alpha_{g}\colon X\to X$ for $g\in \Gamma$ be the element of $[\R_{\Gamma\actson (X,\mu)}]$ defined by $\alpha_{g}(x)=gx.$ We define a metric on $[[\R]]$ by}
\[d_{[[\R]]}(\phi,\psi)^{2}=\mu(\dom(\phi)\Delta\dom(\psi))+2\mu(\{x\in \dom(\phi)\cap \dom(\psi):\psi(x)\ne \phi(x)\}).\]
\end{definition}
	For the next definition, we need to note the following example. For $n\in \NN,$ let $R_{n}$ be the equivalence relation on $\{1,\dots,n\}$ declaring all points to be equivalent. We may view $S_{n}\subseteq \{1,\dots,n\}$ since both are (partially defined) functions on $\{1,\dots,n\},$ indeed $[\R_{n}]=S_{n}.$

\begin{definition} \emph{Let $\Gamma$ be a countable discrete sofic group with sofic approximation $\Sigma=(\sigma_{i}\colon \Gamma\to S_{d_{i}}).$ Let $\Gamma$ have a free, measure-preserving action on a standard probability space $(X,\mu).$ A} sofic approximation of $\R_{\Gamma\actson (X,\mu)}$ extending $\Sigma$\emph{ is a sequence of maps $\Sigma'=(\rho_{i}\colon [[\R]]\to [[\R_{n}]])$ such that}
\[\rho_{i}(u_{g})=\sigma_{i}(g),\mbox{ \emph{for all $g\in \Gamma$}},\]
\[\mbox{\emph{ for all $A\subseteq X$ measurable, there exists $A_{i}\subseteq\{1,\dots,d_{i}\}$ such that $\rho_{i}(\id_{A})=\id_{A_{i}}$}},\]
\[\frac{|\{1\leq j\leq d_{i}:j\in \dom(\rho_{i}(\phi)),\rho_{i}(\phi)(j)=j\}|}{d_{i}}\to \mu(\{x\in \dom(\phi):\phi(x)=x\}),\]
\[d_{[[\R_{d_{i}}]]}(\rho_{i}(\phi^{-1}),\rho_{i}(\phi)^{-1})\to 0,\mbox{\emph{for all $\phi\in [[\R_{n}]]$}},\]
\[d_{[[\R_{d_{i}}]]}(\rho_{i}(\phi\psi),\rho_{i}(\phi)\rho_{i}(\psi))\to 0,\mbox{\emph{for all $\phi,\psi\in [[R_{n}]]$}}.\]
\end{definition}
The notion of a sofic equivalence relation is due to Elek-Lippner in \cite{ElekLip}.

\begin{lemma} Let $\Gamma$ be a countable discrete sofic group with a sofic approximation $\Sigma.$ Let $\Gamma\actson (X,\mu)$ be a free, ergodic, measure-preserving action on a standard probability space $(X,\mu)$ such that there is a sofic approximation (still denoted $\Sigma$) of $\R_{\Gamma\actson (X,\mu)}$ extending the sofic approximation of $\Gamma.$ Let $\Sigma=(\sigma_{i}\colon [[\R]]\to [[\R_{d_{i}}]]).$ Fix $\phi\in [[\R]],$ and $\eta>0.$ Then there are $F\subseteq \Gamma$ finite, $m\in \NN,$ $\delta>0$ and $C_{i}\subseteq \{1,\dots,d_{i}\}$ with $|C_{i}|\geq (1-\eta)d_{i}$ with the following property. For the trivial representation of $\Gamma$ on $\CC,$ and $T\in \Hom_{\Sigma,p}((1,0,0,\dots),F,m,\delta,\sigma_{i})$ with $\xi=T(1)$ we have
\[\|\sigma_{i}(\phi)\xi-\sigma_{i}(\id_{\ran(\phi)})\xi\|_{l^{p}(C_{i})}<\eta,\]
for all large $i.$
\end{lemma}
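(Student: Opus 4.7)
The plan is to exploit freeness of $\Gamma\actson(X,\mu)$ to decompose $\phi$ into a countable disjoint union of group-translates $g|_{A_g}$, truncate to a finite index set $F\subseteq\Gamma$, transfer the truncated sum through the sofic model, and then on a cofinite subset $C_i\subseteq\{1,\ldots,d_i\}$ reduce the bound to the almost-invariance $\|\sigma_i(g)\xi-\xi\|_p<\delta$ of $\xi=T(1)$ for $g\in F$.

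By freeness, for $\overline{\mu}$-almost every $(x,y)\in\R$ there is a unique $g\in\Gamma$ with $y=gx$, yielding a countable measurable partition $\dom(\phi)=\bigsqcup_{g\in\Gamma}A_g$, where $A_g=\{x\in\dom(\phi):\phi(x)=gx\}$. Given $\eta'>0$ (a small constant multiple of $\eta$, to be specified), pick $F\subseteq\Gamma$ finite with $\mu(B)<\eta'$ for $B=\dom(\phi)\setminus\bigsqcup_{g\in F}A_g$. Writing $p_A=v_{\id_A}$, we have in $L(\R)$
\[v_\phi=\sum_{g\in F}u_g p_{A_g}+v_{\phi|_B}, \qquad p_{\ran(\phi)}=\sum_{g\in F}u_g p_{A_g}u_g^*+p_{\ran(\phi|_B)}.\]
Let $A_{g,i},R_i\subseteq\{1,\ldots,d_i\}$ satisfy $\sigma_i(p_{A_g})=\chi_{A_{g,i}}$, $\sigma_i(p_{\ran(\phi)})=\chi_{R_i}$, and set $W_i=\sum_{g\in F}\sigma_i(g)\chi_{A_{g,i}}$ and $S_i=\bigsqcup_{g\in F}\sigma_i(g)(A_{g,i})$. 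The $*$-polynomial approximation property, combined with $\sigma_i(u_g)=\sigma_i(g)$, yields $\|\sigma_i(v_\phi)-W_i\|_2\to\mu(B)^{1/2}$ and $\|\chi_{R_i}-\chi_{S_i}\|_2\to\mu(B)^{1/2}$, and the $A_{g,i}$ (respectively $\sigma_i(g)(A_{g,i})$) are pairwise almost disjoint for $g\in F$. Because $\sigma_i(v_\phi)\in [[R_{d_i}]]$ and $W_i$ is a partial bijection off a set of size $o(d_i)$, the elementary fact that two $\{0,1\}$-valued matrices with at most one nonzero per row and per column differ on at most $O(\|\cdot\|_2^2 d_i)$ rows and columns produces $C_i\subseteq\{1,\ldots,d_i\}$ with $|C_i|/d_i\geq 1-O(\mu(B))$ such that for each $k\in C_i$: (i) $\chi_{R_i}(k)=\chi_{S_i}(k)$; (ii) if $k\in S_i$, then $k$ lies in $\sigma_i(g)(A_{g,i})$ for a unique $g\in F$; (iii) $\sigma_i(v_\phi)^{-1}(k)=W_i^{-1}(k)=\sigma_i(g)^{-1}(k)$ for that $g$. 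Choosing $\eta'$ a sufficiently small constant multiple of $\eta$ secures $|C_i|\geq(1-\eta)d_i$ for all large $i$.

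With the $F$ above and $m=1$, fix $\delta>0$ and any $T\in\Hom_{\Sigma,p}(\{1\},F,1,\delta,\sigma_i)$, so $\xi=T(1)$ obeys $\|\sigma_i(g)\xi-\xi\|_p<\delta$ for every $g\in F$. For $k\in C_i$: either $k\notin S_i=R_i$ (so both $(\sigma_i(\phi)\xi)(k)$ and $(\sigma_i(\id_{\ran(\phi)})\xi)(k)$ vanish), or $k\in\sigma_i(g)(A_{g,i})$ for the unique $g\in F$, in which case
\[(\sigma_i(\phi)\xi)(k)-(\sigma_i(\id_{\ran(\phi)})\xi)(k)=\xi(\sigma_i(g)^{-1}k)-\xi(k)=(\sigma_i(g)\xi-\xi)(k).\]
Summing in $l^p$ over the disjoint pieces gives
\[\|\sigma_i(\phi)\xi-\sigma_i(\id_{\ran(\phi)})\xi\|_{l^p(C_i)}^p\leq \sum_{g\in F}\|\sigma_i(g)\xi-\xi\|_p^p<|F|\delta^p,\]
which is $<\eta^p$ provided $\delta<\eta/|F|^{1/p}$ (a sup replaces the sum when $p=\infty$, requiring only $\delta<\eta$). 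The main obstacle is the middle step: assembling a single $C_i$ on which several $\|\cdot\|_2$-approximations become simultaneously exact. This is routine given that $\sigma_i$ sends $[[\R]]$ into $[[R_{d_i}]]$, so we are comparing partial bijections rather than arbitrary matrices, but it demands careful bookkeeping to keep $|C_i^c|/d_i$ of order $\mu(B)$.
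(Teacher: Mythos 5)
Your proof is correct and follows essentially the same route as the paper's: decompose $\phi$ into group translates over a measurable partition, truncate to a finite $F\subseteq\Gamma$, pass to a cofinite set $C_{i}$ on which the sofic images agree exactly with the truncated sum, and reduce the estimate to the $F$-almost-invariance of $\xi.$ The only differences are cosmetic --- you partition $\dom(\phi)$ rather than $\ran(\phi),$ and you spell out the partial-bijection counting argument for the existence of $C_{i},$ which the paper simply asserts.
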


\begin{proof}

	Let $\{A_{g}:g\in \Gamma\}$ be a partition of $\ran(\phi)$ with
\[\phi=\sum_{g\in \Gamma}\id_{A_{g}}\alpha_{g},\]
with the sum converging $d_{[[\R]]}.$ Choose $F\subseteq \Gamma$ finite such that
\[d_{[[\R]]}\left(\phi,\sum_{g\in F}\id_{A_{g}}\alpha_{g}\right)<\eta.\]
For $\xi\in l^{p}(d_{i}),\psi\in [[R_{d_{i}}]],$ we use
\[(\psi\xi)(j)=\chi_{\ran(\psi)}(j)\xi(\psi^{-1}(j)).\]
For $A\subseteq\{1,\dots,d_{i}\}$ we  use $\chi_{A}$ for the operator on $l^{p}(d_{i})$ given by multiplication by $A.$ By soficity, for all large $i$ we may find a $C_{i}\subseteq\{1,\dots,d_{i}\}$ with $|C_{i}|\geq (1-2\eta)d_{i}$ and
\[\chi_{C_{i}}\sigma_{i}(\phi)=\sum_{g\in F}\chi_{C_{i}}\sigma_{i}(\id_{A_{g}})\sigma_{i}(g),\]
\[\chi_{C_{i}}\sigma_{i}(\id_{\ran(\phi)})=\sum_{g\in F}\chi_{C_{i}}\sigma_{i}(\id_{A_{g}}),\]
as operators on $l^{p}(d_{i}).$ Let $m\in \NN,$ and let $\delta>0$ be sufficently small in a manner to be determined later. For $T,\xi$ as in the statement of the lemma,
\[\chi_{C_{i}}\sigma_{i}(\phi)\xi=\sum_{g\in F}\chi_{C_{i}}\sigma_{i}(\id_{A_{g}})\sigma_{i}(g)\xi,\]
\[\chi_{C_{i}}\sigma_{i}(\id_{\ran(\phi)})\xi=\sum_{g\in F}\chi_{C_{i}}\sigma_{i}(\id_{A_{g}})\xi,\]
so
\[\|\sigma_{i}(\phi)\xi-\sigma_{i}(\id_{\ran(\phi)})\xi\|_{l^{p}(C_{i})}\leq |F|\delta.\]
So if $\delta<\frac{\eta}{|F|},$ our claim is proved.

\end{proof}

\begin{lemma} Let $\Gamma$ be a countably infinite discrete sofic group with sofic approximation $\Sigma.$ Then for the trivial representation of $\Gamma$ on $\CC,$ we have
\[\dim_{\Sigma,l^{p}}(\CC,\Gamma)=0.\]
\end{lemma}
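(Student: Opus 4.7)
The plan is to follow the hint from the introduction: pass to a sofic equivalence relation whose full group contains $\ZZ/n\ZZ$ for every $n$, and then let $n\to\infty$ in Corollary \ref{C:finitegroup}. Fix a free, ergodic, measure-preserving action of $\Gamma$ on a standard probability space $(X,\mu)$ whose orbit equivalence relation $\R$ admits a sofic approximation $\rho_{i}\colon L(\R)\to M_{d_{i}}(\CC)$ extending $\Sigma$; the Bernoulli action $\Gamma\actson(\{0,1\}^{\Gamma},\nu^{\otimes\Gamma})$ is the standard choice and is known to have this property whenever $\Gamma$ is sofic. Since $\R$ is ergodic and non-atomic (because $\Gamma$ is infinite and the action is free), for each integer $n\ge 2$ there is $\phi_{n}\in[\R]$ with $\phi_{n}^{n}=\id$ and $\mu(\{x:\phi_{n}^{k}(x)=x\})=0$ for $1\le k<n$: partition $X$ into $n$ measurable pieces of equal measure whose elements lie in common $\R$-orbits and cyclically permute them.

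Next, extend the partial morphism $\rho_{i}(v_{\phi_{n}})$ to an honest permutation $\widetilde{\sigma}_{i}(\phi_{n})\in S_{d_{i}}$ by choosing any completion on the (vanishingly small) complement of its domain. Using that $\rho_{i}$ respects $*$-polynomials up to $\|\cdot\|_{2}$-error and that $\tau(v_{\phi_{n}}^{k})=0$ for $1\le k<n$, one checks in a standard way that $\widetilde{\Sigma}^{(n)}=(\widetilde{\sigma}_{i}\colon\ZZ/n\ZZ\to S_{d_{i}})$ is a bona fide sofic approximation of $\ZZ/n\ZZ$ on the same sequence $d_{i}$.

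Given $\xi=T(1)$ with $T\in\Hom_{\Sigma,p}(\{1\},F,m,\delta,\sigma_{i})$ for the trivial action of $\Gamma$ on $\CC$, apply the preceding lemma successively to $\phi_{n},\phi_{n}^{2},\ldots,\phi_{n}^{n-1}$ with some small $\eta>0$: for suitable parameters $F',m',\delta'$ (depending on $n,\eta$) there is a set $C_{i}\subseteq\{1,\ldots,d_{i}\}$ of density at least $1-n\eta$ on which $\widetilde{\sigma}_{i}(\phi_{n})^{k}\xi$ agrees with $\xi$ to $l^{p}$-error $O(\eta)$ for each $0\le k<n$. Truncating $\xi$ to $\chi_{C_{i}}\xi$ alters its $l^{p}$-norm by $O(\eta)$ and produces a vector that is honestly almost $\ZZ/n\ZZ$-invariant with respect to $\widetilde{\Sigma}^{(n)}$. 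Consequently any $\varepsilon$-net for $\alpha_{S}(\Hom_{\ZZ/n\ZZ,p}(\{1\},\cdot,\cdot,\cdot,\widetilde{\sigma}_{i}))$ furnishes, after $O(\eta)$-perturbation, an $(\varepsilon+O(\eta))$-net for $\alpha_{S}(\Hom_{\Sigma,p}(\{1\},F',m',\delta',\sigma_{i}))$. Corollary \ref{C:finitegroup} then bounds the relevant $\varepsilon$-dimension from above by $\tfrac{d_{i}}{n}+o(d_{i})$, yielding $\dim_{\Sigma,l^{p}}(\CC,\Gamma)\le\tfrac{1}{n}$. Letting $n\to\infty$ completes the proof.

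The principal technical obstacle is making the last truncation argument clean: one must show that the passage from almost-invariance under the partial morphism $\rho_{i}(v_{\phi_{n}})$ on a set of density close to $1$ to honest almost-invariance under the permutation $\widetilde{\sigma}_{i}(\phi_{n})\in S_{d_{i}}$ is compatible with both the definition of $\varepsilon$-dimension and with $\Hom_{\ZZ/n\ZZ,p}$ as used in Corollary \ref{C:finitegroup}, and does not inflate $\varepsilon$ or the dimension count. The iteration over $k=1,\ldots,n-1$ costs $n\eta$ rather than $\eta$, which is harmless since $n$ is fixed while $\eta\to 0$.
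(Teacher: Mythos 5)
Your overall strategy is the paper's: pass to the Bernoulli action, extend $\Sigma$ to a sofic approximation of the orbit equivalence relation, find a free $\ZZ/n\ZZ$-action inside the full group, use the preceding lemma to make $\xi=T(1)$ almost invariant under $\sigma_{i}(\alpha)^{j}$ on a set $C_{i}$ of density close to $1$, and conclude $\dim_{\Sigma,l^{p}}(\CC,\Gamma)\leq \frac{1}{n}$. The only structural difference is cosmetic: the paper does not build a genuine sofic approximation of $\ZZ/n\ZZ$ and quote Corollary \ref{C:finitegroup}; it directly averages $\xi$ over the orbits of $\sigma_{i}(\alpha)$ (the sets $A_{i},\sigma_{i}(\alpha)(A_{i}),\ldots$), producing a vector determined by its restriction to a fundamental domain of density about $\frac{1}{n}$, which immediately exhibits a low-dimensional approximating subspace.

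There is, however, one step in your write-up that is false as stated and on which your net-transfer argument leans: ``truncating $\xi$ to $\chi_{C_{i}}\xi$ alters its $l^{p}$-norm by $O(\eta)$.'' A vector $\xi$ in the unit ball of $l^{p}(d_{i})$ can carry essentially all of its mass on the complement $C_{i}^{c}$ even when $|C_{i}^{c}|\leq n\eta d_{i}$ (e.g.\ $\xi$ concentrated at a few points of $C_{i}^{c}$ fixed by the relevant permutations), so $\|\xi-\chi_{C_{i}}\xi\|_{p}$ need not be small and the claimed $(\varepsilon+O(\eta))$-net does not follow. The repair is standard and is exactly what the paper does: do not perturb $\xi$ at all, but enlarge the approximating subspace by the full coordinate subspace $\chi_{C_{i}^{c}}\,l^{p}(d_{i})$ (more precisely $\chi_{D_{i}^{c}}\,l^{p}(d_{i})$ with $D_{i}$ the intersection of $C_{i}$ with the union of the translates of the fundamental domain). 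This does inflate the dimension count, but only by $|D_{i}^{c}|=O(n\eta)d_{i}$, giving $\dim_{\Sigma,l^{p}}(\CC,\Gamma)\leq\frac{1}{n}+O(n\eta)$, which still tends to $\frac{1}{n}$ as $\eta\to 0$ with $n$ fixed. With that correction (and the routine verification that $\widetilde{\sigma}_{i}(\phi_{n})$ really is a sofic approximation of $\ZZ/n\ZZ$, converting $\|\cdot\|_{2}$-estimates on the partial permutations $\rho_{i}(v_{\phi_{n}^{k}})$ into Hamming estimates), your argument goes through.
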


\begin{proof} Let $\R$ be the equivalence relation induced by the Bernoulli action of $\Gamma$ on $(X,\mu)=(\{0,1\},u)^{\Gamma},$ $u$ being the uniform measure. Extend $\Sigma$ to a sofic approximation of $[[\R]],$ (this is essentially possible by \cite{Bow} Theorem 8.1, see also \cite{ElekLip} Proposition 7.1,\cite{DKP} Theorem 5.5, \cite{Pan} Theorem 2.1).  Let $S=(1,0,0,\dots).$ Since $\Gamma$ is an infinite group, by (\cite{KM} Corollary 7.6) we know that for all $n\in \NN$ there is a subequivalence relation $\R_{n}$ of $\R$ generated by a free, measure-preserving action of $\ZZ/n\ZZ$ on $(X,\mu).$ Let $\alpha\in [\R_{n}]$ generate the action of $\ZZ/n\ZZ$ on $(X,\mu).$ Fix $\eta>0.$ By the preceding lemma, we may choose a finite subset $F\subseteq \Gamma,\delta>0$ and subsets $C_{i}\subseteq \{1,\dots,d_{i}\}$ with $|C_{i}|\geq (1-d_{i})\eta$ such that if $T\in \Hom_{\Gamma}(S,F,1,\delta,\sigma_{i})$ and $\xi=T(1),$ then for all large $i$
\[\|\sigma_{i}(\alpha)^{j}\xi-\xi\|_{l^{p}(C_{i})}<\eta,\mbox{for $1\leq j\leq n-1$.}\]
We may assume that there are $A_{i}\subseteq \{1,\dots,d_{i}\}$ with $\frac{|A_{i}|}{d_{i}}\to \frac{1}{n}$ and 
\[\{\sigma_{i}(\alpha)^{j}(A_{i}):0\leq j\leq n-1\} \mbox{ are a disjoint family},\]
\[\sigma_{i}(\alpha)\big|_{\{1,\dots,d_{i}\}\setminus\bigcup_{j=0}^{n-1}\sigma_{i}(\alpha)^{j}(A_{i})}=\id.\]

	Let
\[\zeta=\sum_{j=1}^{n}\sigma_{i}(\alpha)^{j}\chi_{A_{i}}\xi=\sum_{j=1}^{n}\chi_{\sigma_{i}(\alpha)^{j}(A_{i})}\sigma_{i}(\alpha)^{j}\xi.\]
Set $D_{i}=C_{i}\cap \bigcup_{j=0}^{n-1}\sigma_{i}(\alpha)^{j}(A_{i}).$ Then
\[\chi_{D_{i}}\zeta-\chi_{D_{i}}\xi=\sum_{i=1}^{n}\chi_{D_{i}\cap \sigma_{i}(\alpha)^{j}(A_{i})}(\sigma_{i}(\alpha)^{j}\xi-\xi),\]
so
\[\|\chi_{D_{i}}\zeta-\chi_{D_{i}}\xi\|_{p}\leq \eta n.\]
Since $\alpha_{S}(T)(l)=0$ for all $l\in \NN,\l\geq 2$ and $T\in B(\CC,l^{p}(d_{i})),$ we may view $\alpha_{S}$ as a map into $l^{p}(d_{i}).$ Then
\begin{align*}
\alpha_{S}(\Hom_{\Gamma}(S,F,m,\delta,\sigma_{i}))\subseteq_{\eta n,\|\cdot\|_{p}}&\chi_{D_{i}}\left\{\sum_{j=1}^{n}\sigma_{i}(\alpha)^{j}f:f\in l^{p}(A_{i})\right\}\\
&+l^{p}(\{1,\dots,d_{i}\}\setminus D_{i}).
\end{align*}
As
\[\frac{|D_{i}|}{d_{i}}\to 1,\]
\[\frac{|A_{i}|}{d_{i}}\to \frac{1}{n}\]
we find that
\[\limsup_{(F,m,\delta)}\limsup_{i\to \infty}\frac{1}{d_{i}}d_{\eta n}(\alpha_{S}(\Hom_{\Gamma}(S,F,m,\delta,\sigma_{i})),\|\cdot\|_{p})\leq \frac{1}{n}.\]
Letting $\eta\to 0,$ and then $n\to \infty$ completes the proof.

\end{proof}

\begin{theorem} Let $\Gamma$ be a countably infinite sofic group with sofic approximation $\Sigma.$ Then for any representation of $\Gamma$ on a finite-dimensional vector space $V$ and for all $1\leq p<\infty,$
\[\dim_{\Sigma,l^{p}}(V,\Gamma)=0.\]

\end{theorem}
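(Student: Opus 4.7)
The plan is to carry out the three-case reduction outlined at the start of this section. The basic observation underpinning the reduction is that for any subgroup $H\leq \Gamma$, any almost $\Gamma$-equivariant map is in particular almost $H$-equivariant, and monotonicity of the space of $\Hom$'s in the finite subset $F$ of $\Gamma$ gives
\[\dim_{\Sigma,l^{p}}(V,\Gamma)\leq \dim_{\Sigma|_{H},l^{p}}(V,H),\]
where $\Sigma|_{H}$ is the restriction of the sofic approximation to $H$. Since $V$ is finite-dimensional, we may fix a basis of $V$ as the generating sequence $S$, so that $V_{F,m}=V$ whenever $e\in F$ and $m\geq \dim_{\CC}V$, making the above inequality immediate from the definition.

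If $\Gamma$ contains finite subgroups $\Gamma_{n}$ with $|\Gamma_{n}|\to \infty$, Corollary \ref{C:finitegroup} gives $\dim_{\Sigma|_{\Gamma_{n}},l^{p}}(V,\Gamma_{n})=\dim_{\CC}V/|\Gamma_{n}|\to 0$, so the conclusion follows. If $\Gamma$ contains an element $g$ of infinite order, we take $H=\langle g\rangle\cong \ZZ$; Proposition \ref{P:Zcase}, combined with the corollary showing that $l^{p}$-dimension is independent of the choice of sofic approximation for amenable groups, gives $\dim_{\Sigma|_{H},l^{p}}(V,\ZZ)=0$.

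The remaining case is when every element of $\Gamma$ has finite order and the finite subgroups of $\Gamma$ are of uniformly bounded size; in particular there is a fixed $N$ with $g^{N}=e$ for all $g\in \Gamma$. Here the key input is Burnside's theorem on linear groups of bounded exponent: a subgroup of $GL_{n}(\CC)$ of finite exponent is finite. Applying this to the image of the representation $\pi\colon \Gamma\to GL(V)$, we see that $\pi(\Gamma)$ is finite, so $\ker\pi$ has finite index in the infinite group $\Gamma$, hence is itself an infinite subgroup acting trivially on $V$. Viewing $V$ as a direct sum of $\dim_{\CC}V$ copies of the trivial $\ker\pi$-representation $\CC$ and iterating Property $2$ from the introduction, we obtain
\[\dim_{\Sigma|_{\ker\pi},l^{p}}(V,\ker\pi)\leq (\dim_{\CC}V)\,\dim_{\Sigma|_{\ker\pi},l^{p}}(\CC,\ker\pi),\]
which vanishes by the preceding lemma applied to the infinite sofic group $\ker\pi$ with its restricted sofic approximation.

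The main obstacle is this third case: without Burnside's finiteness input, one has no direct way of extracting an infinite trivially-acting subgroup from a torsion group of bounded exponent, and the other two cases leave this alternative untreated. Everything else is a direct bookkeeping exercise using results already established in this section.
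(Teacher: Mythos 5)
Your proposal is correct, and its overall skeleton (restrict to a subgroup via monotonicity of $\Hom_{\Gamma}$ in $F$, then split into the three cases: unbounded finite subgroups, an element of infinite order, and bounded torsion) is exactly the reduction the paper carries out. The one place where you genuinely diverge is the third case. The paper first unitarizes the representation by averaging, then uses compactness of $\overline{\pi(\Gamma)}\subseteq U(V)$ to produce infinitely many distinct $g_{n}$ with $\|\pi(g_{n})-1\|<\varepsilon$, and finally observes that a unitary of order at most $M$ within $\varepsilon$ of $1$ must equal $1$ (its eigenvalues are roots of unity of bounded order near $1$); the subgroup generated by the $g_{n}$ is then infinite and acts trivially. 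You instead note that bounded torsion forces bounded exponent and invoke Burnside's theorem that a subgroup of $GL(V)$ of finite exponent is finite, so that $\ker\pi$ is an infinite (indeed finite-index) trivially acting subgroup. Both arguments feed into the same endgame: the preceding Lemma giving $\dim_{\Sigma,l^{p}}(\CC,\Lambda)=0$ for an infinite sofic $\Lambda$, plus subadditivity over a composition series of $V$ by trivial subrepresentations. Your route is cleaner in that it avoids the unitarization step entirely and produces the whole kernel rather than just some infinite trivially acting subgroup, at the cost of importing a nontrivial classical theorem; the paper's version is self-contained and elementary, essentially reproving the special case of Burnside it needs. Both correctly rely on the fact that restricting a sofic approximation to a subgroup yields a sofic approximation of that subgroup, and on the (implicit in the paper, explicit in your write-up) use of the independence of $\dim_{\Sigma,l^{p}}$ from the choice of sofic approximation for $\ZZ$ when applying Proposition \ref{P:Zcase} to $\Sigma|_{\langle g\rangle}$.
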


\begin{proof} Let $\Sigma=(\sigma_{i}\colon\Gamma\to S_{d_{i}}).$ We first prove that the Theorem is true when $\Gamma=\ZZ.$  For this, consider the sofic approximation
\[\sigma'_{i}\colon \ZZ\to \Sym(\ZZ/n\ZZ))\]
defined as in Proposition \ref{P:Zcase}. Consider $\Sigma'=(\sigma'_{d_{i}}\colon \ZZ\to \Sym(\ZZ/d_{i}\ZZ)).$ Applying Proposition \ref{P:Zcase} and Corollary \ref{C:soficindependenceofLP},
\[\dim_{\Sigma,l^{p}}(V,\ZZ)=\dim_{\Sigma',l^{p}}(V,\ZZ)\leq 0.\]
 This prove the case $\Gamma=\ZZ.$

Similary Corollary \ref{C:finitegroup}  implies that if $\Lambda$ is a finite subgroup of $\Gamma$ then
\[\dim_{\Sigma,l^{p}}(V,\Gamma)\leq \dim_{\Sigma,l^{p}}(V,\Lambda)=\frac{\dim_{\CC}(V)}{|\Lambda|}.\]
Since dimension increases under restricting the action to a subgroup, we see that the Theorem holds if
\[\{|\Lambda|:\Lambda \mbox{ is a finite subgroup of } \Gamma\}\]
is unbounbed.

	Again because dimension increases under passing to subgroups, we may assume that
\[\{|\Lambda|:\Lambda \mbox{ is a finite subgroup of } \Gamma\}\]
is bounded and that every element of $\Gamma$ has finite order. As in Proposition \ref{P:Zcase} we may assume that $V$ is a Hilbert space and $\Gamma$ acts by unitaries. Let $M$ be greater than $|\Lambda|$ for any finite subgroup of $\Gamma.$ Choose $\varepsilon>0$ such that if $U$ is a unitary on a Hilbert space and
\[\|U-1\|<\varepsilon,\]
then $U^{M}\ne 1$ unless $U=1.$ Let $\pi\colon \Gamma\to U(V)$ be the homomorphism induced by the action of $\Gamma.$  By finite-dimensionality $\overline{\pi(\Gamma)}$ is compact, so we may find an infinite sequence $(g_{n})_{n=1}^{\infty}$ of distinct elements of $\Gamma$ with
\[\|\pi(g_{n})-1\|<\varepsilon.\]
If
\[\Lambda=\ip{g_{n}:n\in \NN},\]
our assumptions then imply that $\Lambda$ is an infinite subgroup of $\Gamma$ which acts trivially. Thus by the preceding lemma and subadditivity of dimension under exact sequences,
\[\dim_{\Sigma,l^{p}}(V,\Gamma)\leq \dim_{\Sigma,l^{p}}(V,\Lambda)=0.\]

\end{proof}

\section{A Complete Calculation In The Case of $\bigoplus_{j=1}^{n}L^{p}(L(\Gamma))q_{j}).$}\label{S:noncommLp}

	In this section, we show that if $\Gamma$ is $\R^{\omega}$-embeddable, $\Sigma$ is an embedding sequence and $q_{1},\dots,q_{n}\in \Proj(L(\Gamma)),$ then
\[\dim_{\Sigma,S^{p},\textnormal{mult}}\left(\bigoplus_{j=1}^{n}L^{p}(L(\Gamma),\tau)q_{j},\Gamma\right)=\underline{\dim}_{\Sigma,S^{p},\textnormal{mult}}\left(\bigoplus_{j=1}^{n}L^{p}(L(\Gamma),\tau)q_{j},\Gamma\right)=\]
\[\sum_{j=1}^{n}\tau(q_{j})\]
where $\tau$ is the group trace.

		We shall frequently use functional calculus throughout the proofs. For notation, if $T$ is a closed, densely-defined operator on a Hilbert space $H,$ then $|T|=(T^{*}T)^{1/2}.$ We use $\{u_{g}:g\in \Gamma\}$ for the canonical unitaries generating $L(\Gamma).$ We use $\tr$ for the linear functional on $M_{n}(\CC)$ equal to $\frac{1}{n}\Tr,$ and for $A\in M_{n}(\CC),$ we use $\|A\|_{p}^{p}=\tr(|A|^{p}).$ As before $\|A\|_{\infty}$ is defined to be the operator norm. We introduce some background. For proofs of the following facts see \cite{TakesakiII} Chapter IX.2.

\begin{definition}\emph{ A subalgebra $M$ of $B(H)$ is a} von Neumann algebra \emph{ if it closed under adjoints, limits in the weak operator topology and contains the identity of $H.$ A} faithful normal tracial state on $M$ \emph{is a linear functional $\tau\colon M\to \CC$ with $\tau(xy)=\tau(yx),$ for $x,y\in M,$ $\tau(x^{*}x)\geq 0$ for all $x\in M,$ with equality if and only if $x=0$ and such that $\tau\big|_{\{x \in M:\|x\|_{\infty}\leq 1\}}$ is weak operator topology continuous. Here, as always, $\|x\|_{\infty}$ is the operator norm of $x.$}\end{definition}
	
	Let $M\subseteq B(H)$ is von Neumann algebra and $\tau\colon M\to \CC$ is a faithful normal tracial state. We say that a closed densely-defined operator $x$ on $H$ is affiliaed to $M$ if it commutes with all the unitaries in 
\[M'=\{T\in B(H):[T,x]=0\mbox{ for all $a\in M$}\}.\]
 For $1\leq p<\infty$ we define
\[L^{p}(M,\tau)\]
to be all closed densely-defined operators $x$ on $H$ affiliated to $M$  such that if
\[|x|=\int_{[0,\infty)}t\,dE(t)\]
is the spectral decomposition of $x,$ then
\[\|x\|_{p}^{p}=\int_{[0,\infty)}t^{p}\,d\tau\circ E(t)<\infty.\]
Furthermore, $\|x\|_{p}$ is a norm on $L^{p}(M,\tau)$ which makes $L^{p}(M,\tau)$ into a Banach space, with the sum being the closure of the operator $x+y$ with domain $\dom(x)\cap \dom(y).$  We have the inequalities
\[\|xy\|_{r}\leq \|x\|_{p}\|y\|_{q}\]
if
\[\frac{1}{r}=\frac{1}{p}+\frac{1}{q}.\]
Here we are using $xy$ for the closure of the (densely-defined) operator with domain $y^{-1}(\dom(x))$ and defined by $xy(\xi)=x(y(\xi)).$

For any closed densely-defined operator $T$ on $H$ we will use $\dom(T),\ran(T)$ for its domain and range.
\begin{lemma}

(a) Let $n\in \NN.$ Suppose that $A,B\in M_{n}(\CC)$ are such that $|A|\leq |B|,$ then for all $\beta>0$
\[\tr(|A|^{\beta})\leq \tr(|B|^{\beta}).\]

(b) Suppose that $A,B\in M_{n}(\CC)$ and $Q$ is an orthogonal projection in $M_{n}(\CC).$ Fix $1\leq p<\infty.$ Suppose that $\delta,\eta>0$ are such that
\[\|(A-1)B\|_{p}<\delta,\|A-Q\|_{p}<\eta.\]
	Then
\[\|B-\chi_{(0,\sqrt{\delta})}(|A-1|)B\|_{p}<\sqrt{\delta},\]
and
\[\tr(\chi_{(0,\sqrt{\delta})}(|A-1|))\leq \tr(Q)+\left(\frac{\eta}{1-\sqrt{\delta}}\right)^{p}.\]

\end{lemma}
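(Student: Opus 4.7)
The plan is to establish part (a) by a direct eigenvalue comparison and to split part (b) into two independent arguments: a pseudo-inverse factorization for the first inequality, and a kernel-counting/min--max argument for the trace bound. For part (a), both $|A|$ and $|B|$ are positive self-adjoint with $|A|\leq|B|$, so the Courant--Fischer min--max principle gives $\lambda_k(|A|)\leq\lambda_k(|B|)$ for every $k$ (eigenvalues listed in decreasing order). Since $t\mapsto t^\beta$ is monotone on $[0,\infty)$, we obtain $\lambda_k(|A|)^\beta\leq\lambda_k(|B|)^\beta$, and summing yields $\tr(|A|^\beta)\leq\tr(|B|^\beta)$.

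For the first inequality in part (b), I would read the spectral projection as $P=\chi_{[0,\sqrt{\delta})}(|A-1|)$ (the half-open interval must include $0$, so the paper's $(0,\sqrt{\delta})$ appears to be a typo). Then $I-P=\chi_{[\sqrt{\delta},\infty)}(|A-1|)$, and on its range the pseudo-inverse $|A-1|^{-1}(I-P)$ has operator norm at most $1/\sqrt{\delta}$. The factorization
\[(I-P)B=\bigl((I-P)|A-1|^{-1}\bigr)\bigl(|A-1|B\bigr),\]
combined with $\|XY\|_p\leq\|X\|_\infty\|Y\|_p$ and the polar-decomposition identity $\||A-1|B\|_p=\|(A-1)B\|_p$, then yields $\|(I-P)B\|_p<\sqrt{\delta}$.

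For the trace bound, define the linear map $f\colon\ran(P)\to\ran(Q)$ by $f(\xi)=QA\xi$. Since $\|(A-I)P\|_\infty<\sqrt{\delta}$, each $\xi\in\ran(P)$ satisfies $\|A\xi\|_2\geq(1-\sqrt{\delta})\|\xi\|_2$, and for $\xi\in\ker f$ we have $A\xi=(I-Q)A\xi$, so $\|(I-Q)A\xi\|_2\geq(1-\sqrt{\delta})\|\xi\|_2$. Setting $T=(I-Q)AP=(I-Q)(A-Q)P$ so that $\|T\|_p\leq\|A-Q\|_p<\eta$, the fact that $T$ is bounded below by $1-\sqrt{\delta}$ on the $k$-dimensional subspace $\ker f$ forces $\sigma_k(T)\geq 1-\sqrt{\delta}$ by the min--max characterization of singular values, yielding
\[\eta^p>\|T\|_p^p=\tfrac{1}{n}\sum_j\sigma_j(T)^p\geq\tfrac{k}{n}(1-\sqrt{\delta})^p,\]
i.e.\ $k/n<(\eta/(1-\sqrt{\delta}))^p$. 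Combining rank--nullity $\rank(P)=\rank(f)+k$ with $\rank(f)\leq\rank(Q)$ and dividing by $n$ produces the claimed bound. The main bookkeeping subtlety lies in the careful interaction of the normalized trace with the Courant--Fischer bound on an individual singular value; the min--max step is the only genuinely nontrivial ingredient in the argument.
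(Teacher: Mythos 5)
Your proof is correct, and your reading of the spectral projection as $\chi_{[0,\sqrt{\delta})}(|A-1|)$ is the right one: with the literal $\chi_{(0,\sqrt{\delta})}$ the first inequality of (b) already fails for $A=1$ and $B\ne 0$, and the paper itself silently identifies $1-\chi_{(0,\sqrt{\delta})}(|A-1|)$ with $\chi_{[\sqrt{\delta},\infty)}(|A-1|)$. Substantively your argument coincides with the paper's, with different bookkeeping at two points. For (a) the paper integrates the distribution function via $\tr(T^{\beta})=\beta\int_0^\infty t^{\beta-1}\tr(\chi_{(t,\infty)}(T))\,dt$ and compares $\tr(\chi_{(t,\infty)}(|A|))$ with $\tr(\chi_{(t,\infty)}(|B|))$ by a trivial-intersection argument; your Weyl/min--max comparison of ordered eigenvalues is the same count. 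The first estimate of (b) is identical in substance: the paper uses $\chi_{[\sqrt{\delta},\infty)}(|A-1|)\leq \delta^{-1}|A-1|^{2}$ together with (a), where you insert the bounded pseudo-inverse and H\"older. For the trace bound the paper shows that $\ran(\chi_{(0,\sqrt{\delta})}(|A-1|))$, $\ran(1-Q)$ and $\ran(\chi_{[0,1-\sqrt{\delta}]}(|A-Q|))$ intersect trivially and then applies Chebyshev in the form $\chi_{(1-\sqrt{\delta},\infty)}(|A-Q|)\leq (1-\sqrt{\delta})^{-p}|A-Q|^{p}$; your rank--nullity decomposition of $\ran(P)$ under $\xi\mapsto QA\xi$ plus the singular-value bound $\sigma_{k}\bigl((1-Q)(A-Q)P\bigr)\geq 1-\sqrt{\delta}$ is the same dimension count, with the Chebyshev step absorbed into $\eta^{p}>\frac{1}{n}\sum_{j}\sigma_{j}(T)^{p}\geq \frac{k}{n}(1-\sqrt{\delta})^{p}$. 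Either packaging works; yours has the mild advantage of proving the bound for the larger projection $\chi_{[0,\sqrt{\delta})}(|A-1|)$, which is the version actually needed for the first half of (b) to hold.
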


\begin{proof} We first make the following preliminary observation: if $P,Q$ are orthogonal projections in $M_{n}(\CC)$ with
\[P\CC^{n}\cap Q\CC^{n}=\{0\},\]
then
\[\tr(P)\leq 1-\tr(Q).\]
This follows directly from the fact that $1-Q$ is injective on $P\CC^{n}.$

(a) First note that
\[\tr(T^{\alpha})=\alpha\int_{0}^{\infty}t^{\alpha-1}\tr(\chi_{(t,\infty)}(T))\,dt\]
if $T\geq 0.$ Suppose that $0\leq T\leq S$ and
\[\xi\in \chi_{(t,\infty)}(T)(\CC^{n})\cap \chi_{[0,t]}(S)(\CC^{n})\]
with $\xi\ne 0,$ then
\[t\|\xi\|^{2}<\ip{T\xi,\xi}\leq \ip{S\xi,\xi}\leq t\|\xi\|^{2},\]
which is a contradiction. Hence
\[\chi_{(t,\infty)}(T)(\CC^{n})\cap \chi_{[0,t]}(S)(\CC^{n})=\{0\},\]
so the above integral formula and our preliminary observation prove $(a).$

(b) Note that
\begin{align*}
|\chi_{[\sqrt{\delta},\infty)}(|A-1|)B|^{2}&=B^{*}\chi_{[\sqrt{\delta},\infty)}(|A-1|)B\\
&\leq \frac{1}{\delta}B^{*}|A-1|^{2}B=\left|\frac{1}{\sqrt{\delta}}(A-1)B\right|^{2}.
\end{align*}
So by $(a)$
\[\|B-\chi_{(0,\sqrt{\delta})}(|A-1|)B\|_{p}=\|\chi_{[\sqrt{\delta},\infty)}(|A-1|)B\|_{p}<\sqrt{\delta}.\]
Further if
\[\xi \in \chi_{(0,\sqrt{\delta})}(|A-1|)(\CC^{n})\cap (1-Q)(\CC^{n})\cap \chi_{[0,1-\sqrt{\delta}]}(|A-Q|)(\CC^{n})\]
is nonzero, then
\[(1-\sqrt{\delta})^{2}\|\xi\|^{2}\geq \ip{|A-Q|^{2}\xi,\xi}=\|A\xi\|^{2}>(1-\sqrt{\delta})^{2}\|\xi\|^{2},\]
which is a contradiction. Thus
\[\tr(\chi_{(0,\sqrt{\delta})}(|A-1|))\leq \tr(Q)+\tr(\chi_{(1-\sqrt{\delta},\infty)}(|A-Q|)).\]
Since
\[\chi_{(1-\sqrt{\delta},\infty)}(|A-Q|)\leq \frac{|A-Q|^{p}}{(1-\sqrt{\delta})^{p}},\]
we have that
\[\tr(\chi_{(1-\sqrt{\delta},\infty)}(|A-Q|))<\frac{\eta^{p}}{(1-\sqrt{\delta})^{p}}.\]

\end{proof}

\begin{proposition}\label{P:LPupbound} Let $\Gamma$ be an $\R^{\omega}$-embeddable group and $\Sigma$ an embedding sequence. Let $M=L(\Gamma)$ and $\tau$ the canonical group trace on $M.$ Then for all $1\leq p<\infty$ and for every $q_{1},\dots,q_{n}\in \Proj(M)$ we have
\[\dim_{\Sigma,S^{p},\mult}\left(\bigoplus_{j=1}^{n}L^{p}(M,\tau)q_{j},\Gamma\right)\leq \sum_{j=1}^{n}\tau(q_{j}).\]
\end{proposition}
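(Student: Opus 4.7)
The overall idea is to use $q_j = q_j^2$ to force $T(q_j)$ to lie approximately in the range of an embedded image of $q_j$ in $M_{d_i}(\CC)$, which will be (approximately) a projection of trace $\tau(q_j)$. Take as generating sequence $S=(a_j)_{j=1}^\infty$ with $a_j=q_j$ in the $j$-th summand for $j\leq n$ and $a_j = 0$ otherwise; this is dynamically generating since $\CC[\Gamma]q_j$ is $L^p$-dense in $L^p(L(\Gamma),\tau)q_j$. Fix $\varepsilon > 0$. By Kaplansky density in the unit ball of $L(\Gamma)$ (metrized there by $\|\cdot\|_2$) combined with the interpolation $\|x\|_p \leq \|x\|_\infty^{1-2/p}\|x\|_2^{2/p}$ for $p\geq 2$ (respectively $\|x\|_p \leq \|x\|_2$ for $p\leq 2$), choose for each $j$ a finite $F_j\subseteq\Gamma$ and scalars $(c_g^{(j)})_{g\in F_j}$ so that $p_j:=\sum_{g\in F_j}c_g^{(j)}u_g$ satisfies $\|p_j\|_\infty \leq 1$ and $\|p_j - q_j\|_p < \varepsilon$. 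Set $P_j^{(i)}:=\sum_{g\in F_j}c_g^{(j)}\sigma_i(g)\in M_{d_i}(\CC)$; its operator norm is bounded uniformly in $i$ by $C_j := \sum_g |c_g^{(j)}|$.

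\smallskip

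Given $T \in \Hom_\Gamma(S, F, m, \delta, \sigma_i)$ with $F\supseteq \bigcup_j F_j$ and $m\geq n$, the identity $q_j a_j = a_j$ yields $T(a_j) = T(p_j a_j) + T((q_j-p_j) a_j)$; the second summand has $S^p$-norm at most $\|(q_j-p_j) q_j\|_p < \varepsilon$, while $T(p_j a_j) = \sum_g c_g^{(j)} T(u_g a_j)$ lies within $C_j \delta$ of $P_j^{(i)} T(a_j)$ by almost-equivariance. Hence $\|(1-P_j^{(i)}) T(a_j)\|_p \leq \delta' := \varepsilon + C_j \delta$. By $\R^\omega$-embeddability, $\tr$-moments of $*$-words in $\{\sigma_i(g)\}$ converge to the corresponding $\tau$-moments in $\{u_g\}$, so
\[\|P_j^{(i)} - (P_j^{(i)})^*\|_2 \longrightarrow \|p_j - p_j^*\|_2 = O(\varepsilon), \qquad \|(P_j^{(i)})^2 - P_j^{(i)}\|_2 \longrightarrow \|p_j^2 - p_j\|_2 = O(\varepsilon);\]
interpolation against the uniform operator-norm bound upgrades these to $S^p$-estimates. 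Applying functional calculus to $\Re P_j^{(i)}$ produces a projection $Q_j^{(i)} := \chi_{[1/2,\infty)}(\Re P_j^{(i)})$ with $\|P_j^{(i)} - Q_j^{(i)}\|_p = O(\varepsilon) + o_i(1)$ and $\tr(Q_j^{(i)}) \to \tau(q_j)$ (using a portmanteau argument: the spectral distribution of $\Re P_j^{(i)}$ asymptotically concentrates near the spectrum $\{0,1\}$ of $q_j$, which does not charge the discontinuity point $1/2$). Now invoke Lemma 3.2(b) with $A = P_j^{(i)}$, $B = T(a_j)$, $Q = Q_j^{(i)}$ to obtain a projection $E_j^{(i)} := \chi_{(0,\sqrt{\delta'})}(|P_j^{(i)} - 1|)$ satisfying $\|T(a_j) - E_j^{(i)} T(a_j)\|_p < \sqrt{\delta'}$ and $\tr(E_j^{(i)}) \leq \tau(q_j) + O(\varepsilon^p) + o_i(1)$.

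\smallskip

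Consequently $\alpha_S(\Hom_\Gamma(S, F, m, \delta, \sigma_i))$ is $\rho$-approximated to within a constant multiple of $\sqrt{\delta'}$ by $\bigoplus_{j=1}^n E_j^{(i)} M_{d_i}(\CC) \subseteq l^\infty(\NN, M_{d_i}(\CC))$, a $\CC$-subspace of dimension $d_i^2 \sum_{j=1}^n \tr(E_j^{(i)})$. Dividing by $\dim_\CC V_i = d_i^2$, taking $\limsup_i$, and then letting $\delta, \varepsilon \to 0$ yields the bound $\sum_j \tau(q_j)$. The main obstacle is precisely the spectral step: turning the $L^p$-approximation of the projection $q_j$ by the trigonometric polynomial $p_j$ into a quantitative $S^p$-approximation of $P_j^{(i)}$ by an honest projection of trace approaching $\tau(q_j)$. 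Because $\sigma_i$ is only asymptotically a $*$-homomorphism and then only in $\|\cdot\|_2$, this step must combine moment convergence from the embedding sequence, the uniform operator-norm bound on $P_j^{(i)}$, and interpolation to $S^p$, followed by functional calculus on $\Re P_j^{(i)}$ to manufacture the projection demanded by Lemma 3.2(b).
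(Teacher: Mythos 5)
Your argument follows the same skeleton as the paper's proof: approximate $q_j$ in $\|\cdot\|_p$ by a finitely supported $p_j=\sum_g c^{(j)}_g u_g$, use $q_jq_j=q_j$ plus almost equivariance to make $\|(1-P^{(i)}_j)T(a_j)\|_p$ small for $P^{(i)}_j=\sum_g c^{(j)}_g\sigma_i(g)$, and feed this into Lemma 3.2(b) to trap $T(a_j)$, up to a small error, inside $E^{(i)}_jM_{d_i}(\CC)$ with $\tr(E^{(i)}_j)$ close to $\tau(q_j)$; the dimension count is then immediate. The one place you genuinely depart from the paper is in producing the comparison projection $Q$ required by the lemma: the paper extends $\sigma_i$ to an embedding sequence of the $*$-algebra generated by $\Gamma$ and $q$ (so it gets microstates $\sigma_i(q)$ with a \emph{uniform} operator-norm bound, hence honest projections $q_i$ with $\|q_i-\sigma_i(q)\|_p\to 0$ and $\tr(q_i)\to\tau(q)$ essentially for free), whereas you manufacture $Q^{(i)}_j=\chi_{[1/2,\infty)}(\Rea P^{(i)}_j)$ by functional calculus from the group microstates alone. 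Your route avoids importing the extension lemma from the prequel, and the portmanteau step does work once you replace ``does not charge $1/2$'' by the correct statement that the limiting spectral measure of $\Rea p_j$ gives mass $O(\varepsilon^2)$ to $[1/4,3/4]$ (use $\chi_{[1/4,3/4]}(t)\leq C\,t^2(1-t)^2$ together with $\|(\Rea p_j)^2-\Rea p_j\|_2=O(\varepsilon)$, which holds because $\|p_j\|_\infty\leq 1$).

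The one genuine gap is the claim $\|P^{(i)}_j-Q^{(i)}_j\|_p=O(\varepsilon)+o_i(1)$ when $2<p<\infty$. Interpolation gives $\|x\|_p\leq\|x\|_\infty^{1-2/p}\|x\|_2^{2/p}$, but the only available operator-norm bound on $P^{(i)}_j$ is $C_j=\sum_g|c^{(j)}_g|$, which is \emph{not} controlled in terms of $\varepsilon$: forcing $\|p_j-q_j\|_p<\varepsilon$ may force $C_j$ to blow up as $\varepsilon\to 0$, so interpolation only yields $O\bigl(C_j(\varepsilon)^{1-2/p}\varepsilon^{2/p}\bigr)$, which need not tend to $0$, and the hypothesis $\|A-Q\|_p<\eta$ of Lemma 3.2(b) with $\eta\to 0$ is not verified. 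The gap is local and repairable: in the proof of the trace estimate of Lemma 3.2(b), the only use of $\|A-Q\|_p$ is the Chebyshev bound $\tr\bigl(\chi_{(1-\sqrt{\delta},\infty)}(|A-Q|)\bigr)\leq\|A-Q\|_p^p\,(1-\sqrt{\delta})^{-p}$, and this works verbatim with exponent $2$. Your $2$-norm estimate $\|P^{(i)}_j-Q^{(i)}_j\|_2=O(\varepsilon)+o_i(1)$ \emph{is} uniform, since every moment computation can be referred back to $p_j$ and $\Rea p_j$, whose operator norms are at most $1$ (e.g.\ $\|P-\Rea P\|_2\to\tfrac12\|p_j-p_j^*\|_2\leq\varepsilon$ and $\|\Rea P-Q\|_2\leq 2\|(\Rea P)^2-\Rea P\|_2\to 2\|(\Rea p_j)^2-\Rea p_j\|_2=O(\varepsilon)$, using $|t-\chi_{[1/2,\infty)}(t)|\leq 2|t^2-t|$). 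This gives $\tr(E^{(i)}_j)\leq\tr(Q^{(i)}_j)+O(\varepsilon^2)+o_i(1)$ with no $p$-norm input, while the first half of the lemma's conclusion only needs $\|(P^{(i)}_j-1)T(a_j)\|_p$ small, which you did establish genuinely in $S^p$. With that substitution the proof closes.
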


\begin{proof} Let $\Sigma=(\sigma_{i}\colon\Gamma\to U(d_{i})).$ By subadditivity of dimension it is enough to show that if $q$ is a projection in $M,$ then
\[\dim_{\Sigma,S^{p},\mult}(L^{p}(M,\tau)q)=\tau(q).\]
Let $0<\varepsilon,\kappa<1/2.$ By \cite{Me} Lemma 5.5, we may extend $\sigma_{i}$ to (potentially nonlinear and nonmultiplicative) maps $\sigma_{i}\colon L(\Gamma)\to M_{d_{i}}(\CC)$ with
\[\sup_{i}\|\sigma_{i}(x)\|_{\infty}<\infty,\mbox{ for all $x\in L(\Gamma)$},\]
\[\tr(\sigma_{i}(x))\to \tau(x),\mbox{ for all $x\in L(\Gamma),$}\]
\[\|P(\sigma_{i}(x_{1}),\dots,\sigma_{i}(x_{n}))-\sigma_{i}(P(x_{1},\dots,x_{n}))\|_{2}\to 0,\]
 for all $x_{1},\dots,x_{n}\in L(\Gamma)$ and all $*$-polynomials in $n$-noncommuting variables.
	
	Let $p\in L(\Gamma)$ be any orthogonal projection. Then
\begin{equation}\label{E:truetautologyistrue}
\|\sigma_{i}(p)-\sigma_{i}(p)^{*}\sigma_{i}(p)\|_{2}\to 0
\end{equation}
\begin{equation}\label{E:truetautologyistrue2}
\|\sigma_{i}(p)^{*}\sigma_{i}(p)-(\sigma_{i}(p)^{*}\sigma_{i}(p))^{2}\|_{2}\to 0.
\end{equation}
By the triangle inequality and functional calculus,
\begin{align}\label{E:SDOLGHSALHGLSAHG}
\|\chi_{[1-\varepsilon,1+\varepsilon]}(\sigma_{i}(p)^{*}\sigma_{i}(p))-\sigma_{i}(p)^{*}\sigma_{i}(p)\|_{2}&\leq\|\chi_{[0,\infty)\setminus [1-\varepsilon,1+\varepsilon]}(\sigma_{i}(p)^{*}\sigma_{i}(p))\sigma_{i}(p)^{*}\sigma_{i}(p)\|_{2}\\ \nonumber
&+\|\chi_{[1-\varepsilon,1+\varepsilon]}(\sigma_{i}(p)^{*}\sigma_{i}(p))(1-\sigma_{i}(p)^{*}\sigma_{i}(p))\|_{2}.\\ \nonumber
\end{align}
For $1>\varepsilon>0,$ we have
\begin{align}\label{E:EasyInequality1}
\|\chi_{[0,\infty)\setminus [1-\varepsilon,1+\varepsilon]}(\sigma_{i}(p)^{*}\sigma_{i}(p))\sigma_{i}(p)^{*}\sigma_{i}(p)\|_{2}^{2}&=\tr(\chi_{[0,\infty)\setminus [1-\varepsilon,1+\varepsilon]}(\sigma_{i}(p)^{*}\sigma_{i}(p))(\sigma_{i}(p)^{*}\sigma_{i}(p))^{2})\\ \nonumber
&\leq \frac{1}{\varepsilon^{2}} \tr(|\sigma_{i}(p)^{*}\sigma_{i}(p)-(\sigma_{i}(p)^{*}\sigma_{i}(p))|^{2})\\ \nonumber
&=\frac{1}{\varepsilon^{2}}\|\sigma_{i}(p)^{*}\sigma_{i}(p)-(\sigma_{i}(p)^{*}\sigma_{i}(p))^{2}\|_{2}^{2}. \nonumber
\end{align}
Here we use functional calculus and the fact that if $t\in \RR$ and $1>\varepsilon>0,$ then
\[\chi_{[0,\infty)\setminus[1-\varepsilon,1+\varepsilon]}(t)t^{2}\leq \frac{1}{\varepsilon^{2}}|t-t^{2}|^{2}.\]
If $0<\varepsilon<1,$ we have
\begin{align}\label{E:EasyInequality2}
\|\chi_{[1-\varepsilon,1+\varepsilon]}(\sigma_{i}(p)^{*}\sigma_{i}(p))(1-\sigma_{i}(p)^{*}\sigma_{i}(p))\|_{2}^{2}&=\tr(\chi_{[1-\varepsilon,1+\varepsilon]}(\sigma_{i}(p)^{*}\sigma_{i}(p))(1-\sigma_{i}(p)^{*}\sigma_{i}(p))^{2})\\ \nonumber
&\leq \frac{1}{(1-\varepsilon)^{2}}\tr(|\sigma_{i}(p)^{*}\sigma_{i}(p)-(\sigma_{i}(p)^{*}\sigma_{i}(p))^{2}|^{2})\\ \nonumber
&=\frac{1}{(1-\varepsilon)^{2}}\|\sigma_{i}(p)^{*}\sigma_{i}(p))-(\sigma_{i}(p)^{*}\sigma_{i}(p))^{2}\|_{2}^{2}.
\end{align}
Here we use functional caclulus and the fact that if $t\in \RR,$ then
\[\chi_{[1-\varepsilon,1+\varepsilon]}(t)(1-t)^{2}\leq \frac{1}{(1-\varepsilon)^{2}}|t-t^{2}|^{2}.\]
Combining (\ref{E:truetautologyistrue}),(\ref{E:truetautologyistrue2}), (\ref{E:SDOLGHSALHGLSAHG}),(\ref{E:EasyInequality1}),(\ref{E:EasyInequality2}) we see that for all  $0<\varepsilon<1,$
\[\|\sigma_{i}(p)-\chi_{[1-\varepsilon,1+\varepsilon]}(\sigma_{i}(p)^{*}\sigma_{i}(p))\|_{2}\to 0.\]
 Applying the above estimates with $p=q,$ we see that we may replacing $\sigma_{i}(q)$ with $\chi_{[3/4,5/4]}(\sigma_{i}(q)^{*}\sigma_{i}(q)).$ Thus, we may assume that $\sigma_{i}(q)$ is an orthogonal projection for all $i.$

	Choose $f\in c_{c}(\Gamma)$ with
\[\left\|q-\sum_{s\in \Gamma}f(s)u_{s}\right\|_{p}<\kappa.\]
If $T\colon L^{p}(M,\tau)q\to L^{p}(M_{d_{i}}(\CC),\tr),$ define
\[\widetidle{T}(x)=T(xq).\]
Let $F$ be the support of $f,$ then if $m\in \NN,\kappa,\delta>0$ are sufficiently small  we have
\[\left\|\left(\sum_{s\in \Gamma}f(s)\sigma_{i}(s)-1\right)\widetilde{T}(q)\right\|_{p}<\varepsilon^{2},\]
for all $T\in \Hom_{\Gamma}(S,F,m,\delta,\sigma_{i}).$ Thus the proceeding lemma implies that if
\[e_{i}=\chi_{(\varepsilon,\infty)}\left(\left|\sum_{s\in \Gamma}f(s)\sigma_{i}(s)-1\right|\right),\]
then for all large $i$ we have
\[\|T(q)-e_{i}T(q)\|_{p}<\varepsilon,\]
\[\tr(e_{i})\leq \tr(\sigma_{i}(q))+2^{p}\kappa^{p}.\]
We identify $\alpha_{S}$ as a map into $L^{p}(M_{d_{i}}(\CC),\tr).$ Then
\[\alpha_{S}(\Hom_{\Gamma}(S,F,m,\delta,\sigma_{i}))\subseteq_{\varepsilon}\{e_{i}A:A\in L^{p}(M_{d_{i}}(\CC),\tr)\}.\]
So
\[\frac{1}{d_{i}}d_{\varepsilon}(\alpha_{S}(\Hom_{\Gamma}(S,F,m,\delta,\sigma_{i}),\|\cdot\|_{p})\leq\frac{1}{d_{i}}\Tr(e_{i})= \tr(e_{i})\leq \tr(\sigma_{i}(q))+2^{p}\kappa^{p}\]
and
\[\tr(\sigma_{i}(q))\to \tau(q)\]
as $i\to \infty.$
Taking the limit supremum over $(F,m,\delta)$ and then letting $\varepsilon\to 0$ proves that
\[\dim_{\Sigma,S^{p},\textnormal{mult}}(L^{p}(M,\tau),\Gamma)\leq \tau(q)+2^{p}\kappa^{p}.\]
Since $\kappa>0$ is arbitrary, this proves the claim.

\end{proof}

\begin{lemma}\label{L:volpack} Fix $1\leq p\leq \infty$ and a sequence of positive integers $d(n)\to \infty$. Let $\mu_{n}$ be the Lebesgue measure on $L^{p}(M_{d_(n)}(\CC),\frac{1}{d(n)}\Tr)$ normalized so that 
\[\mu_{n}(\Ball(L^{p}(M_{d(n)}(\CC),\frac{1}{d(n)}\Tr)))=1.\]
 Further, let $q_{n}\in \Proj(M_{d(n)}(\CC))$ be such that $\frac{1}{d(n)}\Tr(q_{n})$ converges to a positive real number. Then there is a function
\[\kappa\colon (0,1)\times (0,\infty)\to [0,1]\]
such that
\[\lim_{\varepsilon\to 0}\kappa(\alpha,\varepsilon)=1,\mbox{for all $\alpha>0$},\]
which satisfies the following property. For all measurable $A_{n}\subseteq \Ball(L^{p}(M_{d(n)}(\CC),\frac{1}{d(n)}\Tr))$ and $\alpha>0$ with
\[\limsup_{n\to \infty}\mu_{n}(A_{n})^{1/2d(n)^{2}}\geq \alpha,\]
we have for all $\varepsilon>0,$
\[\limsup_{n\to \infty}\frac{1}{d(n)\Tr(q_{n})}d_{\varepsilon}(A_{n}q_{n},\|\cdot\|_{p})\geq \kappa(\alpha,\varepsilon).\]

\end{lemma}

\begin{proof} Fix $1>\varepsilon>0$ and $\alpha>0.$ Suppose that $A_{n}$ is a sequence of meausrable subsets of $\Ball(L^{p}(M_{d(n)}(\CC),\frac{1}{d(n)}\Tr)$ with
\[\limsup_{n\to\infty}\mu_{n}(A_{n})^{1/2d(n)^{2}}\geq \alpha.\]
Suppose that $\kappa>0$ is such that
\[\limsup_{n\to \infty}\frac{1}{d(n)\Tr(q_{n})}d_{\varepsilon}(A_{n}q_{n},\|\cdot\|_{p})<\kappa.\]
We wish to get a lower bound on $\kappa.$ For all large $n$
\[d_{\varepsilon}(A_{n}q_{n},\|\cdot\|_{V_{n}})<d(n)\kappa\Tr(q_{n}) .\]
Let $W_{n}$ be a subspace of dimension at most $d(n)\kappa \Tr(q_{n})$ which $\varepsilon$-contains $A_{n}q_{n}.$ Thus
\[A_{n}q_{n}\subseteq (1+\varepsilon)\Ball(W_{n})+\varespilon \Ball\left(L^{p}(M_{d(n)}(\CC),\tr)q_{n}\right).\]
Let $S\subseteq (1+\varepsilon)\Ball(W_{n})$ be a maximal family of $\varepsilon$-separated vectors, i.e. for all $x,y\in S$ with $x\ne y$ we have $\|x-y\|\geq \varepsilon.$ Then the $\varepsilon/3$ balls centered at points in $S$ are disjoint and so by a volume computation
\[|S|\leq \left(\frac{3+3\varepsilon}{\varepsilon}\right)^{2\dim(W_{n})}.\]
By maximality, $S$ is $\varepsilon$-dense in $(1+\varepsilon)\Ball(W_{n}).$ Thus
\[A_{n}q_{n}\subseteq\bigcup_{x\in S}x+2\varepsilon\Ball\left(L^{p}(M_{d(n)}(\CC),\tr)q_{n}\right),\]
so
\[\vol(A_{n}q_{n})\leq 2^{2d(n)\tr(q_{n})}\varepsilon^{2d(n)^{2}\tr(q_{n})-2\dim(W_{n})}\left(3+3\varepsilon\right)^{2\dim(W_{n})}a_{p}(q_{n}),\]
where for $q\in \Proj(M_{d(n)}(\CC))$ we use
\[a_{p}(q)=\vol(\Ball\left(L^{p}(M_{d(n)}(\CC),\tr)q\right)).\]
Since $A_{n}\subseteq A_{n}q_{n}\times \Ball\left(L^{p}(M_{d(n)}(\CC),\tr\right),$ we have
\[\vol(A_{n})\leq \vol(A_{n}q_{n})a_{p}(1-q_{n})\leq \varepsilon^{2d(n)\Tr(q_{n})-2\dim(W_{n})}\left(3+3\varepsilon\right)^{2\dim(W_{n})}a_{p}(q_{n})a_{p}(1-q_{n}).\]
Thus
\begin{align*}
\alpha&\leq \limsup_{n\to\infty}\mu_{n}(A_{n})^{1/2d(n)^{2}}\\
&\leq \limsup_{n\to\infty}\varepsilon^{\tr(q_{n})-\tr(q_{n})\frac{\dim(W_{n})}{\Tr(q_{n})d(n)}}(3+3\varepsilon)^{\tr(q_{n})\frac{\dim(W_{n})}{\Tr(q_{n})d(n)}}\left(\frac{a_{p}(q_{n})a_{p}(1-q_{n})}{a_{p}(\id_{d(n)})}\right)^{1/2d(n)^{2}}.
\end{align*}
Let $q=\lim_{n\to\infty}\tr(q_{n}).$ Since
\[\limsup_{n\to \infty}\frac{1}{d(n)\Tr(q_{n})}\dim(W_{n})<\kappa\]
and $0<\varepsilon<1,$ we find that
\[\alpha\leq 6\cdot \varepsilon^{q(1-\kappa)}\limsup_{n\to\infty}\left(\frac{a_{p}(q_{n})a_{p}(1-q_{n})}{a_{p}(\id_{d(n)})}\right)^{1/2d(n)^{2}}.\]
	Hence it suffices to show that
\begin{equation}\label{E:TBPMFO}
\limsup_{n\to \infty}\left(\frac{a_{p}(q_{n})a_{p}(1-q_{n})}{a_{p}(\id_{d(n)})}\right)^{1/2d(n)^{2}}<\infty.
\end{equation}
It is well known that
\[a_{2}(q)=\frac{\pi^{\Tr(q)}}{\Tr(q)!}d(n)^{-d(n)}.\]
Since $\frac{1}{d(n)}\Tr(q_{n})$ converges to a positive real number, we may apply Stirling's formula and the above equation to see that there is a $M>1$ with
\[M^{-1}\leq \left(\frac{a_{2}(q_{n})a_{2}(1-q_{n})}{a_{2}(\id_{d(n)})}\right)^{1/2d(n)^{2}}<M.\]
We know by \cite{TJ} that there is a constant $C>0$ with
\begin{align*}
\left(\frac{a_{p}(q_{n})a_{p}(1-q_{n})}{a_{p}(\id_{d(n)})}\right)^{1/2d(n)^{2}}&\leq C\left(\frac{a_{p}(q_{n})a_{p}(1-q_{n})}{a_{2}(\id_{d(n)})}\right)^{1/2d(n)^{2}}\\
&\leq CM^{2}\left(\frac{a_{p}(q_{n})}{a_{2}(q_{n})}\right)^{1/2d(n)^{2}}\left(\frac{a_{p}(1-q_{n})}{a_{2}(1-q_{n})}\right)^{1/2d(n)^{2}}.
\end{align*}
	
	Let $p'$ be such that $\frac{1}{p}+\frac{1}{p'}=1.$ By the Santalo inequality (see \cite{Pis} Corollary 7.2), and the fact that $\frac{1}{d(n)}\Tr(q_{n})$ converges to a positive real number, we may find an $A>0$ with
\[\left(\frac{a_{p}(q_{n})}{a_{2}(q_{n})}\right)^{1/2d(n)^{2}}\left(\frac{a_{p}(1-q_{n})}{a_{2}(1-q_{n})}\right)^{1/2d(n)^{2}}\leq A\left(\frac{a_{2}(q_{n})}{a_{p'}(q_{n})}\right)^{1/2d(n)^{2}}\left(\frac{a_{2}(1-q_{n})}{a_{p'}(1-q_{n})}\right)^{1/2d(n)^{2}}\]
\[\leq AM^{2}\left(\frac{a_{2}(\id)}{a_{p'}(q_{n})a_{p'}(1-q_{n})}\right)^{1/2d(n)^{2}}.\]
Again by \cite{TJ}, we can find some $D>0$ with
\[\left(\frac{a_{2}(\id)}{a_{p'}(q_{n})a_{p'}(1-q_{n})}\right)^{1/2d(n)^{2}}\leq D\left(\frac{a_{p'}(\id)}{a_{p'}(q_{n})a_{p'}(1-q_{n})}\right)^{1/2d(n)^{2}}.\]
As
\[\Ball(L^{p'}(M_{d_{i}}(\CC),\tr))\subseteq \Ball(L^{p'}(M_{d_{i}}(\CC),\tr)q_{n})\times \Ball(L^{p'}(M_{d_{i}}(\CC),\tr)(1-q_{n})),\]
we find that
\[\left(\frac{a_{p'}(\id)}{a_{p'}(q_{n})a_{p'}(1-q_{n})}\right)^{1/2d(n)^{2}}\leq 1.\]
Putting all these inequalities together, we find that
\[\left(\frac{a_{p}(q_{n})a_{p}(1-q_{n})}{a_{p}(\id_{d(n)})}\right)^{1/2d(n)^{2}}\leq ACM^{4}D,\]
and this proves (\ref{E:TBPMFO}).

\end{proof}

To complete the calculation, it suffices to prove the following Theorem.

\begin{theorem}\label{T:LPcalc}
  Let $\Gamma$ be an $\R^{\omega}$-embeddable group and $\Sigma$ an embedding sequence. Let $M=L(\Gamma)$ and $\tau$ the canonical group trace on $M.$ Then for all $1\leq p<\infty$ and for every $q_{1},\dots,q_{n}\in \Proj(M)$ we have
\[\dim_{\Sigma,S^{p},\mult}\left(\bigoplus_{j=1}^{n}L^{p}(M,\tau)q_{j},\Gamma\right)=\underline{\dim}_{\Sigma,S^{p},\mult}\left(\bigoplus_{j=1}^{n}L^{p}(M,\tau)q_{j},\Gamma\right)\]
\[=\sum_{j=1}^{n}\tau(q_{j}).\]
\end{theorem}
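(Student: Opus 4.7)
The upper bound $\dim_{\Sigma,S^{p},\multi}(\bigoplus_{j=1}^{n}L^{p}(M,\tau)q_{j},\Gamma) \leq \sum_{j=1}^{n}\tau(q_{j})$ follows from Proposition \ref{P:LPupbound} combined with subadditivity of $\dim$ under exact sequences (Property 2). In fact, the proof of Proposition \ref{P:LPupbound} bounds $\dim$ rather than only $\underline{\dim}$, since the projection $e_{i}$ it produces satisfies $\Tr(e_{i}) \leq d_{i}(\tau(q) + O(\kappa^{p}))$ uniformly for all large $i$. It therefore suffices to prove the matching lower bound $\underline{\dim}_{\Sigma,S^{p},\multi}(\bigoplus_{j=1}^{n}L^{p}(M,\tau)q_{j},\Gamma) \geq \sum_{j=1}^{n}\tau(q_{j})$, which together with $\underline{\dim} \leq \dim$ yields the full chain of equalities.

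The plan is to inject a large-volume set of ``target vectors'' into the space of almost-equivariant maps and then invoke Lemma \ref{L:volpack}. Extend $\sigma_{i}$ to an approximate $*$-homomorphism $\widetilde{\sigma_{i}}\colon A \to M_{d_{i}}(\CC)$ on the $*$-algebra $A$ generated by $\Gamma$ and $q_{1},\ldots,q_{n}$, and pick projections $q_{i}^{(j)} \in M_{d_{i}}(\CC)$ with $\|q_{i}^{(j)} - \widetilde{\sigma_{i}}(q_{j})\|_{p} \to 0$. Take the generating sequence $S = (q_{1},\ldots,q_{n},0,0,\ldots)$. Fix $F \subseteq \Gamma$ finite, $m \in \NN$, $\delta > 0$, and a linear basis $\{v_{\alpha} = y_{\alpha}q_{j_{\alpha}}\}_{\alpha}$ of the finite-dimensional space $X_{F,m}$ in which each $q_{j}$ appears as a basis element. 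For each tuple $(x_{1},\ldots,x_{n})$ in the unit ball of $\bigoplus_{j=1}^{n}L^{p}(M_{d_{i}}(\CC),\tr)q_{i}^{(j)}$, define $T_{(x_{j})}\colon X_{F,m} \to L^{p}(M_{d_{i}}(\CC),\tr)$ by $T_{(x_{j})}(v_{\alpha}) = \widetilde{\sigma_{i}}(y_{\alpha})x_{j_{\alpha}}$ and extend linearly. Then $T_{(x_{j})}(q_{j}) = x_{j}$, so $\alpha_{S}(T_{(x_{j})})$ recovers the tuple $(x_{1},\ldots,x_{n})$ in its first $n$ coordinates.

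The main obstacle is showing that $T_{(x_{j})}$ lies in $\Hom_{\Gamma}(S,F,m,\delta,\sigma_{i})_{M}$ for some $M$ independent of $i$ and of $(x_{j})$, provided $i$ is large. The operator norm bound follows from uniform control of $\|\widetilde{\sigma_{i}}(y_{\alpha})\|_{\infty}$ across the fixed finite basis, a standard property of embedding sequences. For approximate equivariance, one expands $u_{s}y_{\alpha}q_{j_{\alpha}}$ in the chosen basis with fixed ($i$-independent) coefficients and compares $T_{(x_{j})}(u_{s}v_{\alpha})$ with $\sigma_{i}(s)T_{(x_{j})}(v_{\alpha})$; the discrepancy is controlled by combining the approximate multiplicativity of $\widetilde{\sigma_{i}}$ in $\|\cdot\|_{2}$ with the absorption identity $x_{j_{\alpha}} = q_{i}^{(j_{\alpha})}x_{j_{\alpha}} \approx \widetilde{\sigma_{i}}(q_{j_{\alpha}})x_{j_{\alpha}}$, which neutralizes the error arising from non-injectivity of $y \mapsto y q_{j_{\alpha}}$. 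Iterating over words of length at most $m$ in $F$ yields the $\delta$-bound for $i$ sufficiently large.

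Finally, applying Lemma \ref{L:volpack} with $A_{i} = \Ball(L^{p}(M_{d_{i}}(\CC),\tr))$ (so $\alpha = 1$) and $q_{i} = q_{i}^{(j)}$ yields $d_{\varepsilon}(\Ball(L^{p}(M_{d_{i}}(\CC),\tr)q_{i}^{(j)}),\|\cdot\|_{p}) \geq d_{i}\Tr(q_{i}^{(j)})\kappa(1,\varepsilon)$ for each $j$, where $\kappa(1,\varepsilon) \to 1$ as $\varepsilon \to 0$. A direct-sum volume estimate analogous to the one inside the proof of Lemma \ref{L:volpack}, applied now to $\prod_{j=1}^{n}\Ball(L^{p}(M_{d_{i}}(\CC),\tr)q_{i}^{(j)})$ viewed in its ambient $\sum_{j} d_{i}\Tr(q_{i}^{(j)})$-complex-dimensional subspace under the max product norm, then produces $d_{\varepsilon}(\alpha_{S}(\Hom_{\Gamma}(S,F,m,\delta,\sigma_{i})_{M}),\rho_{V_{i}}) \geq \sum_{j=1}^{n}d_{i}\Tr(q_{i}^{(j)})\kappa'(\varepsilon)$ with $\kappa'(\varepsilon) \to 1$. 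Dividing by $d_{i}^{2}$, and then taking $\liminf_{i}$, $\inf_{(F,m,\delta)}$, and $\sup_{\varepsilon > 0}$, delivers the lower bound $\sum_{j=1}^{n}\tau(q_{j})$, completing the proof.
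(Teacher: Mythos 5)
Your overall architecture matches the paper's: the upper bound comes from Proposition \ref{P:LPupbound} (and you are right that its proof actually controls the limit supremum, since the projections $e_{i}$ work for all large $i$), and the lower bound is obtained by injecting a large family of almost equivariant maps and feeding their images to Lemma \ref{L:volpack}. But your parametrization of that family --- tuples $(x_{1},\dots,x_{n})$ ranging over the full $\|\cdot\|_{p}$-unit balls of $L^{p}(M_{d_{i}}(\CC),\tr)q_{i}^{(j)}$ --- creates two genuine gaps. First, the uniform norm bound: to control $\|T_{(x_{j})}(\sum_{\alpha}c_{\alpha}v_{\alpha})\|_{p}$ you must pass through $\sum_{\alpha}|c_{\alpha}|$, and the constant comparing $\sum_{\alpha}|c_{\alpha}|$ with $\|\sum_{\alpha}c_{\alpha}v_{\alpha}\|_{p}$ blows up with $X_{F,m}$ (already for $q=1$ and $p=2$ it grows like $\sqrt{\dim X_{F,m}}$, since the $u_{g}$ are orthonormal). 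So you only place $T_{(x_{j})}$ in $\Hom_{\Gamma}(S,F,m,\delta,\sigma_{i})_{M(F,m)}$ with $M(F,m)\to\infty$; rescaling into $\Hom_{\Gamma}(\cdots)_{1}$ replaces $d_{\varepsilon}$ by $d_{M(F,m)\varepsilon}$, and because $\sup_{\varepsilon>0}$ sits \emph{outside} $\inf_{(F,m,\delta)}$ in the definition of $\underline{\dim}_{\Sigma}$, the bound $\kappa(M(F,m)\varepsilon)$ degenerates as $(F,m)$ grows and the lower bound evaporates. Second, the absorption step: the errors you must kill (e.g.\ $\widetilde{\sigma_{i}}(z)\widetilde{\sigma_{i}}(q_{j})-\widetilde{\sigma_{i}}(zq_{j})$ for $z$ with $zq_{j}=0$, which arises because distinct basis expansions of $u_{s}y_{\alpha}q_{j}$ agree only after right multiplication by $q_{j}$) are small only in $\|\cdot\|_{2}$ with bounded $\|\cdot\|_{\infty}$; multiplying such an error by a vector $x_{j}$ that is merely $\|\cdot\|_{p}$-bounded does not produce something $\|\cdot\|_{p}$-small, since H\"older would require control of $\|x_{j}\|_{s}$ for some $s>p$, which can be of size $d_{i}^{1/p-1/s}$ on the $L^{p}$ ball.

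Both problems are solved simultaneously by the paper's device: parametrize instead by matrices $A$ in the \emph{operator-norm} unit ball and set $T_{A}^{(j)}(u_{g}q_{j})=\sigma_{i}(g)\rho_{i}(q_{j})A$. The inequality $\|yA\|_{p}\leq\|y\|_{p}\|A\|_{\infty}$ transfers every estimate to the fixed finite-dimensional space $\Span\{u_{g}q_{j}:g\in E\}$, on whose unit ball $\|\sum_{g}c_{g}\sigma_{i}(g)\rho_{i}(q_{j})\|_{p}\to\|\sum_{g}c_{g}u_{g}q_{j}\|_{p}$ uniformly; this yields an operator-norm bound on $T_{A}^{(j)}$ independent of $(F,m,\delta)$ and an equivariance error bounded by $\|A\|_{\infty}\|\sigma_{i}(g_{1}\cdots g_{k})-\sigma_{i}(g_{1})\cdots\sigma_{i}(g_{k})\|_{p}$. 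The price is that the parametrizing set is the operator-norm ball, whose volume is exponentially smaller than that of the $L^{p}$ ball; this is precisely why Lemma \ref{L:volpack} is stated with a constant $\alpha$ possibly less than $1$, together with the boundedness below of $\left(\vol(\Ball(M_{d_{i}}(\CC),\|\cdot\|_{\infty}))/\vol(\Ball(M_{d_{i}}(\CC),\|\cdot\|_{p}))\right)^{1/2d_{i}^{2}}$. If you restrict your tuples $(x_{j})$ to operator-norm balls to repair the two gaps, you will have reproduced the paper's argument.
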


\begin{proof} We use the generating sequence $S=(q_{1},\dots,q_{n},0,\dots)$ to do the calculation. By Proposition $\ref{P:LPupbound},$ we have the upper bound. So it suffices to prove the lower bound. By Lemma 5.5 in \cite{Me}, we can find maps (not assumed to be linear) $\rho_{i}\colon L(\Gamma)\to M_{d_{i}}(\CC)$ such that
\[\rho_{i}(\lambda(g))=\sigma_{i}(g),\mbox{ for $g\in \Gamma$}\]
\[\sup_{i}\|\rho_{i}(x)\|_{\infty}<\infty,\mbox{ for all $x\in L(\Gamma),$}\]
\[\tr(\rho_{i}(x))\to \tau(x),\mbox{ for all $x\in L(\Gamma),$}\]
\[\|P(\rho_{i}(x_{1}),\dots,\rho_{i}(x_{n})-\rho_{i}(P(x_{1},\dots,x_{n}))\|_{2}\to 0,\]
for all $x_{1},\dots,x_{n}\in L(\Gamma),$ and all $*$-polynomials $P$ in $n$-noncommuting variables. As in Proposition \ref{P:LPupbound}, we may assume that $\rho_{i}(q_{j})$ is an orthogonal projection for all $i,j.$

	Fix $F\subseteq \Gamma$ finite $m\geq n$ in $\NN,$ $\delta>0.$ Let $E\subseteq \Gamma$ be a finite set which is sufficiently large in a manner to be determined later. Let
\[V_{E}^{(j)}=\Span\{u_{g}q:g\in E\}.\]
For $A\in M_{d_{i}}(\CC)$ and  $E\subseteq \Gamma$ finite define
\[T_{A}^{(j)}\left(\sum_{g\in E}c_{g}u_{g}q\right)=\sum_{g\in E}c_{g}\sigma_{i}(g)\rho_{i}(q_{j})A.\]
Note that
\[\left\|T_{A}^{(j)}\left(\sum_{g\in E}c_{g}u_{g}q\right)\right\|_{p}\leq \|A\|_{\infty}\left\|\sum_{g\in E}c_{g}\sigma_{i}(g)\rho_{i}(q_{j})\right\|_{p}.\]
Since $\sigma_{i}$ is an embedding sequence, we know that
\[\left\|\sum_{g\in E}c_{g}\sigma_{i}(g)\rho_{i}(q_{j})\right\|_{p}\to \left\|\sum_{g\in E}c_{g}u_{q}q_{j}\right\|_{p}\]
pointwise. As $V_{E}^{(j)}$ is finite-dimensional,
\[\left\|\sum_{g\in E}c_{g}\sigma_{i}(g)\rho_{i}(q_{j})\right\|_{p}\to \left\|\sum_{g\in E}c_{g}u_{q}q_{j}\right\|_{p}\]
uniformly on the $\|\cdot\|_{p}$ unit ball of $V_{E}^{(j)}.$

	If $E$ is sufficiently large, then for all $g_{1},\dots,g_{k}\in F$
\begin{align*}
\|T_{A}^{(j)}(g_{1}\cdots g_{k}q_{j})-\sigma_{1}(g_{1})\cdots \sigma_{i}(g_{k})T_{A}^{(j)}(q_{j})\|_{p}&=\|\sigma_{i}(g_{1}\cdots g_{k})\rho_{i}(q_{j})A-\sigma_{1}(g_{1})\cdots \sigma_{i}(g_{k})\rho_{i}(q_{j})A\|_{p}\\
&\leq\|A\|_{\infty}\|\sigma_{i}(g_{1}\cdots g_{k})-\sigma_{i}(g_{1})\cdots \sigma_{i}(g_{k})\|_{p}\\
&\to 0.
\end{align*}
Thus if $E$ is sufficiently large, depending upon $F,m,\delta$ then for all $A_{1},\dots,A_{n}\in M_{d_{i}}(\CC)$ with $\|A_{j}\|_{\infty}\leq 1,$
\[T_{A_{1}}^{(1)}\oplus \cdots \oplus T_{A_{n}}^{(n)}\in \Hom_{\Gamma}(S,F,m\delta,\sigma_{i})_{n}.\]
So
\[\alpha_{S}( \Hom_{\Gamma}(S,F,m\delta,\sigma_{i})_{n})\supseteq \prod_{j=1}^{n}\Ball(M_{d_{i}}(\CC),\|\cdot\|_{\infty})\rho_{i}(q_{j}).\]
By \cite{TJ}
\[\inf_{i}\left(\frac{\vol(\Ball(M_{d_{i}}(\CC),\|\cdot\|_{\infty}))}{\vol\left(\Ball\left(M_{d_{i}}(\CC),\|\cdot\|_{L^{p}\left(M_{d_{i}}(\CC),\frac{1}{d_{i}}\Tr\right)}\right)\right)}\right)^{1/2d_{i}^{2}}>0,\]
so the theorem now follows from Lemma \ref{L:volpack}.

\end{proof}

We can prove an analogue for the action of $\Gamma$ on its reduced $C^{*}$-algebra but first we need a Lemma.

\begin{lemma} Let $\Gamma$ be a countable discrete group, and $V\subseteq L^{p}(L(\Gamma),\tau_{\Gamma})$ a closed $\Gamma$-invariant subspace (for the action of left multiplication by elements of $\Gamma).$ Then there is an orthogonal projection $q\in L(\Gamma)$ with $V=L^{p}(L(\Gamma),\tau_{\Gamma})q.$

\end{lemma}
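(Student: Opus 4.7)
The statement (modulo an evident typo) claims $X = L^{p}(L(\Gamma),\tau_{\Gamma})\,q$ for some projection $q\in L(\Gamma)$. My plan proceeds in three steps.

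First, I will upgrade the $\Gamma$-invariance of $X$ to invariance under the entire von Neumann algebra $M := L(\Gamma)$. For $a\in M$, Kaplansky density applied to the $*$-subalgebra $\mathrm{span}\{u_g : g\in\Gamma\}$ produces a net $a_\lambda$ in this span with $\|a_\lambda\|_\infty\le\|a\|_\infty$ and $a_\lambda\to a$ in SOT. I claim $a_\lambda x \to ax$ in $L^p$ for every $x\in L^p(M,\tau)$: for $p\le 2$ this follows from $\|\cdot\|_p\le\|\cdot\|_2$ together with the identity $\|(a_\lambda-a)\widehat{1}\|_2\to 0$; for $p>2$ one approximates $x$ in $L^p$-norm by elements of $M$ and uses the interpolation $\|y\|_p\le\|y\|_\infty^{1-2/p}\|y\|_2^{2/p}$. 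Closedness of $X$ then gives $MX\subseteq X$. In particular, the polar decomposition $x=u|x|$ with $u\in M$ yields $|x|=u^{*}x\in X$, so $X$ is stable under $x\mapsto|x|$; and for $x\in X_+$ and $\varepsilon>0$, the bounded function $f_\varepsilon(t)=t^{-1}\chi_{[\varepsilon,\infty)}(t)$ gives $\chi_{[\varepsilon,\infty)}(x)=f_\varepsilon(x)\,x\in X\cap M$.

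Second, I will build $q$ as the join of projections inside the left ideal $I:=X\cap M$. By the previous step, $I$ contains every spectral projection $\chi_{[\varepsilon,\infty)}(|x|)$ with $x\in X$. Moreover the family of projections in $I$ is closed under pairwise join: for projections $p,p'\in I$, the positive element $p+p'\in I$ has spectral projections $\chi_{[\varepsilon,\infty)}(p+p')\in I$ increasing to $p\vee p'$, and $\tau(p\vee p'-\chi_{[\varepsilon,\infty)}(p+p'))\to 0$ forces $p\vee p'\in X$ by $L^p$-closedness; being bounded, $p\vee p'\in I$. Using separability of $L(\Gamma)$, I select an increasing sequence $(p_n)\subseteq I$ of projections with supremum $q:=\bigvee\{p\in I:p\text{ is a projection}\}\in M$.

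Third, I check both inclusions. For $X\subseteq L^p q$, reduce to $x\in X_+$ by polar decomposition; then the right support $r(x)=\sup_{\varepsilon}\chi_{[\varepsilon,\infty)}(x)$ lies below $q$, so $xq=x$. For $L^p q\subseteq X$, first handle bounded $y\in Mq$: non-commutative H\"older gives $\|y(q-p_n)\|_p\le\|y\|_\infty\tau(q-p_n)^{1/p}\to 0$, so $yp_n\to y$ in $L^p$; since $yp_n\in M\cdot I\subseteq I\subseteq X$ and $X$ is closed, $y\in X$. A general $y\in L^p q$ is approximated in $L^p$ by the bounded elements $\chi_{[0,k]}(|y^{*}|)\,y\in Mq$, each in $X$ by the previous case, so $y\in X$. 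The main technical obstacle is the $L^p$-continuity of left multiplication along bounded SOT-convergent nets in step one, which requires splitting the ranges $p\le 2$ and $p>2$; the rest is functional calculus and bookkeeping with spectral projections.
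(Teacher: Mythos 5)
Your argument is correct and follows essentially the same route as the paper: upgrade $\Gamma$-invariance to $L(\Gamma)$-invariance via bounded SOT-approximation, extract support projections of elements of $X$ by functional calculus, close the resulting family of projections under joins, and exhaust by a countable increasing sequence using separability to obtain $q$. The only cosmetic difference is that you realize $p\vee p'$ through the spectral projections $\chi_{[\varepsilon,\infty)}(p+p')$ of the sum, whereas the paper uses the identity $e\vee f = 1-\lim_{n}\bigl((1-e)(1-f)(1-e)\bigr)^{n}$; both devices are standard and interchangeable here.
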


\begin{proof} We always have the inequality
\[\|xy\|_{p}\leq \|x\|_{\infty}\|y\|_{p}.\]
Note that if $x_{n}\in L(\Gamma),\sup_{n}\|x_{n}\|_{\infty}<\infty,$ and $x_{n}\to x$ in the strong operator topology on $l^{2}(\Gamma),$ then $x_{n}y\to xy.$ Indeed, this follows by the above inequality and the density of $l^{2}(\Gamma)$ in $L^{p}(L(\Gamma),\tau_{\Gamma}).$ Thus a closed $\Gamma$-invariant subspace is the same as an $L(\Gamma)$-invariant subspace. It suffices to prove the following two claims.

	\emph{Claim 1. If $x\in L^{p}(L(\Gamma),\tau_{\Gamma}),$ then $\overline{L(\Gamma)x}^{\|\cdot\|_{p}}=L^{p}(L(\Gamma),\tau_{\Gamma})\chi_{(0,\infty)}(|x|)$}.

\emph{Claim 2. If $e,f$ are orthogonal projections in $L(\Gamma),$ then}
\[\overline{L^{p}(L(\Gamma),\tau_{\Gamma})e+L^{p}(L(\Gamma),\tau_{\Gamma})f}=L^{p}(L(\Gamma),\tau_{\Gamma})(e\vee f).\]
Indeed, if we grant the two claims, then by separability we can find increasing subspaces $V_{n}$ of $\Gamma$ of the form $L^{p}(L(\Gamma),\tau_{\Gamma})q_{n}$ for some orthogonal projection $q_{n}$ such that
\[V=\overline{\bigcup_{n=1}^{\infty}V_{n}}.\]
 Setting $q=\sup q_{n}$ we see that
\[V=L^{p}(L(\Gamma),\tau_{\Gamma})q.\]

	For Claim 2 it suffices to note that by functional calculus
\[1-(e\vee f)=1-(1-e)\wedge (1-f)=1-\lim_{n\to \infty}((1-e)(1-f)(1-e))^{n},\]
the limit in $\|\cdot\|_{p}.$ As
\[1-[(1-e)(1-f)(1-e)]^{n}\in L^{p}(L(\Gamma),\tau_{\Gamma})e+L^{p}(L(\Gamma),\tau_{\Gamma})f\]
for all $n,$ this implies that
\[L^{p}(L(\Gamma),\tau_{\Gamma})(e\vee f)\subseteq \overline{L^{p}(L(\Gamma),\tau_{\Gamma})e+L^{p}(L(\Gamma),\tau_{\Gamma})f}.\]
The reverse inclusion being trivial, this proves claim 2.

For Claim 1, let $x=v|x|$ be the polar decomposition. Since $|x|=v^{*}x,$
\[\overline{L(\Gamma) x}^{\|\cdot\|_{p}}=\overline{L(\Gamma)|x|}^{\|\cdot\|_{p}}.\]
Let
\[y_{n}=\chi_{(\varepsilon,\infty)}(|x|)|x|^{-1}.\]
By functional calculus
\[\|y_{n}|x|-\chi_{(0,\infty)}(|x|)\|_{p}\to 0.\]
Thus
\[\overline{L(\Gamma)|x|}^{\|\cdot\|_{p}}\supseteq L^{p}(\Gamma,\tau_{\Gamma})\chi_{(0,\infty)}(|x|).\]
The reverse inclusion being trivial, we are done.

\end{proof}

 If $\Gamma$ is a countable discrete group we use $C^{*}_{\lambda}(\Gamma)$ for $\overline{\CC[\Gamma]}^{\|\cdot\|_{\infty}}$ with the closure taken in the left regular representation. As a corollary of the above Theorem we deduce one of the conjectures stated in \cite{Me}.
\begin{cor} Let $\Gamma$ be an $\R^{\omega}$-embeddable group and $1\leq p<\infty.$ Let $I\subseteq C^{*}_{\lambda}(\Gamma)$ be a norm closed left-ideal. Let $\overline{I}^{\wk^{*}}=L(\Gamma)q$ (with the closure taken in $L(\Gamma).$ Then
\[\dim_{\Sigma,S^{p},\mult}(I,\Gamma)\geq \tau(q).\]
\end{cor}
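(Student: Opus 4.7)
The plan is to construct a bounded, $\Gamma$-equivariant linear map from $I$ with dense image into $L^p(L(\Gamma),\tau)q$. Once such a map is in hand, Property~1 yields
\[\dim_{\Sigma,S^p,\multi}(L^p(L(\Gamma),\tau)q,\Gamma)\leq \dim_{\Sigma,S^p,\multi}(I,\Gamma),\]
and Theorem~\ref{T:LPcalc} (specialized to $n=1$) identifies the left-hand side with $\tau(q)$, which is the desired lower bound.

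For the map itself, I would take the natural inclusion $\iota\colon I\hookrightarrow C^*_\lambda(\Gamma)\subseteq L(\Gamma)\subseteq L^p(L(\Gamma),\tau)$. It is contractive because $\|x\|_p\leq\|x\|_\infty$ in a tracial state algebra, and $\Gamma$-equivariant because both the source and target carry the left-multiplication action. Since $I\subseteq\overline{I}^{\wk^*}=L(\Gamma)q$, the image lies inside $L^p(L(\Gamma),\tau)q$, so the first two properties demanded of the map are automatic.

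The main obstacle is showing that $\iota(I)$ is $\|\cdot\|_p$-dense in $L^p(L(\Gamma),\tau)q$. The preceding lemma already tells us that $\overline{\iota(I)}^{\|\cdot\|_p}=L^p(L(\Gamma),\tau)q'$ for some projection $q'\leq q$, so the task is to pin down $q'=q$. Since $L(\Gamma)q$ is $\|\cdot\|_p$-dense in $L^p(L(\Gamma),\tau)q$, it suffices to establish $L(\Gamma)q\subseteq\overline{\iota(I)}^{\|\cdot\|_p}$. I would do this by a Hahn--Banach argument: if some $x_0\in L(\Gamma)q$ lay outside this closure, one could find $f\in L^{p'}(L(\Gamma),\tau)$ (with $p'$ the conjugate exponent, $p'=\infty$ when $p=1$) vanishing on $I$ yet satisfying $\tau(fx_0)\neq 0$. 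Because $\tau$ is a state, $L^{p'}\subseteq L^1(L(\Gamma),\tau)$, so $y\mapsto\tau(fy)$ is a weak-$*$ continuous functional on $L(\Gamma)$; its vanishing on $I$ then forces vanishing on $\overline{I}^{\wk^*}=L(\Gamma)q$, contradicting $\tau(fx_0)\neq 0$. This pins down $q'=q$ and completes the argument.
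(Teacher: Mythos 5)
Your proposal is correct, and its overall reduction is the same as the paper's: both arguments note that, by Property~1 together with Theorem~\ref{T:LPcalc} (with $n=1$), everything comes down to showing that the inclusion $I\hookrightarrow L^{p}(L(\Gamma),\tau)q$ is contractive, equivariant, and has dense image. Where you genuinely diverge is in how the density is established. The paper stays inside the algebra: using the preceding lemma it writes $\overline{I}^{\|\cdot\|_{p}}=L^{p}(L(\Gamma),\tau)q'$ with $q'=\sup_{x\in I}\chi_{(0,\infty)}(|x|)$, and then forces $q'=q$ by two functional-calculus claims --- polar decomposition plus Kaplansky density to show $\chi_{(0,\infty)}(|x|)\in\overline{I}^{\wk^{*}}$ for each $x\in I$, and the formula $e\vee f=1-\lim_{n}((1-e)(1-f)(1-e))^{n}$ to handle joins of such projections. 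You instead run a duality argument: a separating functional furnished by Hahn--Banach is represented by some $f\in L^{p'}(L(\Gamma),\tau)\subseteq L^{1}(L(\Gamma),\tau)\cong L(\Gamma)_{*}$, so $y\mapsto\tau(fy)$ is a normal functional on $L(\Gamma)$, and its vanishing on $I$ propagates to the weak-$*$ closure $L(\Gamma)q$, contradicting $\tau(fx_{0})\neq 0$. Both routes are sound. Yours is shorter and avoids functional calculus entirely, at the price of invoking the noncommutative duality $L^{p}(L(\Gamma),\tau)^{*}\cong L^{p'}(L(\Gamma),\tau)$ and the predual identification $L(\Gamma)_{*}\cong L^{1}(L(\Gamma),\tau)$; the paper's route is more hands-on and, as a by-product, identifies the support projection $q'$ explicitly, a description that is then recycled in the $p=\infty$ theorem that follows the corollary.
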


\begin{proof} It suffices to show that the inclusion $I\subseteq L^{p}(L(\Gamma),\tau)q$ has dense image. By the previous Lemma we may find $q'\in \Proj(L(\Gamma))$ such that
\[\overline{I}^{\|\cdot\|_{p}}=L^{p}(L(\Gamma),\tau)q'.\]
By the argument in the previous Lemma,
\[q'=\sup_{x\in I}\chi_{(0,\infty)}(|x|).\]
So it suffices to prove the following two claims.

\emph{Claim 1. If $x\in C^{*}_{\lambda}(\Gamma),$ then $\chi_{(0,\infty)}(|x|)\in \overline{I}^{\wk^{*}}.$}

\emph{Claim 2. If $e,f\in \Proj(\overline{I}^{\wk^{*}}),$ then $e\vee f\in \Proj(\overline{I}^{\wk^{*}}).$}

	For the proof of Claim $1,$ let $x=v|x|$ be the polar decomposition. By the Kaplansky Density Theorem, we can find $v_{n}\in C^{*}_{\lambda}(\Gamma)$ with $\|v_{n}\|_{\infty}\leq 1$ and $\|v_{n}-v\|_{2}\to 0.$ But then $\|v_{n}^{*}x-|x|\|_{2}\to 0,$ so $|x|\in \overline{I}^{\wk^{*}}.$ Since
\[\chi_{(\varepsilon,\infty)}(|x|)=|x|^{-1}\chi_{(\varepsilon,\infty)}(|x|)|x|,\]
we find that $\chi_{(0,\infty)}(|x|)\in \overline{I}^{\wk^{*}}.$

	For the proof of claim 2, we use the formula (proved by functional calculus):
\[e\vee f=1-\lim_{n\to \infty}([(1-e)(1-f)(1-e)])^{n}\]
where the limit is in $\|\cdot\|_{2}.$ Since $e,f\in \overline{I}^{\wk^{*}},$ a little calculation shows that
\[1- ([(1-e)(1-f)(1-e)])^{n}\in \overline{I}^{\wk^{*}}.\]
This proves the corollary.

\end{proof}

We can also handle the case $p=\infty$ if we assume a little more.

\begin{definition}\emph{Let $A$ be a $C^{*}$-algebra. A sequence of potentially nonlinear, nonmultiplicative maps}
\[\sigma_{i}\colon A\to M_{d_{i}}(\CC)\]
 \emph{where $d_{i}$ is a sequence of integers going to $\infty$ are said to be} norm microstates \emph{if for all $a,b\in A$}
 \[\|\sigma_{i}(ab)-\sigma_{i}(a)\sigma_{i}(b)\|_{\infty}\to_{i\to\infty}0,\]
 \[\|\sigma_{i}(a)\|_{\infty}\to \|a\|,\]
 \[\|\sigma_{i}(a^{*})-\sigma_{i}(a)^{*}\|_{\infty}\to 0,\]
 \[\|\sigma_{i}(a+b)-\sigma_{i}(a)-\sigma_{i}(b)\|_{\infty}\to 0,\]
 \[\|\sigma_{i}(\lambda a)-\lambda\sigma_{i}(a)\|_{\infty}\to 0,\mbox{ for all $\lambda\in\CC,a\in A$}.\]
 \end{definition}

\begin{theorem}\label{T:inftycase} Let $\Gamma$ be a countable discrete group. Assume that there are norm microstates $\sigma_{i}\colon C^{*}_{\lambda}(\Gamma)\to M_{d_{i}}(\CC)$ such that
\[\tr( \sigma_{i}(x))\to \tau(x)\mbox{ for all $x\in C^{*}_{\lambda}(\Gamma)$,}\]
\[\sigma_{i}(g)\in \mathcal{U}(d_{i})\mbox{ for all $g\in \Gamma$.}\]
 Let $I\subseteq C^{*}_{\lambda}(\Gamma)$ be a norm-closed left ideal and let $I^{\wk^{*}}=L(\Gamma)q,$ with $q\in \Proj(L(\Gamma)).$ Then,
\[\underline{\dim}_{\Sigma,S^{\infty},\textnormal{mult}}(I,\Gamma)\geq \tau(q).\]

\end{theorem}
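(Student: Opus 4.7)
The plan is to mirror the proof of Theorem~\ref{T:LPcalc}, with the norm microstates $\sigma_{i}$ replacing the asymptotic $*$-homomorphisms $\rho_{i}$, and a carefully chosen single element of $I$ playing the role of the projections $q_{j}$. Since $\overline{I}^{\wk^{*}} = L(\Gamma)q$, the set $B := I \cap I^{*}$ is a hereditary $C^{*}$-subalgebra of $C^{*}_{\lambda}(\Gamma)$ whose $\wk^{*}$-closure is $(L(\Gamma)q) \cap (qL(\Gamma)) = qL(\Gamma)q$, so $B$ has a positive approximate unit converging $\sigma$-strongly to $q$. Given $\eta > 0$, I would first produce $f \in B$ with $f = f^{*}$ and $0 \le f \le 1$ (for example by taking $f = e_{\alpha}^{1/k}$ for suitable $\alpha, k$) together with a cutoff $c \in (0, 1)$ outside the countable atom set of the spectral distribution of $f$, such that $\tau(\chi_{[c, 1]}(f)) > \tau(q) - \eta$.

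Next, set $H_{i} = \tfrac{1}{2}(\sigma_{i}(f) + \sigma_{i}(f)^{*})$. By asymptotic self-adjointness of norm microstates, $\|\sigma_{i}(f) - H_{i}\|_{\infty} \to 0$; combining the trace hypothesis $\tr \circ \sigma_{i}(x) \to \tau(x)$ on $\CC[\Gamma]$ with asymptotic multiplicativity in operator norm and Stone--Weierstrass, I obtain $\tr(g(H_{i})) \to \tau(g(f))$ for every continuous $g$ on the spectrum of $f$. Hence the spectral projection $p_{i} := \chi_{[c,1]}(H_{i}) \in \Proj(M_{d_{i}}(\CC))$ satisfies $\tr(p_{i}) \to \tau(\chi_{[c,1]}(f)) > \tau(q) - \eta$. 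Taking $S = (f, 0, 0, \ldots)$, for each $A \in \Ball(M_{d_{i}}(\CC), \|\cdot\|_{\infty})$ and each sufficiently large finite $E \supseteq F$, I would define $T_{A}$ on $\Span\{u_{g} f : g \in E\}$ by choosing a basis from the $u_{g} f$'s and setting $T_{A}(u_{g} f) = \sigma_{i}(g) H_{i} A$, then extending linearly. Asymptotic linearity, multiplicativity and self-adjointness of $\sigma_{i}$ in operator norm---applied to the finite list of products $u_{g_{1}} \cdots u_{g_{k}} f$ with $g_{j} \in F$, $k \le m$---ensure that $T_{A} \in \Hom_{\Gamma}(S, F, m, \delta, \sigma_{i})_{M}$ for some uniform $M$. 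The image $\{T_{A}(f) : A \in \Ball(M_{d_{i}})\} = H_{i} \Ball(M_{d_{i}})$ then contains $c \cdot p_{i} \Ball(M_{d_{i}})$, since $H_{i}$ restricted to $p_{i}(\CC^{d_{i}})$ is bounded below by $c$ and hence invertible there with inverse of operator norm at most $1/c$.

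Applying Lemma~\ref{L:volpack} with $p = \infty$---where the volume hypothesis holds trivially with $\alpha = 1$ because $L^{\infty}(M_{d_{i}}, \tr)$ is literally $(M_{d_{i}}, \|\cdot\|_{\infty})$ and the measure ratio is identically $1$---produces
\[
\liminf_{i} \frac{1}{d_{i}\,\Tr(p_{i})}\, d_{\varepsilon}\bigl(c \cdot p_{i} \Ball(M_{d_{i}}), \|\cdot\|_{\infty}\bigr) \ge \kappa(1, \varepsilon/c),
\]
with $\kappa(1, \cdot) \to 1$ as $\varepsilon \to 0$; the $\liminf$ form (rather than just $\limsup$) of the lemma holds in this regime precisely because the volume ratio is exactly $1$ at every index. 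Normalising by $d_{i}^{2}$ gives $\underline{\dim}_{\Sigma, S^{\infty}, \multi}(I, \Gamma) \ge \kappa(1, \varepsilon/c)(\tau(q) - \eta)$, and sending $\varepsilon \to 0$ and then $\eta \to 0$ yields the desired lower bound.

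The hardest step is the construction of the family $T_{A}$: the linear extension from a chosen basis of $\Span\{u_{g} f : g \in E\}$ must simultaneously remain uniformly bounded in operator norm \emph{and} satisfy $\|T_{A}(u_{g_{1}} \cdots u_{g_{k}} f) - \sigma_{i}(g_{1}) \cdots \sigma_{i}(g_{k}) T_{A}(f)\|_{\infty} < \delta$ uniformly in $A \in \Ball(M_{d_{i}})$ for all $g_{j} \in F$, $k \le m$. Both requirements follow from the defining properties of norm microstates applied to the finite list of elements involved, but coordinating the basis choice with $E, F, m, \delta$ requires the same care taken in Theorem~\ref{T:LPcalc}, now using the stronger operator-norm approximations of norm microstates in place of the $\|\cdot\|_{p}$-approximations of an embedding sequence.
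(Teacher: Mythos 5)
Your overall strategy coincides with the paper's: build the family $A\mapsto(\text{microstate of an ideal element})\cdot A$ of almost equivariant maps, use the operator-norm approximation properties of the norm microstates to check membership in $\Hom_{\Gamma}(S,F,m,\delta,\sigma_{i})_{M},$ and then feed the image into Lemma \ref{L:volpack}. Where you genuinely diverge is the endgame. The paper keeps an arbitrary generating sequence $(x_{j})$ for $I,$ chooses coefficients $c_{gj}$ with $\|q-\sum c_{gj}u_{g}x_{j}\|_{2}<\varepsilon,$ and compresses the tuples $(\rho_{i}(x_{j})B)_{j}$ through the linear map $\Phi(f)=\sum c_{gj}\sigma_{i}(g)f(j)$ into $L^{2}(M_{d_{i}}(\CC),\tr),$ so that the volume estimate is applied at $p=2$ to the set $\{q_{i}B\};$ you instead reduce to a single positive generator $f,$ pass to the spectral projection $p_{i}=\chi_{[c,1]}(H_{i}),$ and apply the lemma at $p=\infty$ with $\alpha=1.$ Your route avoids the compression map and the choice of the $c_{gj},$ at the cost of needing $\tr\circ\sigma_{i}\to\tau$ on all of $C^{*}_{\lambda}(\Gamma)$ (which does follow from the hypothesis on $\CC[\Gamma]$ together with the asymptotic isometry of norm microstates) and the $p=\infty$ case of the volume comparison. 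Your remark that one needs the $\liminf$ form of Lemma \ref{L:volpack} is correct and is a point the paper glosses over; the right justification is that the hypothesis $\mu_{n}(A_{n})=1$ holds at every index while the ratio $V_{p}(q_{n})V_{p}(1-q_{n})/V_{p}(\id_{d(n)})$ is uniformly bounded along every subsequence, not that this ratio equals $1.$

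The one genuine gap is the choice of $f.$ You take $f=e_{\alpha}^{1/k}$ for a member of an approximate unit of $I\cap I^{*}$ and then declare $S=(f,0,0,\dots)$ to be the generating sequence used to compute $\underline{\dim}_{\Sigma,S^{\infty},\multi}(I,\Gamma).$ Such an $f$ need not dynamically generate $I$: $\overline{\Span(\Gamma f)}^{\|\cdot\|_{\infty}}=\overline{C^{*}_{\lambda}(\Gamma)f}$ is in general a proper closed left sub-ideal $I_{f}\subsetneq I,$ so as written your computation lower-bounds $\underline{\dim}(I_{f},\Gamma),$ and monotonicity of $\underline{\dim}$ under passing to closed invariant subspaces is not among the established properties of this invariant (Property 1 gives monotonicity only under equivariant maps with dense image, and Property 2 points the wrong way). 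Two repairs are available: (i) take $f$ to be a strictly positive element of the hereditary subalgebra $I\cap I^{*},$ which exists since $C^{*}_{\lambda}(\Gamma)$ is separable; then $\overline{C^{*}_{\lambda}(\Gamma)f}=I,$ the support projection of $f$ is $q,$ and $\tau(\chi_{[c,1]}(f))\uparrow\tau(q)$ as $c\downarrow 0,$ so $S=(f,0,\dots)$ is a legitimate generating sequence and the rest of your argument applies verbatim; or (ii) keep your $f$ and note that right multiplication $x\mapsto xf$ is a bounded $\Gamma$-equivariant map $I\to I_{f}$ with dense image, whence $\underline{\dim}(I_{f},\Gamma)\leq\underline{\dim}(I,\Gamma)$ by Property 1. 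With either repair, and with the usual compactness argument (as in Theorem \ref{T:LPcalc}) to make the norm bound and almost-equivariance of the linearly extended $T_{A}$ uniform over $A\in\Ball(M_{d_{i}}(\CC),\|\cdot\|_{\infty}),$ your proof is correct.
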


\begin{proof} Let
\[A=\frac{l^{\infty}(\NN,(M_{d_{i}}(\CC))_{i=1}^{\infty})}{c_{0}(\NN,(M_{d_{i}}(\CC))_{i=1}^{\infty})}.\]
Our hypothesis implies that there is an isometric $*$-homomorphism
\[\sigma\colon C^{*}_{\lambda}(\Gamma) \to A,\]
such that
\[\sigma(u_{g})=\pi(\sigma_{1}(g),\sigma_{2}(g),\dots)\]
where
\[\pi\colon l^{\infty}(\NN,(M_{d_{i}}(\CC))_{i=1}^{\infty})\to A\]
is the quotient map. By using a Hamel basis for $C^{*}_{\lambda}(\Gamma),$ we may choose a sequence of linear maps
\[\phi_{i}\colon C^{*}_{\lambda}(\Gamma)\to M_{d_{i}}(\CC)\]
with
\[\sigma(x)=\pi(\phi_{1}(x),\phi_{2}(x),\dots).\]
As before, we may extend $\phi_{i}$ to an embedding sequence
\[\psi_{i}\colon L(\Gamma)\to M_{d_{i}}(\CC).\]

	Now let $\varepsilon>0,$ and choose a finite subset $E\subseteq \Gamma,l\in \NN,$ and $c_{gj}\in \CC,$ for $(g,j)\in E\times \{1,\dots,l\}$ with
\[\left\|q-\sum_{\substack{g\in E,\\1\leq j\leq l}}c_{gj}u_{g}x_{j}\right\|_{2}<\varepsilon.\]
Fix $E\subseteq F\subseteq \Gamma$ finite, $l\leq m\in \NN,\delta>0.$ Since all injective $*$-homomorphisms defined on $C^{*}$-algebras are isometric, it is easy to see that if we define $\rho_{i}=\frac{\phi_{i}\big|_{I_{F,m}}}{\left\|\phi_{i}\big|_{I_{F,m}}\right\|},$ then
\[\left\|\rho_{i}-\phi_{i}\big|_{I_{F,m}}\right\|\to 0.\]

	For $B\in M_{d_{i}}(\CC)$ define
\[T_{B}\colon I_{F,m}\to M_{d_{i}}(\CC),\]
by
\[T_{B}(x)=\rho_{i}(x)B.\]
If $\|B\|_{\infty}\leq 1,$ then for all $x\in I_{F,m}$
\[\|T_{B}(x)\|\leq \|B\|_{\infty}\|\rho_{i}(x)\|_{\infty}\leq \|B\|_{\infty}\|x\|_{\infty}.\]
Further if $\|B\|_{\infty}\leq 1,$ if $1\leq j,k\leq m$ and $g_{1},\dots,g_{k}\in F,$ then
\begin{align*}
\|T_{B}(g_{1}\cdots g_{k}x_{j})-\sigma_{i}(g_{1})\cdots\sigma_{i}(g_{k})T_{B}(x_{j})\|&\leq \|\phi_{i}(g_{1}\cdots g_{k}x_{j})-\sigma_{i}(g_{1})\cdots\sigma_{i}(g_{k})\phi_{i}(x_{j})\|\\
&\to 0
\end{align*}
using that
\[\pi((\phi_{i}(g_{1}\cdots g_{k}x_{j}))_{i=1}^{\infty})=\pi((\sigma_{i}(g_{1})\cdots\sigma_{i}(g_{k})\phi_{i}(x_{j}))_{i=1}^{\infty})).\]

	Now suppose $V\subseteq l^{\infty}(\NN,M_{d_{i}}(\CC))$ $\varepsilon$-contains $\{(\rho_{i}(x_{j})B)_{j=1}^{\infty}:\|B\|_{\infty}\leq 1\}.$ Define a map $\Phi\colon l^{\infty}(\NN,M_{d_{i}}(\CC))\to L^{2}(M_{d_{i}}(\CC),\tr)$ by
\[\Phi(f)=\sum_{\substack{g\in E,1\leq j\leq l}}c_{gj}\sigma_{i}(g)f(j).\]
Our hypotheses imply that for all large $i,$
\[\Phi(V)\supseteq_{3\varepsilon,\|\cdot\|_{2}}\{qB:B\in \Ball(M_{d_{i}}(\CC),\|\cdot\|_{\infty})\}.\]
Our methods to prove Theorem \ref{T:LPcalc} can be used to complete the proof.

\end{proof}

Let us note that the assumption that $\sigma_{i}(g)\in \mathcal{U}(d_{i})$ is mild. For example, given any sequence of norm microstates
\[\sigma_{i}\colon \Gamma\to C^{*}_{\lambda}(\Gamma)\]
there always exists $U_{i,g}\in \mathcal{U}(d_{i})$ such that
\[\|\sigma_{i}(g)-U_{i,g}\|\to_{i\to\infty}0.\]
To see this, observe that
\[\|\sigma_{i}(g)^{*}\sigma_{i}(g)-1\|\to_{i\to\infty}0,\]
\[\|\sigma_{i}(g)\sigma_{i}(g)^{*}-1\|\to_{i\to\infty}0.\]
This implies that for all large $i,$ both $\sigma_{i}(g)^{*}\sigma_{i}(g),\sigma_{i}(g)\sigma_{i}(g)^{*}$ are invertible. Thus for all large $i,$ $\sigma_{i}(g)$ is both left and right invertible, and thus invertible. Hence if we let $\sigma_{i}(g)=U_{i,g}|\sigma_{i}(g)|$ be the polar decomposition, then for all large $i,$ we know $U_{i,g}$ is a unitary. Further,
\[\|\sigma_{i}(g)-U_{i,g}\|\leq \| |\sigma_{i}(g)|-1\|.\]
Since $|\sigma_{i}(g)|=(\sigma_{i}(g)^{*}\sigma_{i}(g))^{1/2},$ and
\[\|\sigma_{i}(g)^{*}\sigma_{i}(g)-1\|_{\infty}\to 0,\]
uniform continuity of the square root function and continuous functional calculus imply that
\[\||\sigma_{i}(g)|-1\|_{\infty}\to 0.\]
Thus
\[\|\sigma_{i}(g)-U_{i,g}\|_{\infty}\to 0.\]
We close this section by giving a Proposition which provides examples of groups which have a norm microstates as in the previous theorem.  We first state a Lemma that which will ease the process of proving that a group has such norm microstates.
	
	We need some preliminary notation. Let $A$ be a set and $(V_{a})_{a\in A}$ be Banach spaces. We let $l^{\infty}(A,(V_{a})_{a\in A})$ be the set of $(v_{a})_{a\in A}$ such that $v_{a}\in V_{a}$ and $\sup_{a}\|v_{a}\|<\infty.$ We let $c_{0}(A,(V_{a})_{a\in A})$ be the set of all $(v_{a})_{a\in A}$ such that $v_{a}\in V_{a}$ and $a\mapsto \|v_{a}\|$ is in $c_{0}(A).$ We need to consider the special case when $V_{a}$ are matrix algebras. Given $d(a)\in \NN,$ and a free ultrafilter $\omega\in\beta A\setminus A$ we let
\[\tau_{\omega}\colon\frac{l^{\infty}(A,(M_{d(a)}(\CC))_{a\in A})}{c_{0}(A,(M_{d(a)}(\CC))_{a\in A})}\to \CC\]
be defined by
\[\tau_{\omega}((A_{a})_{a\in A}+c_{0}(A,(M_{d(a)}(\CC))_{a\in A}))=\lim_{a\to\omega}\tr(A_{a}).\]

\begin{lemma} Let $\Gamma$ be a countable discrete group, and suppose that  $d_{i}$ is a sequence of integers with
\[d_{i}\to \infty.\]
Let $\sigma_{i}\colon C^{*}_{\lambda}(\Gamma)\to M_{d_{i}}(\CC)$ be a sequence of potentially nonlinear, nonmultiplicative maps.
Then $\sigma_{i}$ is a sequence of norm microstates such that
\[\tr(\sigma_{i}(x))\to \tau(x)\mbox{ for all $x\in C^{*}_{\lambda}(\Gamma)$}\]
if and only if there is an injective $*$-homomorphism
\[\sigma\colon C^{*}_{\lambda}(\Gamma)\to\frac{l^{\infty}(\NN,(M_{d_{i}}(\CC))_{i=1}^{\infty})}{c_{0}(\NN,(M_{d_{i}}(\CC))_{i=1}^{\infty})}\]
satisfying
\[\sigma(x)=(\sigma_{i}(x))_{i=1}^{\infty}+c_{0}(\NN,(M_{d_{i}}(\CC))_{i=1}^{\infty}),\]
\[\tau_{\omega}\circ \sigma=\tau,\]
for all $\omega\in\beta\NN\setminus \NN.$

\end{lemma}
\begin{proof}
First suppose that there is an injective $*$-homomorphism
\[\sigma\colon C^{*}_{\lambda}(\Gamma)\to\frac{l^{\infty}(\NN,(M_{d_{i}}(\CC))_{i=1}^{\infty})}{c_{0}(\NN,(M_{d_{i}}(\CC))_{i=1}^{\infty})}\]
with
\[\sigma(x)=(\sigma_{i}(x))_{i=1}^{\infty}+c_{0}(\NN,(M_{d_{i}}(\CC))_{i=1}^{\infty}),\]
\[\tau_{\omega}\circ \sigma=\tau.\]
By definition of the norm on $\frac{l^{\infty}(\NN,(M_{d_{i}}(\CC))_{i=1}^{\infty})}{c_{0}(\NN,(M_{d_{i}}(\CC))_{i=1}^{\infty})}$ we know that
\[\|x\|_{\infty}=\limsup_{i\to\infty}\|\sigma_{i}(x)\|_{\infty}.\]
The statement that $\tau_{\omega}\circ\sigma=\tau$ is equivalent to
\[\lim_{i\to\omega}\tr(\sigma_{i}(x))=\tau(x).\]
Since we are assuming this for all free ultrafilters $\omega$ we have
\[\lim_{i\to\infty}\tr(\sigma_{i}(x))=\tau(x),\]
for all $x\in C^{*}_{\lambda}(\Gamma).$ As noted in Proposition 3.1 of \cite{Male}, this implies that
\[\liminf_{i\to\infty}\|\sigma_{i}(x)\|_{\infty}\geq \|x\|_{\infty},\]
for all $x\in C^{*}_{\lambda}(\Gamma).$ We thus see that $\sigma_{i}$ is a sequence of norm microstates such that
\[\tr(\sigma_{i}(x))\to \tau(x)\]
for all $x\in C^{*}_{\lambda}(\Gamma).$ The converse is even easier.

\end{proof}
For part $(i)$ of the next Proposition we recall that a discrete group $\Gamma$ is \emph{maximally almost periodic} if it has an injective homomorphism into a compact topological group.

\begin{proposition}\label{P:examples} Let $\mathcal{C}$ be the class of discrete groups $\Gamma$ which have norm microstates $\sigma_{i}\colon C^{*}_{\lambda}(\Gamma)\to M_{d_{i}}(\CC)$ such that
\[\tr( \sigma_{i}(x))\to \tau(x)\]
for all $x\in C^{*}_{\lambda}(\Gamma).$

(i): $\Gamma\in \mathcal{C}$ if and only if, for every finite $F\subseteq \QQ[i](\Gamma),$ for every $\varepsilon>0,$ there is a map
\[\sigma\colon \QQ[i](\Gamma)\to M_{d}(\CC)\]
for some $d\in \NN$ with
\[\|\sigma(ab)-\sigma(a)\sigma(b)\|<\varepsilon,\mbox{ for all $a,b\in F$,}\]
\[|\|\sigma(a)\|-\|a\||<\varepsilon,\mbox{ for all $a\in F$},\]
\[|\tr(\sigma(a))-\tau(a)|<\varepsilon,\mbox{ for all $a\in F$}.\]

(ii): Every maximally almost periodic amenable group is in $\mathcal{C}.$

(iii): If $\Gamma_{1},\Gamma_{2}\in\mathcal{C},$ and $\Gamma_{1}$ is exact (see \cite{BO} Definition 5.1.1), then $\Gamma_{1}\times \Gamma_{2}\in \mathcal{C}.$

(iv): Suppose $\Gamma$ is a discrete group and $(\Gamma_{n})_{n\in\NN}$ is an increasing sequence of subgroups of $\Gamma$ with
\[\Gamma=\bigcup_{n=1}^{\infty}\Gamma_{n}.\]
If $\Gamma_{n}\in \mathcal{C}$ for all $n\in\NN,$ then $\Gamma\in \mathcal{C}.$

(v): If $\Gamma_{1},\Gamma_{2}\in\mathcal{C},$ then $\Gamma_{1}*\Gamma_{2}\in\mathcal{C}.$

\end{proposition}

\begin{proof}

For all five parts we will use the preceding Lemma.

(i): From the hypothesis and a diagonal argument, we may find a sequence of integers $d_{k}$ and maps
\[\sigma_{k}\colon \QQ[i](\Gamma)\to M_{d_{k}}(\CC)\]
such that for all $x,y\in \QQ[i](\Gamma),\lambda\in\QQ[i]$
\[\|\sigma_{k}(x)\|\to_{k\to\infty}\|x\|,\]
\[\|\sigma_{k}(xy)-\sigma_{k}(x)\sigma_{k}(y)\|\to_{k\to\infty}0,\]
\[\tr(\sigma_{k}(x))\to \tau(x),\]
\[\|\sigma_{k}(x+y)-\sigma_{k}(x)-\sigma_{k}(y)\|\to_{k\to\infty}0,\]
\[\|\sigma_{k}(\lambda x)-\lambda\sigma_{k}(x)\|\to_{k\to\infty}0.\]
We thus have an isometric $*$-homomorphism
\[\sigma\colon \QQ[i](\Gamma)\to \frac{l^{\infty}(\NN,(M_{d_{k}}(\CC))_{k=1}^{\infty})}{c_{0}(\NN,(M_{d_{k}}(\CC))_{k=1}^{\infty})},\]
with
\[\sigma(x)=(\sigma_{k}(x))+c_{0}(\NN,(M_{d_{i}}(\CC))_{k=1}^{\infty})\]
for all $x\in \QQ[i](\Gamma).$ We may extend by $\sigma$ continuity to $C^{*}_{\lambda}(\Gamma)$ to an isometric map
\[\sigma\colon C^{*}_{\lambda}(\Gamma)\to \frac{l^{\infty}(\NN,(M_{d_{k}}(\CC))_{k=1}^{\infty})}{c_{0}(\NN,(M_{d_{k}}(\CC))_{k=1}^{\infty})}.\]
It is not hard to check that $\tau_{\omega}\circ \sigma=\tau$ for every $\omega\in\beta\NN\setminus\NN.$

(ii): This is a consequence of Corollary 4.15 of \cite{BrownMeans}.

(iii): For this part, to avoid ambiguity, we let $\tau_{j}$ be the trace on $C^{*}_{\lambda}(\Gamma_{j}).$ We shall then use $\tau$ for the trace on $C^{*}_{\lambda}(\Gamma_{1}\times\Gamma_{2}).$ We use the notation as in the preceding Lemma. Also, if $A,B$ are $C^{*}$-algebras, we use $A\otimes_{\textnormal{min}}B$ for the minimal tensor product of $A$ and $B$ (see \cite{BO} Definition 3.3.4). We shall use that
\[C^{*}_{\lambda}(\Gamma_{1}\times \Gamma_{2})=C^{*}_{\lambda}(\Gamma_{1})\otimes_{\textnormal{min}} C^{*}_{\lambda}(\Gamma_{2}).\]
 Let
\[\sigma^{(j)}_{k}\colon C^{*}_{\lambda}(\Gamma_{j})\to M_{d_{k}^{(j)}}(\CC),j=1,2\]
be a sequence of norm microstates with
\[\tr(\sigma^{(j)}_{k}(x))\to \tau(x)\mbox{ for all $x\in C^{*}_{\lambda}(\Gamma_{j})$}.\]
For $j=1,2$ we let
\[\sigma^{(j)}\colon C^{*}_{\lambda}(\Gamma_{j})\to\frac{l^{\infty}(\NN,(M_{d_{k}^{(j)}}(\CC))_{k=1}^{\infty})}{c_{0}(\NN,(M_{d_{k}^{(j)}}(\CC))_{k=1}^{\infty})}\]
be the injective $*$-homomorphism defined by
\[\sigma^{(j)}(x)=(\sigma_{k}^{(j)}(x))_{k=1}^{\infty}+c_{0}(\NN,(M_{d_{k}^{(j)}}(\CC))_{k=1}^{\infty}).\]
Since $\Gamma_{1}$ is exact, we have inclusions
\begin{align*}
C^{*}_{\lambda}(\Gamma_{1})\otimes_{\textnormal{min}} C^{*}_{\lambda}(\Gamma_{2})&\to C^{*}_{\lambda}(\Gamma_{1})\otimes_{\textnormal{min}}\frac{l^{\infty}(\NN,(M_{d_{k}^{(2)}}(\CC))_{k=1}^{\infty})}{c_{0}(\NN,(M_{d_{k}^{(2)}}(\CC))_{i=1}^{\infty})}\\
&\cong\frac{C^{*}_{\lambda}(\Gamma_{1})\otimes_{\textnormal{min}}l^{\infty}(\NN,(M_{d_{k}^{(2)}}(\CC))_{k=1}^{\infty})}{C^{*}_{\lambda}(\Gamma_{1})\otimes_{\textnormal{min}}c_{0}(\NN,(M_{d_{k}^{(2)}}(\CC))_{k=1}^{\infty})}\\
&\to \frac{l^{\infty}(\NN,(C^{*}_{\lambda}(\Gamma_{1})\otimes_{\textnormal{min}}M_{d_{k}^{(2)}}(\CC))_{k=1}^{\infty})}{c_{0}(\NN,(C^{*}_{\lambda}(\Gamma_{1})\otimes_{\textnormal{min}}M_{d_{k}^{(2)}}(\CC))_{k=1}^{\infty})},
\end{align*}
the first line being given by $\id\otimes \sigma^{(2)},$ and the second line following from exactness of $C^{*}_{\lambda}(\Gamma_{1}).$ We let
\[\phi\colon C^{*}_{\lambda}(\Gamma_{1})\otimes_{\textnormal{min}} C^{*}_{\lambda}(\Gamma_{2})\to \frac{l^{\infty}(\NN,(C^{*}_{\lambda}(\Gamma_{1})\otimes_{\textnormal{min}}M_{d_{k}^{(2)}}(\CC))_{k=1}^{\infty})}{c_{0}(\NN,(C^{*}_{\lambda}(\Gamma_{1})\otimes_{\textnormal{min}}M_{d_{k}^{(2)}}(\CC))_{k=1}^{\infty})}\]
be the composition of these inclusion maps. If $\omega\in\beta\NN\setminus\NN$ is a free ultrafilter, we let
\[\tau_{1}\otimes\tau_{\omega}\colon  \frac{l^{\infty}(\NN,(C^{*}_{\lambda}(\Gamma_{1})\otimes_{\textnormal{min}}M_{d_{k}^{(2)}}(\CC))_{k=1}^{\infty})}{c_{0}(\NN,(C^{*}_{\lambda}(\Gamma_{1})\otimes_{\textnormal{min}}M_{d_{k}^{(2)}}(\CC))_{k=1}^{\infty})}\to \CC\]
be defined by
\[\tau_{1}\otimes\tau_{\omega}((x_{k})_{k=1}^{\infty}+c_{0}(\NN,(C^{*}_{\lambda}(\Gamma_{1})\otimes_{\textnormal{min}}M_{d_{k}^{(2)}}(\CC))_{k=1}^{\infty}))=\lim_{k\to\omega}\tau_{1}\otimes\tr(x_{k}).\]
It is easy to see that
\[\tau_{1}\otimes\tau_{\omega}\circ \phi=\tau\]
for all $\omega\in\beta\NN\setminus\NN.$ As in the preceding Lemma, we have that for all $x_{1},$ $\dots$, $x_{p}\in C^{*}_{\lambda}(\Gamma_{1})$, $y_{1},\dots,y_{p}\in C^{*}_{\lambda}(\Gamma_{2}),$
\[\left\|\sum_{j=1}^{p}x_{j}\otimes y_{j}\right\|=\lim_{k\to\infty}\left\|\sum_{j=1}^{p}x_{j}\otimes \sigma_{k}^{(2)}(y_{j})\right\|.\]
But now applying the same argument (using exactness of $M_{d_{k}^{(2)}}(\CC)$) we have for all $k\in\NN,$
\[\left\|\sum_{j=1}^{p}x_{j}\otimes \sigma_{k}^{(2)}(y_{j})\right\|=\lim_{l\to\infty}\left\|\sum_{j=1}^{p}\sigma_{l}^{(1)}(x_{j})\otimes \sigma_{k}^{(2)}(y_{j})\right\|.\]
So
\[\left\|\sum_{j=1}^{k}x_{j}\otimes y_{j}\right\|=\lim_{k\to\infty}\lim_{l\to\infty}\left\|\sum_{j=1}^{p}\sigma_{l}^{(1)}(x_{j})\otimes \sigma_{k}^{(2)}(y_{j})\right\|.\]
Additionally, it is easy to see that
\[\lim_{k\to\infty}\lim_{l\to\infty}\sum_{j=1}^{p}\tr(\sigma_{l}^{(1)}(x_{j}))\tr(\sigma_{k}^{(2)}(x_{j}))=\sum_{j=1}^{p}\tau_{1}(x_{j})\tau_{2}(y_{j}).\]
From these two limiting statements, it is not hard to argue that the conditions of $(i)$ hold.

(iv): Since every finite subset of $\QQ[i](\Gamma)$ is contained in $\QQ[i](\Gamma_{n})$ for some $n\in\NN,$ and we have an isometric, trace-preserving inclusion
\[C^{*}_{\lambda}(\Gamma_{n})\subseteq C^{*}_{\lambda}(\Gamma)\]
this is obvious from $(i).$

(v): As noted in $(iv),$ if $\Lambda$ is a subgroup of $\Gamma,$ then we have a canonical trace-preserving isometric inclusion
\[C^{*}_{\lambda}(\Lambda)\subseteq C^{*}_{\lambda}(\Gamma).\]
From this observation and $(iv)$ it suffices to assume that $\Gamma_{1},\Gamma_{2}$ are finitely generated. Let $s_{1}^{(j)},\dots,s_{t_{j}}^{(j)}$ be generators of $\Gamma_{j},j=1,2.$ Let
\[\pi \colon\FF_{t_{1}+t_{2}}\to C^{*}_{\lambda}(\Gamma_{1}*\Gamma_{2})\]
be the unique homomorphism defined by
\[\pi(a_{l})=\begin{cases}
s_{l}^{(1)},&\textnormal{ if $1\leq l\leq t_{1}$}\\
s_{l-t_{1}}^{(2)},& \textnormal{ if $t_{1}< l\leq t_{1}+t_{2}$.}
\end{cases}\]
We denote by $\pi$ its unique linear extension to a map
\[\CC(\FF_{t_{1}+t_{2}})\to \CC(\Gamma_{1}*\Gamma_{2}).\]
Let $d_{k}^{(j)},j=1,2$ be a sequence of integers and
\[\sigma^{(j)}_{k}\colon C^{*}_{\lambda}(\Gamma_{j})\to M_{d_{k}^{(j)}}(\CC)\]
be a sequence of norm microstates with
\[\tr\circ \sigma^{(j)}_{k}(x)\to_{k\to\infty} \tau(x)\]
for all $x\in C^{*}_{\lambda}(\Gamma).$  By replacing $\sigma^{(1)}_{k},\sigma^{(2)}_{k}$ with the maps
\[C^{*}_{\lambda}(\Gamma_{1})\to M_{d_{k}^{(1)}}(\CC)\otimes M_{d_{k}^{(2)}}(\CC),x\mapsto \sigma_{k}^{(1)}(x)\otimes 1,\]
\[C^{*}_{\lambda}(\Gamma_{2})\to M_{d_{k}^{(1)}}(\CC)\otimes M_{d_{k}^{(2)}}(\CC),x\mapsto 1\otimes \sigma_{k}^{(2)}(x),\]
we may assume that $d_{k}^{(1)}=d_{k}^{(2)}.$ Hence, we shall use $d_{k}$ for $d_{k}^{(1)}$ (or $d_{k}^{(2)}$). By the remarks after Theorem \ref{T:inftycase}  , we may assume that $\sigma^{(j)}_{k}(g)\in \mathcal{U}(d_{k})$ for every $g\in\Gamma_{j}.$ For any $U\in\mathcal{U}(d_{k}),$ let
\[\pi_{k,U}\colon \FF_{t_{1}+t_{2}}\to \mathcal{U}(d_{k})\]
be the unique homomorphism defined by
\[\pi(a_{l})=\begin{cases}
U^{*}\sigma_{k}^{(1)}(s_{l}^{(1)})U,&\textnormal{ if $1\leq l\leq t_{1}$}\\
\sigma_{k}^{(2)}(s_{l-t_{1}}^{(2)}),& \textnormal{ if $t_{1}< l\leq t_{1}+t_{2}$.}
\end{cases}\]
By \cite{MaleCollins} Theorem 1.5, there is a sequence $U_{k}\in \mathcal{U}(d_{k})$ such that
\[\|\pi_{k,U_{k}}(x)\|\to \|\pi(x)\|\mbox{ for all $x\in \CC(\FF_{t_{1}+t_{2}})$},\]
\[\tr(\pi_{k,U_{k}}(x))\to \tau(x)\mbox{ for al $x\in \CC(\FF_{t_{1}+t_{2}})$}.\]
From this it is not hard to show that the conditions of $(i)$ hold.

\end{proof}

We note that by \cite{BrownMeans} Proposition 4.1.4 and Choi-Effros lifting theorem (see \cite{BO} Theorem C.3), if every amenable group were in $\mathcal{C}$ it would follow that $C^{*}_{\lambda}(\Gamma)$ is quasidiagonal for every amenable group. This is known as Rosenberg's conjecture, and is an open problem of major current interest (see e.g. \cite{OzRoSa},\cite{JoMaEc}). The case $\Gamma_{1}=\Gamma_{2}=\ZZ$ of $(v)$ was first proved by Haagerup and Thorbj\o rnsen in \cite{HTExt} Theorem B (combining with \cite{VoicRandom}  Theorem 3.8).

\section{ Definition of $l^{p}$-Dimension Using Vectors}

	In this section, we give a definition of the extended von Neumann dimension using vectors instead of almost equivariant operators. This may be conceptually simpler, as we do not have to deal with the technicalities involving changing domains inherent to the definition of $\Hom_{\Gamma}(\cdots).$ The definition is  much simpler and requires fewer preliminaries as well. However, for many theoretical purposes it will still be easier to use the notion of almost equivariant operators. We will give this alternate definition after the following lemma.

\begin{lemma} Let $V$ be a finite-dimensional Banach space, let $B$ be a finite set and $(v_{\beta})_{\beta\in B}\in V^{B}$ such that $V=\Span\{v_{\beta};\beta\in B\}.$ Then for any $\eta>0,$ there is a $\delta>0$ with the following property. If $Y$ is a Banach space and $(\xi_{\beta})_{\beta\in B}\in Y^{B}$ have the property that for all $c\in l^{1}(B)$ with $\|c\|_{1}\leq 1$
\[\left\|\sum_{\beta\in B}c(\beta)\xi_{\beta}\right\|\leq \delta+\left\|\sum_{\beta\in B}c(\beta)v_{\beta}\right\|,\]
then there is a $T\colon V\to Y$ with $\|T\|\leq 1$ such that
\[\|T(v_{\beta})-\xi_{\beta}\|<\eta,\]
for all $\beta\in B.$
\end{lemma}

\begin{proof} Let $A\subseteq B$ be such that $\{v_{\alpha}:\alpha\in A\}$ is a basis for $V.$ For $Y$ and $(\xi_{\beta})_{\beta\in B}$ as in the statement of the Lemma, let $\widetidle{T}\colon V\to Y$ be the unique linear operator satisfying
\[\widetidle{T}(v_{\alpha})=\xi_{\alpha},\]
for $\alpha\in A.$ By finite-dimensionality, there is a $C_{V}>0$  with
\[\sum_{\alpha\in A}|c_{\alpha}|\leq C_{V}\left\|\sum_{\alpha\in A}c_{\alpha}v_{\alpha}\right\| .\]
Fix $\xi\in V,$ with $\|\xi\|=1,$ and write
\[\xi=\sum_{\alpha\in A}c_{\alpha}v_{\alpha}.\]
Then by hypothesis,
\[\frac{\|\widetidle{T}(\xi)\|}{\sum_{\alpha\in A}|c_{\alpha}|}\leq \delta+\frac{\left\|\sum_{\alpha\in A}c_{\alpha}v_{\alpha}\right\|}{\sum_{\alpha\in A}|c_{\alpha}|}=\delta+\frac{1}{\sum_{\alpha\in A}|c_{\alpha}|}.\]
So
\[\|\widetilde{T}(\xi)\|\leq 1+\delta\sum_{\alpha\in A}|c_{\alpha}|\leq 1+\delta C_{V}.\]
As $\xi$ was arbitrary,
\[\|\widetidle{T}\|\leq C_{V}\delta+1.\]

	Set $T=\frac{1}{1+C_{V}\delta}\widetidle{T},$ then $\|T\|\leq 1.$ For each $\beta\in B\setminus A$ choose $a_{\alpha}^{(\beta)},\alpha\in A$ such that
\[v_{\beta}=\sum_{\alpha\in A}a_{\alpha}^{(\beta)}v_{\alpha}.\]
For $\beta\in B\setminus A,$ let
\[A_{\beta}=\sum_{\alpha\in A}|a_{\alpha}^{(\beta)}|.\]
For $\beta\in B\setminus A,$ define $c^{(\beta)}\in l^{1}(B)$ by
\[c^{(\beta)}(\alpha)=\frac{a_{\alpha}^{(\beta)}}{1+A_{\beta}},\mbox{ for all $\alpha\in A$}\]
\[c^{(\beta)}(\beta)=-\frac{1}{1+A_{\beta}},\]
\[c^{(\beta)}(\beta')=0,\mbox{ for all $\beta'\in B\setminus(A\cup \{\beta\})$}.\]
Then for $\beta\in B\setminus A,$ $\|c^{(\beta)}\|_{1}=1$ and
\[\sum_{\beta'\in B}c^{(\beta)}(\beta')v_{\beta}=0.\]
Thus by our hypothesis for $\beta\in B\setminus A,$
\[\frac{1}{1+A_{\beta}}\|\xi_{\alpha}-\widetidle{T}(v_{\alpha})\|=\left\|\sum_{\beta'\in B}c^{(\beta)}(\beta')\xi_{\beta'}\right\|\leq \delta,\]
so
\[\|\xi_{\beta}-\widetilde{T}(\xi_{\beta})\|\leq (1+A_{\beta})\delta.\]
Trivially for all $\alpha\in A$ we have
\[\|\xi_{\alpha}-\widetilde{T}(\xi_{\alpha})\|=0.\]
For all $\beta\in B,$
\[\|\widetilde{T}(v_{\beta})-T(v_{\beta})\|=\left|1-\frac{1}{1+\delta C_{V}}\right|\|\widetilde{T}(v_{\beta})\|\leq \delta C_{V}\|v_{\beta}\|.\]
Set
\[M=\max\left(\max_{\beta\in B\setminus A}1+A_{\beta}+C_{V}\|v_{\beta}\|,\max_{\alpha\in A}C_{V}\|v_{\alpha}\|\right).\]
Then $M$ does not depend upon $Y,$ or $\varepsilon$ and for all $\beta\in B$
\[\|T(v_{\beta})-\xi_{\beta}\|\leq 2M\delta,\]
so if $\delta<\frac{\eta}{2M},$ we are done.

\end{proof}

\begin{definition} \emph{Let $V$ be a Banach space with a uniformly bounded action of a countable discrete group $\Gamma$ and $\sigma_{i}\colon \Gamma\to \Isom(V_{i})$ with $V_{i}$ finite-dimensional. We let $\Vect_{\Gamma}(S,F,m,\delta,\sigma_{i})$ be all $m$-tuples $(\xi_{j})_{j=1}^{m}$ of  vectors in $V$ such that for all $(c_{g_{1},\dots,g_{l},j})_{1\leq l,j\leq m,g_{1},\dots,g_{l}\in F}$ with $\sum_{\substack{g_{1},\dots,g_{l}\in F\\1\leq j,l\leq m}}|c_{g_{1},\dots,g_{m},j}|\leq 1,$ we have}
\[\left\|\sum_{\substack{g_{1},\dots,g_{l}\in F\\1\leq j.l\leq m}}c_{g_{1},\dots,g_{l},j}\sigma_{i}(g_{1})\cdots\sigma_{i}(g_{m})\xi_{j}\right\|\leq \delta+\left\|\sum_{\substack{g_{1},\dots,g_{l}\in F\\1\leq j,l\leq m}}c_{g_{1},\dots,g_{l},j}g_{1}\cdots g_{l}x_{j}\right\|.\]
\emph{Set}
\[\vdim_{\Sigma}(S,F,m,\delta,\varepsilon,\rho)=\limsup_{i\to \infty}\frac{1}{\dim V_{i}}d_{\varepsilon}(\Vect_{\Gamma}(S,F,m,\delta,\sigma_{i}),\rho_{V_{i}}),\]
\[\vdim_{\Sigma}(S,\varepsilon,\rho)=\inf_{F,m,\delta}\vdim_{\Sigma}(S,F,m,\delta,\varepsilon,\rho),\]
\[\vdim_{\Sigma}(S,\rho)=\sup_{\varepsilon>0}\vdim_{\Sigma}(S,\varepsilon,\rho).\]

\end{definition}

\begin{proposition}Let $V$ be a Banach space with a uniformly bounded action of a countable discrete group $\Gamma$ and $\sigma_{i}\colon \Gamma\to \Isom(V_{i})$ with $V_{i}$ finite-dimensional. Then for any dynamically generating sequence $S,$ and any product norm $\rho,$
\[\dim_{\Sigma}(V,\Gamma)=\vdim_{\Sigma}(S,\rho).\]
\end{proposition}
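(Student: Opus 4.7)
My plan is to prove $\dim_{\Sigma}(X,\Gamma)=\vdim_{\Sigma}(S,\rho)$ by establishing both inequalities. The key move is to view every tuple $(\xi_j)_{j=1}^m\in\Vect_{\Gamma}$ as its zero extension in $l^{\infty}(\NN,V_i)$, so that $\alpha_S(\Hom_{\Gamma})$ and $\Vect_{\Gamma}$ sit inside a common ambient space where they can be compared via the pseudonorm $\rho_{V_i}$.

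The inequality $\dim_{\Sigma}(X,\Gamma)\leq \vdim_{\Sigma}(S,\rho)$ is the easy direction. Given $T\in\Hom_{\Gamma}(S,F,m,\delta,\sigma_i)$ with $\|T\|\leq 1$, I would set $\xi_j=T(x_j)$ and use approximate equivariance together with the triangle inequality to verify
\[
\left\|\sum c_{g_1,\ldots,g_m,j}\,\sigma_i(g_1)\cdots\sigma_i(g_m)\xi_j\right\|\leq \left\|T\left(\sum c_{g_1,\ldots,g_m,j}\,g_1\cdots g_m x_j\right)\right\|+\delta\leq \left\|\sum c_{g_1,\ldots,g_m,j}\,g_1\cdots g_m x_j\right\|+\delta
\]
for every $c$ with $\sum|c|\leq 1$; hence $\alpha_S(\Hom_{\Gamma}(S,F,m,\delta,\sigma_i))\subseteq \Vect_{\Gamma}(S,F,m,\delta,\sigma_i)$ for every $(F,m,\delta)$. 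Since $d_{\varepsilon}$ respects inclusion, this yields $\dim_{\Sigma}(S,\varepsilon,\rho)\leq \vdim_{\Sigma}(S,\varepsilon,\rho)$, and supping over $\varepsilon>0$ gives the bound.

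For the reverse inequality, the preceding Lemma is the main tool. Fix $\varepsilon'>0$ and put $C=\sup\{\rho(f):\|f\|_{\infty}\leq 1\}$, which is finite because the product norm $\rho$ agrees with the (compact) pointwise topology on the $l^{\infty}$-unit ball. For any finite $F\ni e$, any $m\in\NN$, and small $\delta>0$, I will apply the previous Lemma to $V=X_{F,m}$ with spanning set $\{g_1\cdots g_m x_j:g_l\in F,\,1\leq j\leq m\}$ and approximation target $\delta/2$; this yields a threshold $\delta_1=\delta_1(F,m,\delta)>0$ independent of $i$. For each $(\xi_j)_{j=1}^m\in\Vect_{\Gamma}(S,F,m,\delta_1,\sigma_i)$ the Lemma produces $T\colon X_{F,m}\to V_i$ with $\|T\|\leq 1$ and $\|T(g_1\cdots g_m x_j)-\sigma_i(g_1)\cdots\sigma_i(g_m)\xi_j\|<\delta/2$ for each spanning vector. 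Specializing $g_l=e$ (using $\sigma_i(e)=1$) gives $\|T(x_j)-\xi_j\|<\delta/2$, and padding any shorter word $g_1\cdots g_k$ (with $1\leq k\leq m$) out to length $m$ by identities, combined with the isometry of $\sigma_i(g)$, yields $\|T(g_1\cdots g_k x_j)-\sigma_i(g_1)\cdots\sigma_i(g_k)T(x_j)\|<\delta$, so $T\in\Hom_{\Gamma}(S,F,m,\delta,\sigma_i)$.

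To finish, suppose a subspace $W\subseteq l^{\infty}(\NN,V_i)$ $\varepsilon$-contains $\alpha_S(\Hom_{\Gamma}(S,F,m,\delta,\sigma_i))$ in $\rho_{V_i}$. Then for each $(\xi_j)\in\Vect_{\Gamma}(S,F,m,\delta_1,\sigma_i)$ and its associated $T$, I estimate $\rho_{V_i}(\alpha_S(T)-(\xi_j)_{j=1}^m)=\rho((\|T(x_j)-\xi_j\|)_{j=1}^m,0,\ldots)\leq (\delta/2)\rho(\chi_{[1,m]})\leq \delta C/2$, so $(\xi_j)$ lies within $\rho_{V_i}$-distance $\varepsilon+\delta C/2$ of $W$. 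Choosing $\varepsilon<\varepsilon'/2$ and $\delta<\varepsilon'/(C+1)$ forces this below $\varepsilon'$, giving $d_{\varepsilon'}(\Vect_{\Gamma}(S,F,m,\delta_1,\sigma_i))\leq d_{\varepsilon}(\alpha_S(\Hom_{\Gamma}(S,F,m,\delta,\sigma_i)))$. Dividing by $\dim V_i$, passing to $\limsup_i$, and taking infimum over $(F,m,\delta)$ (noting $\delta_1\to 0$ as $\delta\to 0$ for fixed $(F,m)$) will give $\vdim_{\Sigma}(S,\varepsilon',\rho)\leq \dim_{\Sigma}(S,\varepsilon,\rho)\leq \dim_{\Sigma}(X,\Gamma)$, and supping over $\varepsilon'$ will complete the proof. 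The main obstacle is precisely this last parameter match: the Lemma's $\delta_1$ depends on the finite-dimensional space $X_{F,m}$ and on the chosen $\delta$, but not on $i$, so the infimum in $\vdim_{\Sigma}$ can be realized through $\delta'=\delta_1(F,m,\delta)$; the product norm bound $\rho(\chi_{\NN})\leq C$, uniform in $m$, is what prevents per-coordinate errors of size $\delta/2$ from aggregating into an unbounded $\rho_{V_i}$-error.
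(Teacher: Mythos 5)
Your proposal is correct and follows essentially the same route as the paper: the easy direction comes from the inclusion $\alpha_{S}(\Hom_{\Gamma}(S,F,m,\delta,\sigma_{i}))\subseteq \Vect_{\Gamma}(S,F,m,\delta,\sigma_{i})$ via approximate equivariance and $\|T\|\leq 1$, and the reverse direction applies the preceding lemma to $V=X_{F,m}$ with spanning set $\{g_{1}\cdots g_{m}x_{j}\}$ to recover an almost equivariant $T$ with $T(x_{j})$ close to $\xi_{j}$. Your write-up is somewhat more careful than the paper's about the $\rho_{V_{i}}$-bookkeeping (the bound $\rho(\chi_{[1,m]})\leq C$ from compactness of the unit ball) and about the uniformity of the lemma's threshold $\delta_{1}$ in $i$, but these are exactly the points the paper leaves implicit.
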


\begin{proof} Let $S=(x_{j})_{j=1}^{\infty}.$  Fix $F\subseteq \Gamma$ finite with $e\in F$, $m\in \NN,$ $\delta>0.$ Suppose that $T\in \Hom_{\Gamma}(S,F,m,\delta,\sigma_{i})$ and set $\xi_{j}=T(x_{j}).$ Then for all $(c_{g_{1},\dots,g_{l},j})_{g_{1},\dots,g_{l}\in F,1\leq j,l\leq m}$ with
\[\sum_{\substack{g_{1},\dots,g_{l}\in F\\1\leq j,l\leq m}}|c_{g_{1},\dots,g_{l},j}|\leq 1,\]
we have
\begin{align*}
\left\|\sum_{\substack{g_{1},\dots,g_{l}\in F\\1\leq j,l\leq m}}c_{g_{1},\dots,g_{m},j}\sigma_{i}(g_{1})\cdots\sigma_{i}(g_{l})\xi_{j}\right\|&\leq\delta+\left\|T\left(\sum_{\substack{g_{1},\dots,g_{l}\in F\\1\leq j,l\leq m}}c_{g_{1},\dots,g_{l},j}g_{1}\cdots g_{l}\xi_{j}\right)\right\|\\
&\leq \delta+\left\|\sum_{\substack{g_{1},\dots,g_{l}\in F\\1\leq j,l\leq m}}c_{g_{1},\dots,g_{l},j}g_{1}\cdots g_{l}\xi_{j}\right\|.
\end{align*}
So $(\xi_{j})_{j=1}^{m}\in \Vect_{\Gamma}(S,F,m,\delta,\sigma_{i})$ and $\vdim\leq \dim.$

	For the opposite inequality, let $\varepsilon>0$ and let $M=\sup_{j}\|x_{j}\|.$ Since $\rho$ is a product norm, we may find an  $N\in \NN$ and a $\kappa>0$ such that if $f\in l^{\infty}(\NN),$ if $\|f\|_{\infty}\leq M$ and
\[\max_{1\leq j\leq N}|f(j)|<\kappa,\]
then
\[\rho(f)<\varepsilon.\]
Fix $F\subseteq \Gamma$ finite with $e\in F$  and $m\in \NN$ with $m\geq N.$ Let $\delta'>0$ be sufficiently small depending upon $\kappa,$ in a manner to be determined later. Set
\[B=\bigsqcup_{l=1}^{m}\{(g_{1},\dots,g_{l},j):g_{1},\dots,g_{l}\in F,1\leq j\leq m\},\]
\[V=V_{F,m},\]
\[v_{\beta}=g_{1}\cdots g_{l}x_{j},\mbox{ if $\beta=(g_{1},\dots,g_{l},j)\in B$},\]
\[\eta=\delta'.\]
	Let $\delta>0$ be as in the preceding lemma for this $B,V,(v_{\beta})_{\beta\in B},\eta.$  If $(\xi_{j})_{j=1}^{m}\in \Vect_{\Gamma}(S,F,m,\delta,\sigma_{i}),$ then  by the preceding lemma we can find a $T\colon V_{F,m}\to V_{i}$ with $\|T\|\leq 1$ and $\|T(g_{1}\cdots g_{l}x_{j})-\sigma_{i}(g_{1})\cdots \sigma_{i}(g_{l})\xi_{j}\|<\delta',$ for all $g_{1},\dots,g_{l}\in F,1\leq j,l\leq m.$  Thus for all $1\leq j,l\leq m,g_{1},\dots,g_{l}\in F,$
\[\|T(g_{1}\cdots g_{l}x_{j})-\sigma_{i}(g_{1})\cdots \sigma_{i}(g_{l})T(x_{j})\|<2\delta'.\]
Thus $T\in \Hom_{\Gamma}(S,F,m,2\delta',\sigma_{i}),$ and
\[\max_{1\leq j\leq m}\|T(x_{j})-\xi_{j}\|<\delta',\]
since $e\in F.$ So if we choose $\delta'<\kappa,$  then since $m\geq N,$ our choice of $\kappa$ implies
\[\alpha_{S}(\Hom_{\Gamma}(S,F,m,\delta,\sigma_{i}))\subseteq_{\varepsilon,\rho_{V_{i}}}\Vect_{\Gamma}(S,F,m,\delta,\sigma_{i}),\]
so
\[d_{2\varepsilon}(\alpha_{S}(\Hom_{\Gamma}(S,F,m,\delta,\sigma_{i}),\rho_{V_{i}})\leq d_{\varepsilon}(\Vect_{\Gamma}(S,F,m,\delta,\sigma_{i}),\rho_{V_{i}}).\]
Taking limits in the appropriate order, we see that $\dim\leq \vdim.$

\end{proof}

\section{$l^{p}$-Betti Numbers of Free Groups}\label{S:l^{p}Betti}

	Let $X$ be a CW complex and let $\Delta_{n}(X)$ be the collection of $n$-simplices of $X.$ Suppose that $\Gamma$ acts properly on $X$ with compact quotient, preserving the simplicial structure. For $v_{0},\dots,v_{n}\in X,$ let
\[[v_{0},v_{1},\dots,v_{n}]\]
be the simplex spanned by $v_{0},\dots,v_{n}.$ Let
\[V_{n}(X)=\{(v_{0},\dots,v_{n})\in X:[v_{0},\dots,v_{n}]\in \Delta_{n}(X)\}.\]
We abuse notation and let $l^{p}(\Delta_{n}(X))$ for $1\leq p\leq \infty$ be the set of all functions $f\colon V_{n}(X)\to \CC$ such that
\[f(v_{\sigma(0)},\dots,v_{\sigma(n)})=(\sgn \sigma)f(v_{0},\dots,v_{n}),\mbox{ for $\sigma\in \Sym(\{0,\dots,n\}),$}\]
\[\sum_{[v_{0},\dots,v_{n}]\in \Delta_{n}(X)}|f(v_{0},\dots,v_{n})|^{p}<\infty,\mbox{ for $p<\infty$,}\]
\[\sup_{[v_{0},\dots,v_{n}]\in \Delta_{n}(X)}|f( v_{0},\dots,v_{n})|<\infty,\mbox{ for$p=\infty$.}\]
  By our antisymmetry condition the above sum is unchanged if we use a different representative for $[v_{0},\dots,v_{n}].$ On $l^{p}(\Delta_{n}(X))$ we use the norm
\[\|f\|_{p}^{p}=\sum_{v\in \Delta_{n}(X)}|f(v_{0},\dots,v_{n})|^{p},\mbox{ for $p<\infty$,}\]
\[\|f\|_{\infty}=\sup_{[v_{0},\dots,v_{n}]\in \Delta_{n}(X)}|f(v_{0},\dots,v_{n})|.\]
Define the discrete differential $\delta\colon l^{p}(\Delta_{n-1}(X))\to l^{p}(\Delta_{n}(X))$ by
\[(\delta f)(v_{0},\dots,v_{n})=\sum_{j=0}^{n}(-1)^{j}f(v_{0},\dots,\widehat{v_{j}},\dots,v_{n}),\]
where the hat indicates a term omitted. Note that $\delta f$ satisfies the appropriate antisymmetry condition. Define the $n^{th}$ $l^{p}$-cohomology space of $X$ by
\[H^{n}_{l^{p}}(X)=\frac{\ker(\delta)\cap l^{p}(\Delta_{n}(X))}{\overline{\delta(l^{p}(\Delta_{n-1}(X))}}.\]
We define the $l^{p}$-Betti numbers of $X$ with respect to $\Gamma$ by
\[\beta^{(p)}_{\Sigma,n}(X,\Gamma)=\dim_{\Sigma,l^{p}}(H^{n}_{l^{p}}(X),\Gamma).\]

	The definition of $l^{p}$-cohomology is due to Gromov in \cite{GromovLP}. We also refer the reader to \cite{Pansu} for a survey on results on $l^{p}$-cohomology. It is known that if $X$ is contractible and $\pi_{1}(X/\Gamma)\cong \Gamma,$  then the $l^{p}$-cohomology space only depends upon $\Gamma$ (see \cite{GromovLP} page 219). If $\Gamma$ is sofic, we may use $l^{p}$-dimension to define
\[H^{n}_{l^{p}}(\Gamma)=H^{n}_{l^{p}}(X,\Gamma),\]
\[\beta^{(p)}_{\Sigma,n}(\Gamma)=\beta^{(p)}_{\Sigma,n}(X,\Gamma),\]
for such $X.$  We will call these the $l^{p}$-Betti numbers of $\Gamma$ (with respect to $\Sigma$). The definition above for $p=2$ goes back to Atiyah in \cite{Atiyah}. We remark that this is the first definition of an $l^{p}$-Betti numbers for a class of  groups. The reason for this is that one needs to use a dimenison function valid for actions on $l^{p}$-spaces in order to defined $l^{p}$-Betti numbers and this is what we have done in \cite{Me}.

	We also consider $l^{p}$-homology. Define $\del \colon l^{p}(\Delta_{n}(X))\to l^{p}(\Delta_{n-1}(X))$ by
\[\del f(v_{0},\dots, v_{n-1})=\sum_{x:[v_{0},\dots, v_{n-1},x]\in \Delta_{n}(X)}f(v_{0},\dots,v_{n-1},x).\]
We use $T^{t}$ for the Banach space adjoint of a bounded $T\colon V\to W$ between Banach spaces $V,W.$ By direct computation
\[(\del \colon l^{p'}(\Delta_{n}(X))\to l^{p'}(\Delta_{n-1}(X)))=(\delta\colon l^{p}(\Delta_{n-1}(X))\to l^{p}(\Delta_{n}(X)))^{t},\]
when $\frac{1}{p}+\frac{1}{p'}=1.$ Define the $l^{p}$-homology of $X$ by
\[H_{n}^{l^{p}}(X)=\frac{\ker(\del)\cap l^{p}(\Delta_{n}(X))}{\overline{\del(l^{p}(\Delta_{n+1}(X))}}.\]

	We shall be interested in the $l^{p}$-Betti numbers of free groups. Fix $n\in \NN$ and consider the free group $\FF_{n}$ on $n$ letters $a_{1},\dots,a_{n}.$ Let $G$ be the Cayley graph of $\FF_{n}$ with respect to $a_{1},\dots,a_{n},$ we regard the edges of $G$ as oriented. There is a natural $1$-dimensional CW complex $X$ associated to $G,$ whose $0$-simplices are the vertices of $G,$ and whose $1$-simplices are the edges of $G,$ and whose attaching maps are determined by incidence of edges in the natural way (see \cite{Ha} page 83).  Then $X$ is contractible, since $G$ is a tree. Also $\pi_{1}(X/\FF_{n})\cong \FF_{n},$ so the $l^{p}$-cohomology of $G$ is the $l^{p}$-cohomology of $\FF_{n}.$  Let $E(\FF_{n})$ denote the set of edges of $\FF_{n}.$ Then $l^{p}(E(\FF_{n}))$ as defined above is the set of all functions $f\colon E(\FF_{n})\to \CC$ such that
\[f(x,s)=-f(s,x)\mbox{ if $(s,x)\in E(\FF_{n})$},\]
\[\sum_{j=1}^{n}\sum_{x\in \FF_{n}}|f(x,xa_{j})|^{p}<\infty\]
with the norm
\[\|f\|_{p}^{p}=\sum_{j=1}^{n}\sum_{x\in \FF_{n}}|f(x,xa_{j})|^{p}.\]
Note that this is indeed a norm on $l^{p}(E(\FF_{n})),$ and that $\FF_{n}$ acts isometrically on $l^{p}(E(\FF_{n}))$ by left translation. Also $l^{p}(E(\FF_{n}))$ is isomorphic to $l^{p}(\FF_{n})^{\oplus n}$ with respect to this action. If $(x,s)\in E(\FF_{n}),$ we let $\E_{(x,s)}$ be the function on $E(\FF_{n})$ such that
\[\E_{(x,s)}(y,t)=0\mbox{ if $\{x,s\}\ne \{y,t\}$}\]
\[\E_{(x,s)}(x,s)=1\]
\[\E_{(x,s)}(s,x)=-1.\]
We think of $\E_{(x,s)}$ as representing the edge going from $x$ to $s.$

The discrete differential $\delta\colon l^{p}(\FF_{n})\to l^{p}(E(\FF_{n}))$ we defined above is given by
\[(\delta f)(x,s)=f(s)-f(x)\mbox{ $(x,s)\in E(\FF_{n}))$}.\]
The corresponding  $l^{p}$-cohomology space is given by
\[H^{1}_{l^{p}}(\FF_{n})=l^{p}(E(\FF_{n}))/\overline{\delta(l^{p}(\FF_{n})}.\]
Also, $\del\colon l^{p}(E(\FF_{n}))\to l^{p}(\FF_{n})$ is given by
\[(\del f)(x)=\sum_{j=1}^{n}f(x,xa_{j})-\sum_{j=1}^{n}f(xa_{j}^{-1},x).\]
In this section, we compute the $l^{p}$-Betti numbers
\[\beta_{\Sigma,1}^{(p)}(\FF_{n}),\]
for $1\leq p\leq 2.$

	Let $\Gamma$ be a countable discrete group. We define $\rho\colon \Gamma\to B(l^{p}(\Gamma))$ by
\[(\rho(g)f)(x)=f(xg).\]

\begin{lemma}\label{L:RIHTDT} Let $n\in \NN,$ with $n\geq 2.$ Fix $1\leq p<\infty.$ There is a $C>0$ such that
\[\|\delta f\|_{p}\geq C\|f\|_{p},\]
for all $f\in l^{p}(\FF_{n}).$ In particular, the image of $\delta$ is closed.
\end{lemma}

\begin{proof}
	Assume the lemma is false. Then we can find $f_{k}\in l^{p}(\FF_{n}),$ with $\|f_{k}\|_{p}=1$ and
\[\|\delta f_{k}\|_{p}\to 0.\]
By direct computation
\[\|\delta f_{k}\|_{p}^{p}=\sum_{j=1}^{n}\|\rho(a_{j})f_{k}-f_{k}\|_{p}^{p},\]
where $a_{1},\dots,a_{n}$ are the free generators of $\FF_{n}.$ By the triangle inequality,
\[\|\rho(a_{j})|f_{k}|-|f_{k}|\|_{p}\leq \|\rho(a_{j})f_{k}-f_{k}\|_{p}\]
so we may assume $f_{k}\geq 0.$ Let $g_{n}=f_{k}^{p},$ then $g_{k}\in l^{1}(\FF_{n})$ and $\|g_{k}\|_{1}=1.$
	
	By calculus, for real numbers $a,b\geq 0$ we have
\[|a^{p}-b^{p}|\leq p\max(|a|^{p-1},|b|^{p-1})|a-b|\leq p|a|^{p-1}|a-b|+p|b|^{p-1}|a-b|.\]
Thus
\begin{align*}
\|\rho(a_{j})g_{k}-g_{k}\|_{1}&\leq p\sum_{g\in \FF_{n}}|f_{k}(g)|^{p-1}|f_{k}(g)-f_{k}(ga_{j})|+|f_{k}(ga_{j})|^{p-1}||f_{k}(ga_{j})-f_{k}(g)|\\
&\leq 2p\|f_{k}-\rho(a_{j})f_{k}\|_{p},
\end{align*}
where in the last line we use H\"{o}lder's inequality and that $\|f_{k}\|_{p}=1.$  Thus
\[\|\rho(a_{j})g_{k}-g_{k}\|_{1}\to 0.\]
Since $\{a_{1},\dots,a_{n}\}$ generate $\FF_{n},$ it follows that
\[\|\rho(x)g_{k}-g_{k}\|_{1}\to 0\]
for all $x\in \FF_{n}.$ By \cite{BO} Theorem 2.6.8 (2), this implies that $\FF_{n}$ is amenable. It is well known that $\FF_{n}$ is not amenable so we have reached a contradiction.

\end{proof}

\begin{lemma}\label{L:generateco} Fix $n\in \NN,$ $1\leq p<\infty.$ Then the set of all images of the elements $\E_{(e,a_{1})},\dots,$ $\E_{(e,a_{n-1})}$ is dynamically generating for $H^{1}_{l^{p}}(\FF_{n}).$

\end{lemma}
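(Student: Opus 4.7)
The plan is to show that for every $g\in\FF_{n}$, the class $[\E_{(g,ga_{n})}]\in H^{1}_{l^{p}}(\FF_{n})$ lies in the closed linear span $V$ of the $\FF_{n}$-translates of $\E_{(e,a_{1})},\ldots,\E_{(e,a_{n-1})}$. Since the $\FF_{n}$-translates of $\E_{(e,a_{1})},\ldots,\E_{(e,a_{n})}$ exhaust all the basis elements $\E_{(g,ga_{j})}$ and hence span a dense subspace of $l^{p}(E(\FF_{n}))$ (hence of $H^{1}_{l^{p}}(\FF_{n})$ after passing to the quotient), this will prove the lemma.

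The starting point is to compute the discrete coboundary of a point mass. For $g\in \FF_{n}$, using $(\delta f)(x,s)=f(s)-f(x)$ and enumerating the edges of the Cayley graph incident to $g$, one finds
\[\delta(\chi_{\{g\}})=\sum_{j=1}^{n}\E_{(ga_{j}^{-1},g)}-\sum_{j=1}^{n}\E_{(g,ga_{j})}\in \delta(l^{p}(\FF_{n})).\]
Writing $v_{h}:=[\E_{(h,ha_{n})}]\in H^{1}_{l^{p}}(\FF_{n})$, we note that for each $1\le j\le n-1$ the classes $[\E_{(g,ga_{j})}]$ and $[\E_{(ga_{j}^{-1},g)}]$ are $\FF_{n}$-translates of $[\E_{(e,a_{j})}]$ and hence lie in $V$. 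Isolating the two $j=n$ contributions yields the key relation
\[v_{g}-v_{ga_{n}^{-1}}\in V\qquad\text{for every }g\in\FF_{n}.\]

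The remainder is a telescoping/averaging argument. Iterating the identity gives $v_{g}-v_{ga_{n}^{-k}}\in V$ for every $k\ge 1$, so after averaging
\[v_{g}\;\equiv\;\frac{1}{k}\sum_{l=0}^{k-1}v_{ga_{n}^{-l}}\pmod{V}.\]
The right-hand side lifts to the path $\frac{1}{k}\sum_{l=0}^{k-1}\E_{(ga_{n}^{-l},ga_{n}^{-l+1})}\in l^{p}(E(\FF_{n}))$, which is a sum of $k$ distinct edge indicators and hence has $l^{p}$-norm exactly $k^{1/p}$. Consequently the distance from $v_{g}$ to $V$, measured in the quotient norm on $H^{1}_{l^{p}}(\FF_{n})$, is bounded by $k^{1/p-1}$, which tends to $0$ as $k\to\infty$ whenever $p>1$. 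This forces $v_{g}\in \overline{V}=V$ and completes the proof in that range.

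The main obstacle is precisely the endpoint $p=1$, where the bound $k^{1/p-1}=1$ no longer decays. This reflects a genuine analytic fact: identifying $H^{1}_{l^{p}}(\FF_{n})/V$ with $l^{p}(\FF_{n})/\overline{(R_{a_{n}}-I)l^{p}(\FF_{n})}$ (where $R_{a_{n}}$ is right translation by $a_{n}$), one checks that $(R_{a_{n}}-I)$ has dense range for $1<p<\infty$ by a duality argument, whereas for $p=1$ its annihilator in $l^{\infty}(\FF_{n})$ contains every function constant on right $\langle a_{n}\rangle$-cosets. For the $p=1$ case, one would therefore need to bring in an additional idea --- for instance a F\o{}lner-type averaging along the amenable right $\langle a_{n}\rangle$-orbits, so that the lifted paths carry weights summing to $1$ while each orbit's total mass is appropriately small --- and this is where the technical heart of any complete proof must lie.
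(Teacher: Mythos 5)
Your argument for $1<p<\infty$ is correct and is essentially the argument the paper gives. The paper also reduces to showing $\E_{(e,a_{n})}$ lies in the closure of $W=\delta(l^{p}(\FF_{n}))+\Span\{\E_{(s,sa_{j})}:s\in\FF_{n},\,j\leq n-1\}$, and uses exactly your point-mass coboundary identity to get $\E_{(e,a_{n})}\equiv\E_{(a_{n}^{k},a_{n}^{k+1})}\bmod W$; it then finishes by noting that $\E_{(a_{n}^{k},a_{n}^{k+1})}\to 0$ weakly and that the weak and norm closures of the convex set $W$ coincide. Your Cesàro average $\frac{1}{k}\sum_{l}\E_{(ga_{n}^{-l},ga_{n}^{-l+1})}$ with norm $k^{1/p-1}$ is precisely the explicit convex combination that Mazur's theorem produces, so the two proofs are the same argument in different clothing.

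The $p=1$ issue you raise is a genuine gap, and it is a gap in the paper's proof as well: in $l^{1}$ the sequence $\E_{(a_{n}^{k},a_{n}^{k+1})}$ does \emph{not} converge weakly to $0$ (pair it with the bounded antisymmetric function equal to $1$ on every edge $(x,xa_{n})$, or invoke the Schur property), so the paper's final step fails at exactly the point where your exponent $k^{1/p-1}$ stops decaying. Moreover, you should push your own duality observation one step further: that same functional $\phi\in l^{\infty}(E(\FF_{n}))$ --- equal to $1$ on each $(x,xa_{n})$ and $0$ on the other generators' edges --- annihilates $\delta(l^{1}(\FF_{n}))$ (reindex the absolutely convergent sum $\sum_{x}(g(xa_{n})-g(x))=0$) and every translate of $\E_{(e,a_{j})}$ for $j\leq n-1$, yet pairs to $1$ with $\E_{(e,a_{n})}$. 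By Hahn--Banach this shows $\E_{(e,a_{n})}\notin\overline{W}$, i.e.\ the lemma as stated is actually \emph{false} at $p=1$; no additional idea will repair it there. So rather than being the ``technical heart'' of a complete proof, the endpoint $p=1$ should be excluded from the statement (which also affects the upper bound in the Betti number theorem where the lemma is invoked for $p=1$).
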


\begin{proof}  It suffices to show that
\[W=\delta(l^{p}(\FF_{n}))+\Span\{\E_{(s,sa_{j})}:s\in \FF_{n},1\leq j\leq n-1\}\]
is norm dense in $l^{p}(E(\FF_{n})).$ It is enough to show that
\[\E_{(e,a_{n})}\in \overline{W}^{\|\cdot\|}.\]
By convexity it is enough to show that $\E_{(e,a_{n})}$ is in the weak closure of $W.$ We shall prove by induction on $k$ that
\[\E_{(e,a_{n})}\equiv \E_{(a_{n}^{k},a_{n}^{k+1})}\mod W.\]
This is enough since
\[\E_{(a_{n}^{k},a_{n}^{k+1})}\to 0\]
weakly.

	The base case $k=0$ is trivial, so assume the result true for some $k.$ Then
\begin{align*}
\E_{(a_{n}^{k},a_{n}^{k+1})}-\delta(\chi_{\{a_{n}^{k+1}\}})&=\sum_{j=1}^{n}\E_{(a_{n}^{k+1},a_{n}^{k+1}a_{j})}+\sum_{j=1}^{n-1}\E_{(a_{n}^{k+1},a_{n}^{k+1}a_{j}^{-1})}\\
&=\E_{(a_{n}^{k+1},a_{n}^{k+2})}+\sum_{j=1}^{n-1}a_{n}^{k+1}\E_{(e,a_{j})}-\sum_{j=1}^{n-1}a_{n}^{k+1}a_{j}^{-1}\E_{(e,a_{j})}\\
&\equiv\E_{(a_{n}^{k+1},a_{n}^{k+2})} \mod W.
\end{align*}
Here is a graphical explanation of the above calculation. If we think of the elements of $l^{p}(E(\FF_{n}))$ as formal sums of oriented edges, then $-\delta(\chi_{a_{n}^{k+1}})$ is a ``source" at $a_{n}^{k+1}.$ It is the sum of all edges adjacent to $a_{n}^{k+1},$ directed away from $a_{n}^{k+1}.$ Below is a graphical representation of $-\delta(\chi_{a_{n}^{k+1}}):$
\[\xymatrix{
&                           &  &a_{n}a_{n-1}^{-1} &a_{n}^{k+2} &a_{n}a_{n-1}\\
&-\delta(\chi_{a_{n}^{k+1}})&= & \vdots  &\ar[ul],\ar[dl]a_{n}^{k+1}\ar[u],\ar[d],\ar[ur],\ar[dr]&\vdots \\
&                           &  &a_{n}a_{1}^{-1}&a_{n}^{k} &a_{n}a_{1}}\]
The above computation can be phrased as follows:
\[-\delta(\chi_{a_{n}^{k+1}})+\mathcal{E}_{(a_{n}^{k},a_{n}^{k+1})}=\]
\[\xymatrix{
&a_{n}^{k+1}a_{n-1}    &a_{n}^{k+2}                                             & a_{n}^{k+1}a_{1}       &   &     \\
&\vdots                &\ar[ul],\ar[dl],\ar[d]a_{n}^{k+1} \ar[u],\ar[ur],\ar[dr] & \vdots               & + &a_{n}^{k+1}\\
&a_{n}^{k+1}a_{1}^{-1} &a_{n}^{k}                                            &a_{n}^{k+1}a_{n-1}^{-1}   &   &\ar[u]a_{n}^{k} \\
&                       &                                                         &=                    &   &}\]
\[\xymatrix{
&a_{n}^{k+2} & & a_{n}a_{n-1}^{-1}&                                      & a_{n}a_{n-1}\\
&a_{n}^{l+1}\ar[u] & +& \vdots          &\ar[ul],\ar[dl]a_{n}\ar[ur],\ar[dr] & \vdots\\
&            &  & a_{n}a_{1}^{-1}  &                                     & a_{n}a_{1}}\]
and the second term on the right-hand side is easily seen to be in the span of translates of $\mathcal{E}_{(e,a_{j})},j=1,\dots, n-1.$ This completes the induction step.

\end{proof}

We shall prove the analogous claim for $l^{p}$-homology of free groups, but we need a few preliminary results. These next few results must be well known, but we include proofs for completeness. For a countable discrete group $\Gamma,$ we let $\rho\colon \Gamma\to B(l^{p}(\Gamma)),1\leq p\leq \infty$ be given by
\[(\rho(g)f)(x)=f(xg),\mbox{ for $f\in l^{p}(\Gamma).$}\]

\begin{lemma}\label{L:strictcontract} Let $\Gamma$ be a non-amenable group with finite-generating set $S.$ Let $A\colon l^{p}(\Gamma)\to l^{p}(\Gamma)$ be defined by
\[A=\frac{1}{|S\cup S^{-1}|}\sum_{s\in S\cup S^{-1}}\rho(s).\]
For $1<p<\infty,$ there is a constant $C_{p}<1$ such that $\|Af\|_{p}\leq C_{p}\|f\|_{p}$ for all $f\in l^{p}(\Gamma).$

\end{lemma}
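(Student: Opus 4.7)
The plan is to reduce to the Hilbert space case $p=2$, where the result is a classical theorem of Kesten, and then obtain the general $l^{p}$ estimate by Riesz--Thorin interpolation against the trivial bounds on $l^{1}$ and $l^{\infty}$.

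First, since $S\cup S^{-1}$ is symmetric, $A$ is a convex combination of the isometries $\rho(s)$, each of which has norm one on $l^{q}(\Gamma)$ for every $1\le q\le\infty$. Consequently $\|A\|_{l^{q}\to l^{q}}\le 1$ for every $q$. Moreover, $\rho(s)^{*}=\rho(s^{-1})$ on $l^{2}(\Gamma)$, and symmetry of $S\cup S^{-1}$ gives $A^{*}=A$ on $l^{2}(\Gamma)$, so $\|A\|_{l^{2}\to l^{2}}$ equals the spectral radius of $A$, which coincides with the spectral radius of the simple random walk on the Cayley graph of $\Gamma$ with respect to $S\cup S^{-1}$. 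By Kesten's theorem, non-amenability of $\Gamma$ is equivalent to this spectral radius being strictly less than $1$; denote it $C_{2}<1$.

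Second, interpolate. For $2\le p<\infty$, apply Riesz--Thorin between $q_{0}=2$ (norm $\le C_{2}$) and $q_{1}=\infty$ (norm $\le 1$) with $\frac{1}{p}=\frac{1-\theta}{2}$, i.e.\ $\theta=1-\tfrac{2}{p}\in[0,1)$, to obtain
\[\|A\|_{l^{p}\to l^{p}}\le C_{2}^{\,1-\theta}\cdot 1^{\theta}=C_{2}^{2/p}.\]
For $1<p\le 2$, interpolate instead between $q_{0}=1$ and $q_{1}=2$ with $\frac{1}{p}=1-\frac{\theta}{2}$, i.e.\ $\theta=2-\tfrac{2}{p}\in(0,1]$, giving
\[\|A\|_{l^{p}\to l^{p}}\le 1^{1-\theta}\cdot C_{2}^{\,\theta}=C_{2}^{\,2(p-1)/p}.\]
In both cases, the exponent of $C_{2}$ is strictly positive whenever $1<p<\infty$, so $C_{p}:=C_{2}^{\min(2/p,\,2(p-1)/p)}<1$ works.

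The main (essentially only) nontrivial input is Kesten's theorem identifying $\|A\|_{l^{2}\to l^{2}}<1$ with non-amenability of $\Gamma$; I would simply cite it rather than reprove it. Everything else is a direct application of convexity of isometries, symmetry of $S\cup S^{-1}$, and Riesz--Thorin, none of which requires any structural hypothesis on $\Gamma$ beyond what is already in the statement. The only care needed is that $p=1$ and $p=\infty$ are genuinely excluded: at these endpoints the interpolation exponent of $C_{2}$ collapses to $0$, in line with the well-known failure of the conclusion for $p\in\{1,\infty\}$.
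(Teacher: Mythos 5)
Your proof is correct and follows essentially the same route as the paper: the $p=2$ case is Kesten's characterization of non-amenability, and the general case follows by interpolating against the trivial $l^{1}$ and $l^{\infty}$ bounds. The paper's proof is just a terser version of exactly this argument, so your added detail (the explicit Riesz--Thorin exponents and the remark about the endpoints) is a faithful expansion rather than a different method.
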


\begin{proof} We use
\[\|A\|_{l^{p}\to l^{p}}\]
for the norm of $A$ as an operator from $l^{p}(\Gamma)\to l^{p}(\Gamma).$ We know $\|A\|_{l^{2}\to l^{2}}<1$ from the non-amenability of $\Gamma$ (see \cite{BO} Theorem 2.6.8 (8)). Since $\|A\|_{l^{\infty}\to l^{\infty}}\leq 1$ and $\|A\|_{l^{1}\to l^{1}}\leq 1,$ it follows by interpolation that for all $1<p<\infty,$ there exists a $C_{p}<1$ with $\|A\|_{l^{p}\to l^{p}}\leq C_{p}.$ From this the lemma follows. \end{proof}

\begin{lemma} Let $n\in \NN$ with $n\geq 2.$ For $1<p<\infty,$ the operator $\del \circ \delta\colon l^{p}(\FF_{n})\to l^{p}(\FF_{n})$ is invertible.\end{lemma}
\begin{proof} Let $a_{1},\dots,a_{n}$ be free generators for $\FF_{n},$ and let $S=\{a_{1},\dots,a_{n}\}.$  We have that
\[\del(\delta f)(x)=\sum_{s\in S\cup S^{-1}}f(x)-f(xs)=|S\cup S^{-1}|\left(f(x)-\frac{1}{|S\cup S^{-1}|}\sum_{s\in S\cup S^{-1}}\rho(s)f(x)\right).\]
So
\begin{equation}\label{E:spellingthingsoutformorons}
\del\circ \delta=\left(\id-\frac{1}{|S\cup S^{-1}|}\sum_{s\in S\cup S^{-1}}\rho(s)\right)|S\cup S^{-1}|.
\end{equation}
By the previous lemma
\[\left\|\frac{1}{|S\cup S^{-1}|}\sum_{s\in S\cup S^{-1}}\rho(s)\right\|_{l^{p}\to l^{p}}<1,\]
for $1<p<\infty.$ By (\ref{E:spellingthingsoutformorons}) it follows that $\del(\delta)$ is invertible for $1<p<\infty.$
\end{proof}

For the next corollary we use the following notation: if $V,W,U$ are Banach spaces with $W,U\subseteq V,$ we use $V=W\oplus U$ to mean $W\cap U=\{0\},W+U=V.$

\begin{cor} Let $n\in \NN$ with $n\geq 2.$  For $1<p<\infty,$ we have the following Hodge Decomposition:
\[l^{p}(E(\FF_{n}))=\ker(\del \colon l^{p}(E(\FF_{n}))\to l^{p}(\Gamma))\oplus \delta(l^{p}(\FF_{n})).\]
 \end{cor}
\begin{proof} By  Lemma \ref{L:RIHTDT}, we know that $\delta(l^{p}(\FF_{n}))$ is closed in $l^{p}(E(\FF_{n})).$ It is clear that $\ker(\del \colon l^{p}(E(\FF_{n}))\to l^{p}(\Gamma))$ is closed in $l^{p}(E(\FF_{n})).$  Given $f\in \ker(\del \colon l^{p}(E(\FF_{n}))\to l^{p}(\FF_{n}))\cap \delta(l^{p}(\FF_{n}))$ write $f=\delta(g).$ Then
\[0=\del(f)=\del(\delta(g)).\]
By the preceding lemma we have that $g=0.$

	If $f\in l^{p}(E(\Gamma)),$ then by the preceding lemma we can find a unique $g$ with $\del(f)=\del(\delta(g)).$  Then $f-\delta(g)\in \ker(\del),$ and
\[f=f-\delta(g)+\delta(g).\]

\end{proof}

For the next proposition, recall that if $a_{1},\dots,a_{n}$ are free generators for $\FF_{n},$ then every $x\in \FF_{n}$ can be written as
\[x=a_{i_{1}}^{r_{1}}a_{i_{2}}^{r_{2}}\cdots a_{i_{k}}^{r_{k}},\]
where $i_{l}\ne i_{l+1}$ for $1\leq l\leq k,$ and $r_{k}\in \ZZ\setminus\{0\}.$ This will be call the \emph{reduced} expression of $x.$ We will say $x$ \emph{starts} with $a_{i_{1}}$ if $r_{1}>0,$ and that $x$ \emph{starts} with $a_{i_{1}}^{-1}$ if $r_{1}<0.$  We will call
\[\sum_{l=1}^{k}r_{l}\]
the \emph{word length} of $x.$ We use $|x|$ for the word length of $x.$

\begin{proposition} Let $n\in \NN,$  and $1<p<\infty.$ Then $H_{1}^{l^{p}}(\FF_{n})$ can be generated by $n-1$ elements.
\end{proposition}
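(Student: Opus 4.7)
The plan is to identify $H_1^{l^p}(\FF_n)$ with $H^1_{l^p}(\FF_n)$ as Banach $\FF_n$-modules, and then invoke Lemma \ref{L:generateco}. The key geometric input is that the Cayley graph of $\FF_n$ with respect to $a_1,\dots,a_n$ is a tree, so it has no $2$-simplices: $\Delta_2(X) = \emptyset$, which forces $l^p(\Delta_2(X))=0$. Consequently
\[H_1^{l^p}(\FF_n) = \ker(\del \colon l^p(E(\FF_n)) \to l^p(\FF_n)).\]

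Next, I would feed this into the Hodge decomposition from the preceding corollary,
\[l^p(E(\FF_n)) = \ker(\del) \oplus \delta(l^p(\FF_n)),\]
which holds for $1<p<\infty$. Combined with the fact (noted just before Lemma \ref{L:generateco}) that $\delta(l^p(\FF_n))$ is closed because $\FF_n$ is non-amenable, the quotient map $\pi\colon l^p(E(\FF_n))\to l^p(E(\FF_n))/\delta(l^p(\FF_n)) = H^1_{l^p}(\FF_n)$ restricts to an $\FF_n$-equivariant bounded linear isomorphism
\[\pi|_{\ker(\del)}\colon H_1^{l^p}(\FF_n) = \ker(\del) \longrightarrow H^1_{l^p}(\FF_n).\]

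Finally, I would appeal to Lemma \ref{L:generateco}, which says the classes of $\E_{(e,a_1)},\dots,\E_{(e,a_{n-1})}$ dynamically generate $H^1_{l^p}(\FF_n)$. Pulling these generators back through $\pi|_{\ker(\del)}^{-1}$ produces $n-1$ elements of $\ker(\del)$ which dynamically generate $H_1^{l^p}(\FF_n)$. Concretely, by the preceding lemma $\del\delta\colon l^p(\FF_n)\to l^p(\FF_n)$ is invertible, so for each $j$ I can set $g_j = (\del\delta)^{-1}\del\E_{(e,a_j)}$ and $f_j := \E_{(e,a_j)} - \delta g_j \in \ker(\del)$; then $\pi(f_j)$ equals the class of $\E_{(e,a_j)}$ in $H^1_{l^p}(\FF_n)$, and these $f_1,\dots,f_{n-1}$ are the desired generators.

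There is no real obstacle here: every ingredient (the tree structure, the Hodge decomposition, the closedness of $\delta(l^p(\FF_n))$, and the cohomological generation statement) has already been established in the excerpt, so the proof amounts to assembling them. The only minor point to verify is that the isomorphism $\pi|_{\ker(\del)}$ is indeed $\FF_n$-equivariant and bounded, which is immediate since $\pi$ and the inclusion $\ker(\del)\hookrightarrow l^p(E(\FF_n))$ are both.
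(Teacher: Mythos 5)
Your proof is correct, but it takes a genuinely different route from the paper's. You identify $H_1^{l^p}(\FF_n)$ with $\ker(\del)$ (valid, since the tree has no $2$-simplices), observe that the Hodge decomposition $l^p(E(\FF_n))=\ker(\del)\oplus\delta(l^p(\FF_n))$ is a topological direct sum of closed $\FF_n$-invariant subspaces (so the quotient map onto $H^1_{l^p}(\FF_n)$ restricts to a bounded equivariant isomorphism on $\ker(\del)$ by the open mapping theorem), and then transport the $n-1$ dynamical generators of $H^1_{l^p}(\FF_n)$ supplied by Lemma \ref{L:generateco} back to $\ker(\del)$; your explicit formula $f_j=\E_{(e,a_j)}-\delta(\del\delta)^{-1}\del\,\E_{(e,a_j)}$ is exactly the Hodge projection of $\E_{(e,a_j)}$ onto $\ker(\del)$. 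The paper instead works entirely on the homology side: it reduces to $n=2$ via the embeddings $\phi_j\colon\FF_2\to\FF_n$, and for $n=2$ it hand-builds a single generator $f$ of $\ker(\del)$ by an inductive ``spreading'' construction on the tree (values $1,1/3,1/9,\dots$ along edges), verifying by an explicit sequence of corrections by coboundaries that $\Span(\FF_2 f)+\delta(l^p(\FF_2))$ is dense. Your argument is shorter, works uniformly in $n$, and reuses work already done for cohomology; the paper's argument produces a concrete generator, which has independent interest but requires considerably more bookkeeping. Two minor points you should still record: the Hodge decomposition corollary is stated only for non-amenable groups, so the case $n=1$ (where $\FF_1=\ZZ$ is amenable and the claim is that $H_1^{l^p}(\ZZ)=\{0\}$, generated by zero elements) must be disposed of separately, as the paper does; and one should note explicitly that a bounded equivariant isomorphism carries dynamically generating sets to dynamically generating sets, which is immediate from continuity and equivariance.
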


\begin{proof} The claim for $n=1$ is clear since $H^{1}_{l^{p}}(\ZZ)=0.$ First, we show how to reduce to the case $n=2.$ Let $n>2,$ and let $a_{1},\dots,a_{n}$ be the generators of $\FF_{n}.$ Consider the injective homomorphisms $\phi_{j}\colon \FF_{2}\to \FF_{n}$ for $1\leq j\leq n-1$ given by $\phi_{j}(a_{i})=a_{i+j}.$ Let $f$ be an element in $l^{p}(E(\FF_{2}))$ such that $\Span(\FF_{2}f)$ is dense in $\ker(\del)\cap l^{p}(E(\FF_{2})).$ Let $f_{j}\in l^{p}(E(\FF_{n}))$ be the element defined by
\[f_{j}(x,y)=\begin{cases}
0,\textnormal{ if one of $x,y\notin \phi_{j}(\FF_{2})$}\\
f(\phi_{j}^{-1}(x),\phi_{j}^{-1}(y)),\textnormal{ otherwise.}\\
\end{cases}\]
Then $f_{j}\in \ker(\del).$ It is easy to see from the preceding corollary and the fact that $f$ generates $\ker(\del)\cap l^{p}(E(\FF_{2}))$ that
\[\mathcal{E}_{(e,a_{j})}\in \overline{\ker(\del)+\delta(l^{p}(\FF_{n}))}^{\|\cdot\|_{p}}.\]
Again by the preceding corollary we find that $f_{1},\dots,f_{n-1}$ generate $\ker(\del).$ Thus it suffices to handle the case $n=2.$

	We now concentrate on the case $n=2,$ and we use $a,b$ for the generators of $\FF_{2}.$ We define $f\colon E(\FF_{2})\to \RR$ as follows:
\[f(x,y)=\begin{cases}
\left(\frac{1}{3}\right)^{|x|},\textnormal{ $(x,y)\in\mathcal{E}(\FF_{n}),|y|=|x|+1$ $y$ starts with $a$ or $b$}\\
-\left(\frac{1}{3}\right)^{|y|},\textnormal{ $(x,y)\in \mathcal{E}(\FF_{n}),|x|=|y|+1$ $x$ starts with $a$ or $b$}\\
\left(\frac{1}{3}\right)^{|y|},\textnormal{ $(x,y)\in \mathcal{E}(\FF_{n}),|x|=|y|+1,$ $x$ starts with $a^{-1}$ or $b^{-1}$}\\
-\left(\frac{1}{3}\right)^{|x|}\textnormal{ $(x,y)\in \mathcal{E}(\FF_{n}), |y|=|x|+1,$ $y$ starts with $a^{-1}$ or $b^{-1}$}
\end{cases}.\]
The function $f$ is pictured below:
\[\xymatrix{
              &  \vdots  &ab^{-1}                       &             a^{2}                             &ab                         & \cdots  &  & &\\
               &b^{-2}a\ar[d]^{1/9}   &b^{-1}a \ar[d]^{1/3}          & \ar[ul]^{1/3} a \ar[u]^{1/3} \ar[ur]^{1/3}    &ba                         &b^{2}a   & & &\\
b^{-3}\ar[r]^{1/9}&b^{-2}\ar[r]^{1/3}&b^{-1}\ar[r]^{1}              &\ar[u]^{1}e\ar[r]^{1}                           &b \ar[d]^{1/3}\ar[u]^{1/3}\ar[r]^{1/3}& b^{2}\ar[u]^{1/9}\ar[r]^{1/9}\ar[d]^{1/9}&b^{3}& \cdots\\
& b^{-2}a^{-1}\ar[u]^{1/9} &b^{-1}a^{-1}\ar[u]^{1/3}     &\ar[u]^{1}a^{-1}                                &ba^{-1}        &b^{2}a^{-1}& & &\\
 &       \cdots            & a^{-1}b^{-1}\ar[ur]^{1/3}  & \ar[u]^{1/3} a^{-2}                            &\ar[ul]^{1/3} a^{-1}b      &&&& }\]
It is not hard to show that $\partial(f)=0.$ Since the number of words in the free group of length $n$ is $4\cdot 3^{n-1},$ we also see that $f\in l^{p}(\mathcal{E}(\FF_{n})).$  Set $V=\overline{\Span(\FF_{2}f)+\delta(l^{p}(\FF_{2}))}^{\wk}=\overline{\Span(\FF_{2}f)+\delta(l^{p}(\FF_{2}))}^{\|\cdot\|}.$
To show that $f$ generates $\ker(\del)$ it suffices by the preceding corollary to show that
\[\mathcal{E}_{(e,a_{1})},\mathcal{E}_{(e,a_{2})}\in V.\]
	
	Let $B_{n}=\{(x,y)\in G:|x|,|y|\leq n\}.$ For $n\geq 0,$ let $g_{n}\colon E(\FF_{n})\to \CC,$ be the function defined by
\begin{align*}
\chi_{B_{n}}g_{n}&=\left(\sum_{k=0}^{n-1}(1/3)^{n}\right)\left(\mathcal{E}_{(e,a)}+\mathcal{E}_{(e,b)}+\mathcal{E}_{(a^{-1},e)}+\mathcal{E}_{(b^{-1},e)}\right),\\
(1-\chi_{B_{n}})g_{n}&=(1-\chi_{B_{n}})f,
\end{align*}
we first show that $g_{n}\in \Span(\FF_{2}f)+\delta(l^{p}(\FF_{2}))$ for all $n\in \NN.$

	We prove this by induction on $n,$ the case $n=1$ being clear since $g_{1}=f.$ Suppose the claim true for some $n.$ Then, it is not hard to show that
\[g_{n+1}=g_{n}+\frac{1}{3^{n}}\sum_{\substack{ w\in \FF_{n}\setminus\{e\},\\ |w|\leq n, \\ w \textnormal{ starts with $a$ or $b$}}}\delta(\chi_{\{w\}})-\frac{1}{3^{n}}\sum_{\substack{ w\in \FF_{n}\setminus\{e\},\\ |w|\leq n, \\ w \textnormal{ starts with $a^{-1}$ or $b^{-1}$}}}\delta(\chi_{\{w\}}).\]
So inductively, we see that $g_{n}\in \Span(\FF_{2}f)+\delta(l^{p}(\FF_{2})),$  for all $n\in\NN.$ The first two steps of this process are pictured below:
\[\xymatrix{
              &  \vdots  &ab^{-1}                       &             a^{2}                             &ab                         & \cdots  &  & &\\
               &b^{-2}a\ar[d]^{1/9}   &b^{-1}a \ar[d]^{1/3}          & \ar[ul]^{1/3} a \ar[u]^{1/3} \ar[ur]^{1/3}    &ba                         &b^{2}a   & & &\\
b^{-3}\ar[r]^{1/9}&b^{-2}\ar[r]^{1/3}&b^{-1}\ar[r]^{1}              &\ar[u]^{1}e\ar[r]^{1}                           &b \ar[u]^{1/3}\ar[r]^{1/3}\ar[d]^{1/3}& b^{2}\ar[u]^{1/9}\ar[r]^{1/9}\ar[d]^{1/9}&b^{3}& \cdots\\
& b^{-2}a^{-1}\ar[u]^{1/9} &b^{-1}a^{-1}\ar[u]^{1/3}     &\ar[u]^{1}a^{-1}                                &ba^{-1}       &b^{2}a^{-1}& & &\\
 &       \cdots            & a^{-1}b^{-1}\ar[ur]^{1/3}  & \ar[u]^{1/3} a^{-2}                            &\ar[ul]^{1/3} a^{-1}b      &&&& }\]
\[\xymatrix{&\ar @{=>}[r]^{\frac{1}{3}(\delta(\chi_{\{a\}})+\delta(\chi_{\{b\}})-\delta(\chi_{\{b^{-1}\}})-\delta(\chi_{\{a^{-1}\}}))} &}\]
\[\xymatrix{
              &  \vdots  &ab^{-1}                       &             a^{2}                             &ab                         & \cdots  &  & &\\
               &b^{-2}a\ar[d]^{1/9}   &b^{-1}a \ar[d]^{0}          & \ar[ul]^{0} a \ar[u]^{0} \ar[ur]^{0}    &ba                         &b^{2}a   & & &\\
b^{-3}\ar[r]^{1/9}&b^{-2}\ar[r]^{0}&b^{-1}\ar[r]^{4/3}              &\ar[u]^{4/3}e\ar[r]^{4/3}                           &b \ar[u]^{0}\ar[r]^{0}\ar[d]^{0}& b^{2}\ar[u]^{1/9}\ar[r]^{1/9}\ar[d]^{1/9}&b^{3}& \cdots\\
& b^{-2}a^{-1}\ar[u]^{1/9} &b^{-1}a^{-1}\ar[u]^{0}     &\ar[u]^{4/3}a^{-1}                                &ba^{-1}        &b^{2}a^{-1}& & &\\
 &       \cdots            & a^{-1}b^{-1}\ar[ur]^{0}  & \ar[u]^{0} a^{-2}                            &\ar[ul]^{0} a^{-1}b      &&&& }\]
\[\xymatrix{&\ar @{=>}[r]^{\frac{1}{9}(\delta(\chi_{\{b^{2}\}}-\delta(\chi_{\{b^{-2}\}})+\delta(\chi_{\{a^{2}\}})+\cdots) }&}\]
\[\xymatrix{
              &  \vdots  &ab^{-1}                       &             a^{2}                             &ab                         & \cdots  &  & &\\
               &b^{-2}a\ar[d]^{0}   &b^{-1}a \ar[d]^{1/9}          & \ar[ul]^{1/9} a \ar[u]^{1/9} \ar[ur]^{1/9}    &ba                         &b^{2}a   & & &\\
b^{-3}\ar[r]^{0}&b^{-2}\ar[r]^{1/9}&b^{-1}\ar[r]^{4/3}              &\ar[u]^{4/3}e\ar[r]^{4/3}                           &b \ar[d]^{1/9}\ar[u]^{1/9}\ar[r]^{1/9}& b^{2}\ar[u]^{0}\ar[r]^{0}\ar[d]^{0}&b^{3}& \cdots\\
& b^{-2}a^{-1}\ar[u]^{0} &b^{-1}a^{-1}\ar[u]^{1/9}     &\ar[u]^{4/3}a^{-1}                                &ba^{-1}        &b^{2}a^{-1}& & &\\
 &       \cdots            & a^{-1}b^{-1}\ar[ur]^{1/9}  & \ar[u]^{1/9} a^{-2}                            &\ar[ul]^{1/9} a^{-1}b      &&&& }\]
\[\xymatrix{&\ar @{=>}[r]^{\frac{1}{9}(\delta(\chi_{\{a\}})+\delta(\chi_{\{b\}})-\delta(\chi_{\{b^{-1}\}})-\delta(\chi_{\{a^{-1}\}}))} &}\]
\[\xymatrix{
              &  \vdots  &ab^{-1}                       &             a^{2}                             &ab                         & \cdots  &  & &\\
               &b^{-2}a\ar[d]^{0}   &b^{-1}a \ar[d]^{0}          & \ar[ul]^{0} a \ar[u]^{0} \ar[ur]^{0}    &ba                         &b^{2}a   & & &\\
b^{-3}\ar[r]^{0}&b^{-2}\ar[r]^{0}&b^{-1}\ar[r]^{13/9}              &\ar[u]^{13/9}e\ar[r]^{13/9}                           &b \ar[d]^{0}\ar[u]^{0}\ar[r]^{0}& b^{2}\ar[u]^{0}\ar[r]^{0}\ar[d]^{0}&b^{3}& \cdots\\
& b^{-2}a^{-1}\ar[u]^{0} &b^{-1}a^{-1}\ar[u]^{0}     &\ar[u]^{13/9}a^{-1}                                &ba^{-1}       &b^{2}a^{-1}& & &\\
 &       \cdots            & a^{-1}b^{-1}\ar[ur]^{0}  & \ar[u]^{0} a^{-2}                            &\ar[ul]^{0} a^{-1}b      &&&& }\]
Since $\sup_{n}\|g_{n}\|_{p}<\infty$ we find that $g_{n}$ converges weakly to
\[\frac{3}{2}(\mathcal{E}_{(e,a)}+\mathcal{E}_{(e,b)}+\mathcal{E}_{(b^{-1},e)}+\mathcal{E}_{(a^{-1},e)}).\]
Rescaling we find that
\[\mathcal{E}_{(e,a)}+\mathcal{E}_{(e,b)}+\mathcal{E}_{(b^{-1},e)}+\mathcal{E}_{(a^{-1},e)}\in V.\]
By adding $\pm \delta(\chi_{\{e\}})$ and scaling we find that
\begin{align}\label{E:wearealmostdone}
\mathcal{E}_{(e,a)}+\mathcal{E}_{(e,b)}\in V,\\ \nonumber
\mathcal{E}_{(e,b^{-1})}+\mathcal{E}_{(e,a^{-1})}\in V.\nonumber
\end{align}
Since
\begin{align*}
\mathcal{E}_{(e,a)}+\mathcal{E}_{((ba^{-1})^{n},(ba^{-1})^{n}b)}=\mathcal{E}_{(e,a)}+\mathcal{E}_{((ba^{-1})^{n-1},(ba^{-1})^{n-1}b)}&+(ba^{-1})^{n-1}b\left(\mathcal{E}_{(e,b^{-1})}+\mathcal{E}_{(e,a^{-1})}\right)\\
&+(ba^{-1})^{n}\left(\mathcal{E}_{(e,a)}+\mathcal{E}_{(e,b)}\right),
\end{align*}
by $\FF_{2}$-invariance of $V$ we see inductively from (\ref{E:wearealmostdone}) that
\[\mathcal{E}_{(e,a)}+\mathcal{E}_{((ba^{-1})^{n-1},(ba^{-1})^{n-1}b)}\in V\]
for all $n\in\NN.$ If we let $n\to\infty$ we see that
\[\mathcal{E}_{(e,a)}\in \overline{V}^{\textnormal{weak}}=V.\]
Subtracting $\mathcal{E}_{(e,a)}$ from $\mathcal{E}_{(e,a)}+\mathcal{E}_{(e,b)}$ and using (\ref{E:wearealmostdone}) we see that
\[\mathcal{E}_{(e,a)},\mathcal{E}_{(e,b)}\in V.\]
By $\FF_{2}$-invariance of $V$ we see that $V=l^{p}(E(\FF_{2})).$ This completes the proof.

\end{proof}

\begin{theorem} Fix $n\in \NN,$ and a sofic approximation $\Sigma.$

(a) The dimension of the  $l^{p}$-cohomology groups of $\FF_{n}$ satisfy
\[\dim_{\Sigma,l^{p}}(H^{1}_{l^{p}}(\FF_{n}),\FF_{n})=\underline{\dim}_{\Sigma,l^{p}}(H^{1}_{l^{p}}(\FF_{n}),\FF_{n})=n-1,\mbox{ for $1\leq p\leq 2$},\]
\[H^{m}_{l^{p}}(\FF_{n})=\{0\}\mbox{ for $m\geq 2$}.\]

(b) The dimension of the  $l^{p}$-homology groups of $\FF_{n}$ satisfy:
\[\dim_{\Sigma,l^{p}}(H^{l^{p}}_{n}(\FF_{n}),\FF_{n})=\underline{\dim}_{\Sigma,l^{p}}(H^{l^{p}}_{n}(\FF_{n}),\FF_{n})= n-1,\mbox{ for $1<p<2$,}\]
\[H^{l^{1}}_{1}(\FF_{n})=\ker(\del)\cap l^{1}(E(\FF_{n}))=\{0\},\]
\[H^{l^{p}}_{m}(\FF_{n})=0\mbox{ for $m\geq 2.$}\]
\end{theorem}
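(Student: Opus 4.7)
The vanishing statements come essentially for free from the topology of the Cayley graph $G$ of $\FF_n$: with respect to the free generators, $G$ is a tree, so there are no cells of dimension $\geq 2$, and the (co)chain spaces defining $H^m_{l^p}$ and $H^{l^p}_m$ are literally $\{0\}$ for $m \geq 2$. For the vanishing $H_1^{l^1}(\FF_n) = \ker(\del) \cap l^1(E(\FF_n)) = \{0\}$, I would argue directly on the tree: fix $f \in \ker(\del) \cap l^1$ and an oriented edge $e_0$; removing the underlying unoriented edge disconnects $G$ into two components, and let $T$ be the one that $e_0$ points into. Iterating the flow-conservation law $\del f \equiv 0$ at the vertices of $T$ yields
\[f(e_0) = \sum_{e \in E_k(T)} \pm f(e)\]
for every $k \geq 0$, where $E_k(T)$ is the set of edges in $T$ at combinatorial distance $k$ from $e_0$. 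Since $\|f\|_1 < \infty$, the tail sum goes to $0$ as $k \to \infty$, forcing $f(e_0) = 0$.

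The upper bound $\dim_{\Sigma,l^p}(H^1_{l^p}(\FF_n), \FF_n) \leq n-1$ is an immediate consequence of Lemma \ref{L:generateco}: the $n-1$ classes $[\E_{(e,a_j)}]$ dynamically generate $H^1_{l^p}$, so there is a bounded $\FF_n$-equivariant map with dense image $l^p(\FF_n, \CC^{n-1}) \to H^1_{l^p}(\FF_n)$, and Properties 1 and 4 give the bound. The analogous upper bound for $H_1^{l^p}$ with $1 < p < 2$ comes from the preceding proposition by the identical argument.

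For the matching lower bound $\underline{\dim} \geq n-1$ in the range $1 < p \leq 2$, I would use the Hodge decomposition corollary to identify
\[H^1_{l^p}(\FF_n) \cong \ker(\del) \cap l^p(E(\FF_n)) \cong H_1^{l^p}(\FF_n)\]
as a closed $\FF_n$-invariant subspace of $l^p(E(\FF_n)) \cong l^p(\FF_n)^n \subseteq l^p(\NN, l^p(\FF_n))$. Property 5 then gives
\[\underline{\dim}_{\Sigma,l^p}(H^1_{l^p}(\FF_n), \FF_n) \geq \dim_{L(\FF_n)}\!\left(\overline{\ker(\del) \cap l^p(E(\FF_n))}^{\|\cdot\|_2}\right).\]
By Lemma \ref{L:strictcontract} and the Hodge corollary, the Hodge projection $I - \delta(\del\delta)^{-1}\del$ is bounded on $l^p$ for $1 < p < \infty$, so projecting finite-support truncations of elements of $\ker(\del) \cap l^2(E)$ shows that $\ker(\del) \cap l^p(E)$ is $\|\cdot\|_2$-dense in $\ker(\del) \cap l^2(E(\FF_n))$. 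The latter has $L(\FF_n)$-dimension $n-1$ by the standard $l^2$-computation (from the orthogonal Hodge decomposition $l^2(E) = \ker(\del) \oplus \delta(l^2(\FF_n))$, $\dim_{L(\FF_n)} l^2(E) = n$, and $\dim_{L(\FF_n)} \delta(l^2(\FF_n)) = 1$). The case $p = 2$ of (a) alternatively follows at once from Property 7.

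The main obstacle is the case $p = 1$ of (a), since the Hodge decomposition breaks down on $l^1$ (the averaging operator has spectral radius $1$, so $I - A$ is not invertible there), and $H^1_{l^1}$ is not naturally a subspace of an $l^1$-space to which Property 5 directly applies. Property 2 applied to $0 \to \delta(l^1(\FF_n)) \to l^1(E(\FF_n)) \to H^1_{l^1}(\FF_n) \to 0$ together with Property 4 does give $\dim_{\Sigma,l^1}(H^1_{l^1}) \geq n-1$, but the $\underline{\dim}$ lower bound is more delicate. I would exploit the $\FF_n$-equivariant bounded map with dense image $H^1_{l^1}(\FF_n) \to H^1_{l^2}(\FF_n)$ induced by the dense continuous inclusion $l^1 \hookrightarrow l^2$: granting (as the proof of Property 1 in \cite{Me} in fact establishes) the $\underline{\dim}$-version of Property 1, we conclude $\underline{\dim}_{\Sigma,l^1}(H^1_{l^1}) \geq \underline{\dim}_{\Sigma,l^2}(H^1_{l^2}) = n-1$, reducing everything to the already-handled $p = 2$ case.
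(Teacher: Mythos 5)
Most of your argument is sound, and in two places you take a genuinely different route from the paper. For the vanishing of $\ker(\del)\cap l^{1}(E(\FF_{n}))$ you argue directly on the tree by iterating the flow-conservation identity and letting the $l^{1}$ tail kill the edge values; the paper instead observes that the geodesic-summation operator $A$ sends each $\E_{(x,xa_{j})}$ into $l^{\infty}(\FF_{n})$, so that $\delta(l^{\infty}(\FF_{n}))$ is weak$^{*}$ dense in $l^{\infty}(E(\FF_{n}))$, and concludes by duality. Both work. For the lower bound in the range $1<p\leq 2$ you identify $H^{1}_{l^{p}}(\FF_{n})$ with $\ker(\del)\cap l^{p}(E(\FF_{n}))$ via the Hodge decomposition and invoke Property 5 (plus a density argument using boundedness of the Hodge projection on $l^{p}$); the paper instead applies subadditivity to the exact sequence
\[0 \to l^{p}(\FF_{n})\xrightarrow{\ \delta\ } l^{p}(E(\FF_{n})) \to H^{1}_{l^{p}}(\FF_{n})\to 0,\]
in the form $n=\underline{\dim}_{\Sigma,l^{p}}(l^{p}(E(\FF_{n})))\leq \underline{\dim}_{\Sigma,l^{p}}(H^{1}_{l^{p}})+\dim_{\Sigma,l^{p}}(l^{p}(\FF_{n}))$, using Property 4. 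Your route is valid where the Hodge decomposition is available, i.e.\ for $1<p<\infty$, and it has the mild advantage of not needing the $\underline{\dim}$ form of subadditivity.

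The gap is in your treatment of $\underline{\dim}_{\Sigma,l^{1}}(H^{1}_{l^{1}}(\FF_{n}),\FF_{n})\geq n-1$. Property 1 (in its $\dim$ or $\underline{\dim}$ form) compares two $\Gamma$-Banach modules against a \emph{fixed} approximating sequence $\Sigma=(\sigma_{i}\colon\Gamma\to\Isom(V_{i}))$. From the bounded equivariant map $H^{1}_{l^{1}}(\FF_{n})\to H^{1}_{l^{2}}(\FF_{n})$ with dense image it yields
\[\underline{\dim}_{\Sigma,l^{1}}(H^{1}_{l^{1}}(\FF_{n}),\FF_{n})\ \geq\ \underline{\dim}_{\Sigma,l^{1}}(H^{1}_{l^{2}}(\FF_{n}),\FF_{n}),\]
with \emph{both} sides computed against the permutation action on $l^{1}(d_{i})$. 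It does not yield the cross-norm inequality $\underline{\dim}_{\Sigma,l^{1}}(H^{1}_{l^{1}})\geq\underline{\dim}_{\Sigma,l^{2}}(H^{1}_{l^{2}})$ that your reduction requires, and the quantity $\underline{\dim}_{\Sigma,l^{1}}(H^{1}_{l^{2}}(\FF_{n}),\FF_{n})$ that you do obtain is not one you have computed (nor is it obviously $n-1$; none of the listed properties compares dimensions taken with respect to $l^{1}(d_{i})$ and $l^{2}(d_{i})$ targets). The repair is exactly the paper's argument: the exact-sequence subadditivity in the mixed form $\underline{\dim}_{\Sigma,l^{p}}(V)\leq \dim_{\Sigma,l^{p}}(W)+\underline{\dim}_{\Sigma,l^{p}}(V/W)$ applies verbatim at $p=1$, since the only inputs are that $\delta(l^{1}(\FF_{n}))$ is closed (non-amenability gives $\|\delta f\|_{1}\geq C\|f\|_{1}$) and that $\underline{\dim}_{\Sigma,l^{1}}(l^{1}(E(\FF_{n})))=n$ and $\dim_{\Sigma,l^{1}}(l^{1}(\FF_{n}))=1$ from Property 4. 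You are already running the $\dim$ version of this computation; you should use the $\underline{\dim}$ version rather than the $l^{1}\hookrightarrow l^{2}$ workaround.
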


\begin{proof}
	The statements about higher-dimensional homology or cohomology are clear, since we know that the Cayley graph of $\FF_{n}$ is contractible and one-dimensional.

	Since the image of $\delta$ is closed, the sequence
\[\begin{CD} 0 @>>> l^{p}(\FF_{n})@>\delta>> l^{p}(E(\FF_{n}))  @>>> H^{1}_{l^{p}}(\FF_{n}) @>>> 0\end{CD}\]
is exact. Subadditivity under exact sequences and the computation for $l^{p}$-spaces implies that
\begin{align*}
n
&=\underline{\dim}_{\Sigma,l^{p}}(l^{p}(E(\FF_{n})),\FF_{n})\\
&\leq \underline{\dim}_{\Sigma,l^{p}}(H^{1}_{l^{p}}(\FF_{n}),\FF_{n})+\dim_{\Sigma,l^{p}}(l^{p}(\FF_{n}),\FF_{n})\\
&=\underline{\dim}_{\Sigma,l^{p}}(H^{1}_{l^{p}}(\FF_{n}),\FF_{n})+1.
\end{align*}
Thus
\[\underline{\dim}_{\Sigma,l^{p}}(H^{1}_{l^{p}}(\FF_{n}),\FF_{n})\geq n-1.\]
On the other hand, by  Lemma \ref{L:generateco},  $H^{1}_{l^{p}}(\FF_{n})$ can be generated by $n-1$ elements, so
\[\dim_{\Sigma,l^{p}}(H^{1}_{l^{p}}(\FF_{n}),\FF_{n})\leq n-1,\]
which proves the first claim.

	For the second claim, let $1<p'<\infty$ be such that $\frac{1}{p}+\frac{1}{p'}=1.$  Note that Lemma \ref{L:RIHTDT} implies that $\delta\colon l^{p'}(\FF_{n})\to l^{p'}(E(\FF_{n}))$ is an injection with closed image. Taking transposes, we see that $\delta\colon l^{p}(E(\FF_{n}))\to l^{p}(\FF_{n})$ is surjective. Thus the sequence
\[\begin{CD} 0 @>>> H^{l^{p}}_{1}(\FF_{n})@>>> l^{p}(E(\FF_{n}))  @>\del>> l^{p}(\FF_{n}) @>>> 0\end{CD}\]
is exact. As in the first half this implies that
\[\underline{\dim}_{\Sigma,l^{p}}(H^{l^{p}}_{1}(\FF_{n}),\FF_{n})\geq n-1\]
for $1<p\leq 2.$ The upper bound for $1<p\leq 2$ also holds by the preceding proposition.

	We turn to the last claim. Because the Cayley graph of $\FF_{n}$ is a tree, for $x\in \FF_{n}$ we can define $\gamma_{x}$ to be the unique geodesic path from $e$ to $x.$ Define
\[A\colon \CC^{E(\FF_{n})}\to \CC^{\FF_{n}}\]
 by
\[(Af)(x)=\sum_{j=1}^{|x|}f(\gamma_{x}(j-1),\gamma_{x}(j)).\]
Note that $\delta(Af)=f.$ A direct computation verifies that $A(\E_{(x,xa_{j})})\in l^{\infty}(\FF_{n}),$ so $\delta(l^{\infty}(\FF_{n}))$ is weak$^{*}$ dense in $l^{\infty}(E(\FF_{n})).$ By duality $\ker(\partial)\cap l^{1}(E(\FF_{n}))=\{0\}.$ This completes the proof.

\end{proof}

\section{Closing Remarks}

	Here are some natural conjectures based on our work in this paper and \cite{Me}.

\begin{conj}\label{C:Gor} Let $\Gamma$ be a an amenable group and $W\subseteq l^{p}(\Gamma)^{\oplus n}$ for some $n\in \NN.$ Let $\dim_{l^{p}}^{G}(W,\Gamma)$ be $l^{p}$-dimension as defined by Gournay in \cite{Gor}. Then for any sofic approximation $\Sigma$ of $\Gamma$ we have
\[\dim_{l^{p}}^{G}(W,\Gamma)=\dim_{\Sigma,l^{p}}(W,\Gamma).\]\end{conj}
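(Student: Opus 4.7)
The plan is to establish the equality by setting up a correspondence between almost equivariant maps into a sofic approximation and coordinate restrictions to F\o lner sets, exploiting that for amenable groups sofic approximations have a rigid tiling structure. Both inequalities should follow once this correspondence is made quantitative.

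First, I would fix notation for Gournay's dimension. For a F\o lner sequence $(F_{i})$ of $\Gamma$ and $Y\subseteq l^{p}(\Gamma)^{\oplus n}$, write $\pi_{F_{i}}$ for the coordinate restriction $l^{p}(\Gamma)^{\oplus n}\to l^{p}(F_{i})^{\oplus n}$, so that Gournay's dimension takes the form
\[
\dim_{l^{p}}^{G}(Y,\Gamma)=\sup_{\varepsilon>0}\limsup_{i\to\infty}\frac{1}{|F_{i}|}d_{\varepsilon}\bigl(\pi_{F_{i}}(Y\cap \Ball(l^{p}(\Gamma)^{\oplus n})),\|\cdot\|_{p}\bigr),
\]
which by an Ornstein--Weiss argument is independent of the chosen F\o lner sequence. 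Because both sides are already known to be independent of the choice of dynamically generating sequence, I may choose $S$ to consist of the coordinate unit vectors of $l^{p}(\Gamma)^{\oplus n}$ truncated to a finite subset of $\Gamma$.

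Second, I would invoke the tiling structure of sofic approximations of amenable groups (Elek--Szab\'o, refined by Downarowicz--Huczek--Zhang, or Kerr--Li's quasi-tilings). For any sofic approximation $\Sigma=(\sigma_{i}\colon\Gamma\to S_{d_{i}})$ and any prescribed F\o lner set $F$ with small enough boundary, one can almost-tile $\{1,\dots,d_{i}\}$ by $\sigma_{i}$-translates of $F$: there exist $T_{i}\subseteq \{1,\dots,d_{i}\}$ with $\{\sigma_{i}(f)t:f\in F,t\in T_{i}\}$ disjoint and covering all but $o(d_{i})$ points, and on which $\sigma_{i}$ acts as the regular $\Gamma$-action on $F$. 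This is exactly the averaging structure used in Proposition~3 and Corollary~4 of Section~\ref{S:trivial}, and I would import that framework.

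Third, for the upper bound $\dim_{\Sigma,l^{p}}(Y,\Gamma)\leq \dim_{l^{p}}^{G}(Y,\Gamma)$, I would take $T\in \Hom_{\Gamma,p}(S,F',m,\delta,\sigma_{i})$ and average it against the tiling, as in Proposition~3, to obtain a map which, restricted to a typical tile $\sigma_{i}(F)t$, is $O(\delta)$-close (in $l^{p}(\sigma_{i}(F)t)$) to a genuine coordinate restriction $\pi_{F}(y)$ for some $y\in Y$ of norm $\leq 1$. The span of such maps then has $\varepsilon$-dimension bounded (per tile) by $d_{\varepsilon}(\pi_{F}(Y_{\Ball}),\|\cdot\|_{p})$, and summing over the $\approx d_{i}/|F|$ tiles and dividing by $d_{i}$ gives the desired inequality after sending the approximation parameters to their limits.

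Fourth, for the lower bound, I would reverse the construction: given $y_{1},\dots,y_{k}\in Y$ whose $F$-restrictions are $\varepsilon$-separated in $l^{p}(F)^{\oplus n}$, build almost equivariant maps $T\colon Y_{F',m}\to l^{p}(d_{i})$ by placing translates of $\pi_{F}(y_{j})$ on each tile $\sigma_{i}(F)t$. Almost equivariance in the sense of Definition~\ref{D:Hom} follows because on each tile $\sigma_{i}$ is a genuine group action, and the error is controlled by the boundary $|\partial_{F'}F|/|F|$ and by $|B_{i}|/d_{i}$, both of which are negligible for $F$ sufficiently F\o lner. A dimension count then shows that $\alpha_{S}$ of these $T$'s has $\varepsilon$-dimension at least $(d_{i}/|F|)\cdot d_{\varepsilon}(\pi_{F}(Y_{\Ball}))+o(d_{i})$, yielding the matching lower bound.

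The main obstacle I anticipate is the \emph{packing/separation translation}: turning the $\varepsilon$-dimension of $\pi_{F}(Y)$ into a usable lower bound on the $\varepsilon$-dimension of the image of $\alpha_{S}$. The tiling construction naturally produces many maps, but their images under $\alpha_{S}$ must be checked to remain separated (in a product norm on $l^{\infty}(\NN,l^{p}(d_{i}))$), and the averaging step in the upper bound direction can blur fine separation and cost a factor depending on $p$. Handling this for general $1\leq p\leq \infty$ (rather than just $p=2$, where Hilbertian averaging is harmless) will require a careful comparison of $\varepsilon$-covers before and after averaging, presumably using a Riesz-type projection argument analogous to the one in Proposition~\ref{P:finitegroup}.
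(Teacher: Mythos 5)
The statement you are trying to prove is left \emph{open} in the paper: it appears only as Conjecture \ref{C:Gor} in the closing remarks, and the author explicitly records that ``little progress has been made'' on it. So there is no proof to compare yours against, and your proposal must stand on its own. It does not: there is a genuine gap at the core of both inequalities. Gournay's dimension measures the $\varepsilon$-dimension of the set of \emph{vectors} $\pi_{F}(Y\cap \Ball(l^{p}(\Gamma)^{\oplus n}))$, whereas $\dim_{\Sigma,l^{p}}(Y,\Gamma)$ measures the $\varepsilon$-dimension of the set of \emph{values at the generators of norm-one almost equivariant operators out of} $Y$. These are different objects, and identifying their normalized $\varepsilon$-dimensions is essentially the entire content of the conjecture, not a routine consequence of tiling. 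Concretely: in your step~3, an averaged map restricted to a tile is close to a genuinely equivariant operator $Y_{F',m}\to l^{p}(F)$, but there is no reason its tuple of values $(T(x_{j})|_{\mathrm{tile}})_{j}$ should be of the form $\pi_{F}(y)$ for a single $y\in Y\cap\Ball$ --- the index sets do not even match, and equivariant norm-one operators out of $Y$ (convolution-type operators) do not take values in $Y$. In your step~4 the reverse failure occurs: placing tiled copies of $\pi_{F}(y)$ for $y\in\Ball(Y)$ does not produce an element of $\Hom_{\Gamma}(S,F',m,\delta,\sigma_{i})$ or of $\Vect_{\Gamma}(S,F',m,\delta,\sigma_{i})$; already for $Y=l^{p}(\Gamma)$ with generator $\delta_{e}$, the map $\delta_{e}\mapsto(\text{tiled }\pi_{F}(y))$ has norm roughly $\|y\|_{1}$ as an operator on $l^{p}$, not $\leq 1$, so a generic $y\in\Ball(l^{p}(\Gamma))$ yields no admissible microstate. (For $p=2$ self-duality hides this discrepancy, which is precisely why the $p=2$ case is a theorem and the general case is not.)

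Two further problems, secondary but real. First, you may not take $S$ to be the coordinate unit vectors of $l^{p}(\Gamma)^{\oplus n}$: a dynamically generating sequence for $Y$ must satisfy $\overline{\Span(\Gamma S)}=Y$, and those vectors generate all of $l^{p}(\Gamma)^{\oplus n}$ and need not even lie in $Y$; the invariance of $\dim_{\Sigma}$ under change of generating sequence only applies to sequences generating the \emph{same} subspace. Second, a sofic approximation of an amenable group is not exactly tiled by translates of a single F\o lner set; one needs quasi-tilings by several shapes (Ornstein--Weiss, or exact tilings by finitely many shapes \`a la Downarowicz--Huczek--Zhang), and the bookkeeping of the $\varepsilon$-dimension across tiles of different shapes, together with the loss incurred by averaging in $l^{p}$ for $p\neq 2$ that you yourself flag at the end, is not addressed. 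Until you exhibit an actual mechanism converting vectors of $Y$ into norm-one almost equivariant operators and back with controlled loss of $\varepsilon$-dimension, the argument does not get off the ground.
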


\begin{conj}\label{C:dim} Let $2<p<\infty,$ and let $\Gamma$ be a countable discrete sofic group with sofic approximation $\Sigma.$ Then for all $n\in \NN,$
\[\dim_{\Sigma,l^{p}}(l^{p}(\Gamma)^{\oplus n},\Gamma)=\underline{\dim}_{\Sigma,l^{p}}(l^{p}(\Gamma)^{\oplus n},\Gamma)=n.\]
\end{conj}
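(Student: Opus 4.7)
The plan is to prove both $\dim_{\Sigma,l^p}(l^p(\Gamma)^{\oplus n},\Gamma) \le n$ and $\underline{\dim}_{\Sigma,l^p}(l^p(\Gamma)^{\oplus n},\Gamma) \ge n$, yielding the equality (with the common value $n$). The upper bound is routine: by subadditivity (Property 2) it suffices to show $\dim_{\Sigma,l^p}(l^p(\Gamma),\Gamma) \le 1$, and for this one uses the single-generator sequence $S = (\delta_e)$; the map $T \mapsto T(\delta_e)$ sends $\Hom_\Gamma(S,F,m,\delta,\sigma_i)$ into the unit ball of $l^p(d_i)$, whose $d_\varepsilon$ is at most $d_i$.

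For the lower bound, I would use the vector formulation of Section 4. Take generators $x_l = e_l$ (the basis vector $\delta_e$ in the $l$-th copy) for $l \le n$, and $x_l = 0$ otherwise. Given $F \subseteq \Gamma$ finite and $m \in \NN$, let $B = \{g_1\cdots g_m : g_j \in F\}$. Soficity produces $A_i \subseteq \{1,\ldots,d_i\}$ with $|A_i|/d_i \to 1$ such that $\sigma_i(g_1)\cdots\sigma_i(g_m)(k) = \sigma_i(g_1\cdots g_m)(k)$ for $k \in A_i$ and $g_j \in F$, and the map $(s,k) \mapsto \sigma_i(s)(k)$ is injective on $B \times A_i$. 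For each $k \in A_i$ and $l \in \{1,\ldots,n\}$, let $\eta_{k,l}$ be the $m$-tuple whose $l$-th entry is $\delta_k$ inside the $l$-th copy of $l^p(d_i)^{\oplus n}$ and whose other entries vanish; let $E$ denote their span, a subspace of dimension $n|A_i|$ with norm $\|\xi\|_E = \max_l \|\xi_l\|_p$.

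The crucial computation is that the unit ball $K$ of $(E,\|\cdot\|_E)$ is contained in $\Vect_\Gamma(S,F,m,\delta,\sigma_i)$ up to $O(\delta)$ sofic error. For $\xi = \sum c_{k,l}\eta_{k,l}$ with $\sum_k |c_{k,l}|^p \le 1$ for each $l$, and any $\lambda_{g,l}$ with $\sum|\lambda_{g,l}| \le 1$, injectivity on $B\times A_i$ together with $B$-multiplicativity reduces the $p$-th power of the left side of the Vect inequality to
\[\sum_l \Big(\sum_k |c_{k,l}|^p\Big)\Big(\sum_{s\in B}|\mu_{s,l}|^p\Big), \qquad \mu_{s,l} := \sum_{g_1\cdots g_m=s}\lambda_{g,l},\]
while the right-hand side is $\sum_l \sum_s |\mu_{s,l}|^p$. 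The ratio is bounded by $\max_l \|\xi_l\|_p^p \le 1$, so $K \subseteq \Vect$. A Riesz-lemma argument then applies: using the norm-one projection of $l^\infty(\NN,l^p(d_i)^{\oplus n})$ onto $E$ (truncate to the first $n$ positions, then project each $l^p(d_i)^{\oplus n}$ onto $l^p(A_i)$ in its $l$-th coordinate), any subspace $W$ that $\varepsilon$-contains $\alpha_S(K)$ must satisfy $\dim W \ge n|A_i|$ for $\varepsilon$ small relative to $\rho(\chi_{\{1,\ldots,n\}})$. Taking the double $\liminf$ yields $\underline{\dim}_{\Sigma,l^p}(l^p(\Gamma)^{\oplus n},\Gamma) \ge n$.

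The main obstacle is the uniform bookkeeping of sofic errors. The injectivity set $A_i$ depends on $F$ and $m$, and the failure of multiplicativity of $\sigma_i$ introduces small additive errors in the Vect estimate; these must be absorbed into the $\delta$-slack as $(F,m,\delta)$ varies in the $\liminf$. A subsidiary check is the Riesz-lemma step in the product-norm setting: the constructed projection must be verified to be norm-one with respect to $\rho_V$, which reduces to monotonicity of $\rho$ and the fact that restriction to $A_i$ in each $l^p(d_i)^{\oplus n}$ factor is a norm-one map.
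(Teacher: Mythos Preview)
This statement is a \emph{conjecture} in the paper, not a theorem; immediately after stating it the author writes that ``little progress has been made'' on it and that the definition may simply not be the right one for $2<p<\infty$. So there is no proof in the paper to compare against, and any purported proof must be checked on its own merits.

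Your argument contains a genuine error in the lower bound. The claimed identity
\[
\Bigl\|\sum_{l,s}\mu_{s,l}\,\sigma_i(s)\xi_l\Bigr\|_p^p \;=\; \sum_{l}\Bigl(\sum_k |c_{k,l}|^p\Bigr)\Bigl(\sum_s|\mu_{s,l}|^p\Bigr)
\]
is false: injectivity of $(s,k)\mapsto\sigma_i(s)(k)$ on $B\times A_i$ gives
\[
\Bigl\|\sum_{l,s}\mu_{s,l}\,\sigma_i(s)\xi_l\Bigr\|_p^p \;=\; \sum_{s\in B}\sum_{k\in A_i}\Bigl|\sum_{l=1}^{n}\mu_{s,l}\,c_{k,l}\Bigr|^p,
\]
and the inner sum over $l$ does not factor. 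Concretely, take $n=2$, pick $k_0\in A_i$ and set $\xi_1=\xi_2=\delta_{k_0}$ (so each $\|\xi_l\|_p=1$, hence $(\xi_1,\xi_2)$ lies in your unit ball $K$). Choose coefficients supported on a single word $s_0\in B$ with $\mu_{s_0,1}=\mu_{s_0,2}=\tfrac12$. The left side of the $\Vect$ inequality is
\[
\bigl\|\tfrac12\sigma_i(s_0)\delta_{k_0}+\tfrac12\sigma_i(s_0)\delta_{k_0}\bigr\|_p = 1,
\]
while the right side is
\[
\delta + \bigl((\tfrac12)^p+(\tfrac12)^p\bigr)^{1/p} = \delta + 2^{\,1/p-1}.
\]
For every $p>1$ one has $2^{\,1/p-1}<1$, so once $\delta<1-2^{\,1/p-1}$ the inequality fails and $(\xi_1,\xi_2)\notin \Vect_\Gamma(S,F,m,\delta,\sigma_i)$. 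Thus $K\not\subseteq\Vect$, and the Riesz--lemma step never gets off the ground.

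The underlying obstruction is that the $\Vect$ (equivalently $\Hom$) condition for the $n$-fold sum asks that the matrix $C=(c_{k,l})$ have operator norm at most $1$ as a map $l^p(n)\to l^p(A_i)$, whereas your hypothesis $\max_l\|\xi_l\|_p\le 1$ only bounds the $l^p$-norms of the \emph{columns} of $C$; for $p\neq 1$ these are not the same. This is precisely why the paper's known result (Property~4) is restricted to $1\le p\le 2$: there the lower bound is obtained not by a direct volume/Riesz argument but via the comparison $\underline{\dim}_{\Sigma,l^p}(Y,\Gamma)\ge \dim_{L(\Gamma)}(\overline{Y}^{\|\cdot\|_2})$ (Property~5), which uses the inclusion $l^p\hookrightarrow l^2$ and hence breaks down for $p>2$.
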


 Little progress has been made on Conjectures \ref{C:Gor},\ref{C:dim}. It may be quite possible that our definition is simply not the right way to look at von Neumann dimension for the action of $\Gamma$ on $l^{p}(\Gamma)^{\oplus n}$ if $2< p<\infty$. Another natural conjecture based on the techniques in Section \ref{S:trivial} is the following.

\begin{conj} Let $\Gamma$ be an amenable group. If $\Sigma,\Sigma'$ are two sofic approximations of $\Gamma$ and $V$ is a uniformly bounded representation of $\Gamma,$ then
\[\dim_{\Sigma,l^{p}}(V,\Gamma)=\dim_{\Sigma',l^{p}}(V,\Gamma).\]
\end{conj}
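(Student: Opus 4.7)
The plan is to reduce the general conjecture to the special case already handled by Corollary~2.5 (same dimension sequence) via amplification. Given a sofic approximation $\Sigma = (\sigma_i \colon \Gamma \to S_{d_i})$ and positive integers $N_i$, define the $N_i$-fold amplification $\Sigma^{(N_i)} = (\sigma_i^{(N_i)} \colon \Gamma \to S_{N_i d_i})$ by letting $\sigma_i^{(N_i)}(g)$ act as $\sigma_i(g)$ on each of $N_i$ disjoint copies of $\{1,\dots,d_i\}$; the Hamming separation and approximate multiplicativity conditions are preserved, so $\Sigma^{(N_i)}$ remains a sofic approximation. Given two sofic approximations $\Sigma, \Sigma'$ of $\Gamma$ with dimensions $d_i, d_i'$, the cross-amplifications $\Sigma^{(d_i')}$ and $\Sigma'^{(d_i)}$ are both sofic approximations on $d_i d_i'$ points, hence by Corollary~2.5 have equal $l^p$-dimension. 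Thus the whole conjecture reduces to the amplification invariance
\[
\dim_{\Sigma^{(N_i)}, l^p}(X, \Gamma) = \dim_{\Sigma, l^p}(X, \Gamma),
\]
after which chaining gives $\dim_{\Sigma, l^p} = \dim_{\Sigma^{(d_i')}, l^p} = \dim_{\Sigma'^{(d_i)}, l^p} = \dim_{\Sigma', l^p}$.

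For amplification invariance, decompose $T \in \Hom_\Gamma(S,F,m,\delta,\sigma_i^{(N_i)})$ via its block coordinates as $T = (T_1, \dots, T_{N_i})$, with each $T_k \in \Hom_\Gamma(S,F,m,\delta,\sigma_i)$ and $\sum_k \|T_k x\|_p^p \le \|x\|^p$. The upper bound $\dim_{\Sigma^{(N_i)}, l^p} \le \dim_{\Sigma, l^p}$ comes from covering each $\alpha_S(T_k)$ within $\varepsilon/N_i$ by a subspace of $\alpha_S(\Hom_\Gamma(\sigma_i))$ and reassembling; the product-norm inequality $(\sum h_k^p)^{1/p} \le \sum_k h_k$ (valid for $p \ge 1$) yields $d_\varepsilon(\alpha_S(\Hom_\Gamma(\sigma_i^{(N_i)}))) \le N_i \cdot d_{\varepsilon/N_i}(\alpha_S(\Hom_\Gamma(\sigma_i)))$. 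Dividing by $N_i d_i$ and taking $\sup_\varepsilon$ after $\limsup_i$ gives the inequality, using that $d_{\varepsilon'}(\alpha_S(\Hom_\Gamma(\sigma_i))) / d_i$ is bounded uniformly in $\varepsilon'$ for $\alpha_S$ supported on $m$ positions. For the lower bound $\dim_{\Sigma^{(N_i)}, l^p} \ge \dim_{\Sigma, l^p}$, embed $N_i$ disjoint copies of $\Hom_\Gamma(\sigma_i)$ into $\Hom_\Gamma(\sigma_i^{(N_i)})$ by placing $T$ in the $k$-th block coordinate (zero elsewhere); the images live in orthogonal subspaces of $l^p(N_i d_i)$, and the norm-one block-coordinate projections force any $\varepsilon$-covering subspace to have dimension at least $N_i \cdot d_{\varepsilon'}(\alpha_S(\Hom_\Gamma(\sigma_i)))$ for some $\varepsilon' \ge \varepsilon$. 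Amenability enters here via Proposition~2.3: by restricting to maps equivariant only on $A_i \subseteq \{1,\dots,d_i\}$ with $|A_i|/d_i \to 1$, the block decomposition becomes clean and one can arrange $A_i^{(N_i)}$ to be the disjoint union of $N_i$ copies of $A_i$.

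The main obstacle is the lower bound in amplification invariance. In Hilbert space ($p=2$) the orthogonal decomposition immediately yields additivity of $\varepsilon$-dimension over block-orthogonal copies, but in general $l^p$ additivity can fail for arbitrary sets (e.g.\ $A = \{v\}$ gives $d_\varepsilon(A\times A)= 1 \ne 2 d_\varepsilon(A)$), so one must exploit the richness of $\alpha_S(\Hom_\Gamma(\sigma_i))$ to force independence between the block copies. The averaging tool of Proposition~2.3, together with a Rokhlin-type tiling of $\{1,\dots,N_i d_i\}$ by approximate Følner blocks, should supply enough decoupling. A promising alternative is to show directly that for amenable $\Gamma$, $\dim_{\Sigma, l^p}(X, \Gamma)$ coincides with an intrinsic Gornay-type invariant computed from Følner sequences of $\Gamma$ alone, which would obviate the need for any comparison between different $\Sigma$ and $\Sigma'$ and settle Conjecture~5.1 simultaneously.
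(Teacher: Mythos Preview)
The statement you are attempting is stated in the paper as an open conjecture, not a theorem; the paper offers no proof. Immediately after stating it the paper records precisely the reduction you carry out in your first paragraph: by the corollary to Proposition~2.3 (conjugacy of sofic embeddings into the same permutation groups for amenable $\Gamma$), one may assume $\Sigma'$ is an amplification $\Sigma^{\oplus k_i}$ of $\Sigma$. So your reduction is correct and matches exactly what the paper already isolates as the remaining obstruction.

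The gap is that your proof of amplification invariance is incomplete in both directions, and the upper-bound half is actually broken as written. From the block decomposition $T=(T_1,\dots,T_{N_i})$ and a subspace $W$ that $\varepsilon$-contains $\alpha_S(\Hom_\Gamma(\sigma_i))$, you only obtain that the $N_i$-fold block sum $W^{N_i}$ $(N_i\varepsilon)$-contains $\alpha_S(\Hom_\Gamma(\sigma_i^{(N_i)}))$, yielding
\[
\frac{d_{\varepsilon'}\bigl(\alpha_S(\Hom_\Gamma(\sigma_i^{(N_i)}))\bigr)}{N_i d_i}\;\le\;\frac{d_{\varepsilon'/N_i}\bigl(\alpha_S(\Hom_\Gamma(\sigma_i))\bigr)}{d_i}.
\]
But in the reduction $N_i=d_i'\to\infty$, so for fixed $\varepsilon'$ the scale $\varepsilon'/N_i\to 0$ as $i\to\infty$, and the right-hand side need not converge to $\dim_{\Sigma,l^p}(X,\Gamma)$; the ``uniform boundedness by $m$'' you invoke gives only the trivial bound $m$. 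The product-norm inequality you cite goes the wrong way to repair this. For the lower bound you already concede that block-orthogonal copies need not add $\varepsilon$-dimension in $l^p$ for $p\ne 2$, and the appeal to a F{\o}lner/Rokhlin decoupling is a hope, not an argument. In short, your proposal rediscovers the paper's reduction but does not advance beyond it; the amplification-invariance step remains genuinely open, and your ``promising alternative'' of identifying $\dim_{\Sigma,l^p}$ with a Gornay-type invariant is exactly Conjecture~1 of the paper, also open.
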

Because of the techniques in  Section $\ref{S:trivial},$ if $\Sigma=(\sigma_{i}\colon\Gamma\to S_{d_{i}}),$ it suffices to assume $\Sigma=(\sigma^{\oplus k_{i}}),$ for a sequence of integers $k_{i}.$

$\mathbf{Acknowledgment}.$ The author would like to thank Dimitri Shlyakhtenko for his helpful advice on the problem. The author would like to thank the anonymous referee whose numerous comments greatly improved the paper.

\end{document}